\title{A general framework of Riemannian adaptive optimization methods
with a convergence analysis}
\author{\name Hiroyuki Sakai \email sakai0815@cs.meiji.ac.jp \\
      \addr Meiji University \\
      \AND
      \name Hideaki Iiduka \email iiduka@cs.meiji.ac.jp \\
      \addr Meiji University}
\newtheorem{theorem}{Theorem}[section]
\newtheorem{lemma}[theorem]{Lemma}
\newtheorem{proposition}[theorem]{Proposition}
\newtheorem{definition}[theorem]{Definition}
\newtheorem{assumption}[theorem]{Assumption}
\DeclareMathOperator*{\grad}{grad}
\DeclareMathOperator*{\diag}{diag}
\DeclareMathOperator*{\tr}{tr}
\DeclareMathOperator*{\id}{id}
\DeclareMathOperator*{\St}{St}
\DeclareMathOperator*{\Gr}{Gr}
\DeclareMathOperator*{\qf}{qf}
\DeclareMathOperator*{\sym}{sym}
\DeclareMathOperator*{\argmin}{arg\,min}
\newcommand{\real}{\mathbb{R}}
\newcommand{\ex}{\mathbb{E}}
\newcommand{\abs}[1]{\left|{#1}\right|}
\newcommand{\norm}[1]{\left\lVert{#1}\right\rVert}
\newcommand{\ip}[2]{\left\langle{#1,#2}\right\rangle}
\newcommand{\bs}{\boldsymbol{s}}
\begin{document}
\maketitle

\begin{abstract}
This paper proposes a general framework of
Riemannian adaptive optimization methods.
The framework encapsulates several stochastic optimization algorithms
on Riemannian manifolds and
incorporates the mini-batch
strategy that is often used in deep learning. 
Within this framework, we also propose AMSGrad
on embedded submanifolds of Euclidean space.
Moreover, we give convergence analyses valid for both
a constant and a diminishing step size.
Our analyses also reveal the relationship between the convergence rate
and mini-batch size.
In numerical experiments,
we applied the proposed algorithm to principal
component analysis and the low-rank matrix completion problem,
which can be considered to be Riemannian optimization problems.
Python implementations of the methods used
in the numerical experiments are available at
\url{https://github.com/iiduka-researches/202408-adaptive}.
\end{abstract}

\section{Introduction}
Riemannian optimization \citep{absil2008optimization, sato2021riemannian}
has received much attention in machine learning.
For example, batch normalization \citep{cho2017riemannian},
representation learning \citep{nickel2017poincare}, and
the low-rank matrix completion problem
\citep{vandereycken2013low, cambier2016robust, boumal2015low}
can be considered optimization problems on Riemannian manifolds.
This paper focuses on Riemannian adaptive optimization
algorithms for solving stochastic optimization problems on
Riemannian manifolds.
In particular, we treat Riemannian submanifolds of Euclidean space
(e.g., unit spheres and the Stiefel manifold).

In Euclidean settings, adaptive optimization methods
are widely used for training deep neural networks.
There are many adaptive optimization methods, such as
Adaptive gradient (AdaGrad) \citep{duchi2011adaptive},
Adadelta \citep{zeiler2012adadelta},
Root mean square propagation (RMSProp) \citep{hinton2012neural},
Adaptive moment estimation (Adam) \citep{kingma2015adam},
Yogi \citep{zaheer2018adaptive},
Adaptive mean square gradient (AMSGrad) \citep{reddi2018convergence},
AdaFom \citep{chen2019convergence},
AdaBound \citep{Luo2019AdaBound},
Adam with decoupled weight decay (AdamW)
\citep{loshchilov2019decoupled} and
AdaBelief \citep{zhuang2020adabelief}.
\citet{reddi2018convergence}
proposed a general framework of adaptive optimization methods
that encapsulates many of the popular adaptive methods
in Euclidean space.

\citet{bonnabel2013stochastic}
proposed Riemannian stochastic gradient descent
(RSGD), the most basic Riemannian stochastic optimization
algorithm.
In particular,
Riemannian stochastic variance reduction algorithms, such as
Riemannian stochastic variance-reduced gradient (RSVRG) 
\citep{zhang2016riemannian},
Riemannian stochastic recursive gradient (RSRG)
\citep{kasai2018riemannian}, and
Riemannian stochastic path-integrated differential estimator
(R-SPIDER) \citep{zhang2018r, zhou2019faster},
are based on variance reduction methods in Euclidean space.
There are several prior studies on Riemannian adaptive optimization
methods for specific Riemannian manifolds.
In particular, \citet{kasai2019riemannian} proposed a
Riemannian adaptive stochastic gradient algorithm
on matrix manifolds (RASA). RASA is an adaptive
optimization method on matrix manifolds
(e.g., the Stiefel manifold or the Grassmann manifold),
and gave a convergence analysis under the upper-Hessian bounded
and retraction $L$-smooth assumptions
(see \citep[Section 4]{kasai2019riemannian} for details).
However, RASA is not a direct extension of
the adaptive optimization methods commonly used in deep learning,
and it works only for diminishing step sizes.
RAMSGrad \citep{becigneul2019riemannian} and modified RAMSGrad \citep{sakai2021riemannian},
direct extensions of AMSGrad, have been proposed
as methods that work on Cartesian products of Riemannian manifolds.
In particular, \citet{roy2018geometry}
proposed cRAMSProp and applied it
to several Riemannian stochastic optimizations.
However, they did not provide a convergence analysis of it.
More recently, Riemannian stochastic
optimization methods,
sharpness-aware minimization on Riemannian manifolds
(Riemannian SAM) \citep{yun2024riemannian} and
Riemannian natural gradient descent (RNGD) \citep{hu2024riemannian},
were proposed.

\subsection{Motivations}
\citet{kasai2019riemannian} presented useful results indicating RASA with a diminishing step size converges to a stationary point of a continuously differentiable function defined on a matrix manifold. Meanwhile, other results have indicated that using constant step sizes makes the algorithm useful for training deep neural networks \citep{zaheer2018adaptive} and that the setting of the mini-batch size depends on the performance of the algorithms used to train deep neural networks \citep{smith2018dont, sato2023existence}. The main motivation of this paper is thus to gain an understanding of the theoretical performance relationship between Riemannian adaptive optimization methods using constant step sizes and ones using the mini-batch size. We are also interested in understanding the theoretical performance relationship between the Riemannian adaptive optimization methods using diminishing step sizes and those using the mini-batch size.

\subsection{Contributions}
Our first contribution is to propose a framework of
adaptive optimization methods on Riemannian submanifolds of
Euclidean space (Algorithm \ref{alg:general}) that is
based on the framework \citep[Algorithm 1]{reddi2018convergence}
proposed by Reddi, Kale and Kumar for Euclidean space.
A key challenge in designing adaptive gradient methods over Riemannian manifolds is the absence of a coordinate system like that in Euclidean space. In this work, we specifically focus on manifolds that are embedded in Euclidean space, a common setting in many applications. We address the issue by constructing algorithms that rely on projections onto the tangent space at each point on the manifold.
Our framework incorporates the mini-batch strategy that is often used
in deep learning.
Important examples of Riemannian submanifolds of
the Euclidean space include the unit sphere
and the Stiefel manifold.
Note that \citep{kasai2019riemannian} focuses exclusively on the RASA algorithm, which is defined only on matrix manifolds. In contrast, our framework can be defined on any manifold embedded in Euclidean space.
Moreover, within this framework, we propose AMSGrad
on embedded submanifolds of Euclidean space
(Algorithm \ref{alg:AMSGrad}) as a direct extension of AMSGrad.

Our second contribution is to give convergence analyses of the proposed algorithm (Algorithm \ref{alg:general}) by using a mini-batch size $b$ valid for both a constant step size (Theorem \ref{thm:constant}) and diminishing step size (Theorem \ref{thm:diminishing}). In particular, Theorem \ref{thm:constant} indicates that, under some conditions, the sequence $(x_k)_{k=1}^{\infty}$ generated by the proposed algorithm with a constant step size $\alpha$ satisfies
\begin{align*}
\min_{k=1,\ldots, K} \ex \left[\norm{\grad f(x_k)}_2^2\right] \leq \frac{1}{K} \sum_{k=1}^K \ex \left[\norm{\grad f(x_k)}_2^2\right] = \mathcal{O} \left(\frac{1}{K} + \frac{1}{b}  \right). 
\end{align*}
We should note that Theorem \ref{thm:constant} is an extension of the result in \citet[Corollary 2]{zaheer2018adaptive}.
Theorem \ref{thm:diminishing} indicates that, under some conditions, the sequence $(x_k)_{k=1}^{\infty}$ generated by the proposed algorithm with a diminishing step size $\alpha_k$ satisfies
\begin{align*}
\min_{k=1,\ldots, K} \ex\left[\norm{\grad f(x_k)}_2^2\right] \leq \frac{1}{K} \sum_{k=1}^K \ex \left[\norm{\grad f(x_k)}_2^2\right] = \mathcal{O} \left( \left(1 + \frac{1}{b}  \right) \frac{\log K}{\sqrt{K}} \right). 
\end{align*} 
Since the stochastic gradient (see \eqref{eq:bgrad}) using large mini-batch sizes is approximately the full gradient of $f$; intuitively, using large mini-batch sizes decreases the number of steps needed to find stationary points of $f$. Theorems \ref{thm:constant} and \ref{thm:diminishing} indicate that, the larger the mini-batch size $b$ is, the smaller the upper bound of $\frac{1}{K} \sum_{k=1}^K \mathbb{E} [ \|  \mathrm{grad} f(x_k)  \|_2^2]$ becomes and the fewer the required steps become. Hence, Theorems \ref{thm:constant} and \ref{thm:diminishing} would match our intuition.

Theorem \ref{thm:constant} indicates that, although the proposed algorithm with a constant step size $\alpha$ and constant mini-batch size $b$ does not always converge, we can expect that converges with an increasing mini-batch size $b_k$ such that $b_k \leq b_{k+1}$.
In practice, it has been shown that increasing the mini-batch size \citep{richard2012sample,balles2016coupling,automated2017soham,smith2018dont,goyal2018accurate} is useful for training deep neural networks with mini-batch optimizers.
Motivated by Theorems \ref{thm:constant} and \ref{thm:diminishing}, we give convergence analyses of the proposed algorithm by using an increasing mini-batch size $b_k$ valid for both a constant step size (Theorem \ref{thm:constant_1}) and diminishing step size (Theorem \ref{thm:diminishing_1}). 
Theorem \ref{thm:constant_1} indicates that, under some conditions, the sequence $(x_k)_{k=1}^{\infty}$ generated by the proposed algorithm with a constant step size $\alpha$ and an increasing mini-batch size $b_k$ satisfies 
\begin{align*}
    \min_{k=1,\ldots, K} \ex\left[\norm{\grad f(x_k)}_2^2\right] \leq \frac{1}{K}\sum_{k=1}^K\ex\left[\norm{\grad f(x_k)}_2^2\right]
    =\mathcal{O}\left(\frac{1}{K}\right).
\end{align*}
That is, with increasing mini-batch sizes, it can find a stationary point of $f$, in contrast to Theorem \ref{thm:constant}.
Moreover, Theorem \ref{thm:diminishing_1} indicates that, under some conditions, the sequence $(x_k)_{k=1}^{\infty}$ generated by the proposed algorithm with a diminishing step size $\alpha_k$ and an increasing mini-batch size $b_k$ satisfies  
\begin{align*}
    \min_{k=1,\ldots, K} \ex\left[\norm{\grad f(x_k)}_2^2\right] \leq \frac{1}{\sum_{k=1}^K \alpha_k}\sum_{k=1}^K \alpha_k \ex\left[\norm{\grad f(x_k)}_2^2\right]
    =\mathcal{O}\left(
    \frac{1}{\sqrt{K}}\right).
\end{align*}
That is, with increasing mini-batch sizes, it has a better convergence rate than with constant mini-batch sizes (Theorem \ref{thm:diminishing}).

The third contribution is to numerically compare
the performances of several methods based on
Algorithm \ref{alg:general}, including Algorithm \ref{alg:AMSGrad},
with the existing methods.
In the numerical experiments, we applied the algorithms
to principal component analysis (PCA)
\citep{kasai2018riemannian, roy2018geometry}
and the low-rank matrix completion (LRMC) problem
\citep{boumal2015low, kasai2019riemannian, hu2024riemannian},
which can be considered to be Riemannian optimization problems.
Based on our experiments, we can summarize the following key findings. For the PCA problem, RAMSGrad with a constant step size minimizes the objective function in fewer or comparable iterations compared with RSGD, RASA-L, RASA-R and RASA-LR, regardless of the dataset.
For the LRMC problem, RAdam with a diminishing step size minimizes the objective function in fewer or about the same number of iterations compared with RSGD, RASA-L and RASA-R, irrespective of the dataset.
As reported in \citet{sakai2021riemannian}, RAMSGrad demonstrates robust performance regardless of the initial step size. For the
PCA and LRMC problems (where LRMC adopts a diminishing step size), a grid search was conducted, allowing the selection of an initial step size that yielded even better results. This likely contributed to the strong performance of RAMSGrad across datasets.
However, for the LRMC problem, RAMSGrad may underperform relative to RASA, depending on the dataset.
It is worth noting that the objective function in the LRMC problem is complex, and assumptions such as retraction $L$-smoothness may not hold in this case. This could explain the less favorable results observed for RAMSGrad in this particular setting.
Moreover, to support our theoretical analyses, we show that increasing the batch size decreases the number of steps needed to a certain index. In particular, Table \ref{tab:mnist-b-c} (resp. Table \ref{tab:mnist-b-d}) supports the result shown in Theorem \ref{thm:constant}, demonstrating that the proposed algorithms with a constant (resp. diminishing) step size require fewer iterations to reduce the gradient norm below the threshold as the batch size increases. Tables \ref{tab:mnist-b-c} and \ref{tab:mnist-b-d} confirm that larger batch sizes lead to faster convergence in terms of the number of iterations, aligning with our theoretical analysis. Similar tables (Tables \ref{tab:coil100-b-c}--\ref{tab:jester-b-d}) for the remaining datasets and experiments can be found in Appendix \ref{apx:batch-comparisons}.
Note that the choice of batch size plays a critical role in balancing the computational cost per iteration and the total number of iterations required for convergence.
This is related to the concept of the critical batch size, as discussed in \citet{sakai2024convergence}, which minimizes a stochastic first-order oracle (SFO).
However, estimating this optimal batch size is challenging due to the dependency on unknown parameters such as the gradient Lipschitz constant and the variance of the stochastic gradient.
To address this challenge, an effective approach may involve dynamically increasing the batch size at regular intervals during optimization, such as doubling or tripling it after a fixed number of iterations. This method has theoretical guarantees for convergence (see Theorems \ref{thm:constant_1} and \ref{thm:diminishing_1}) and can adaptively approach an efficient balance between computational cost and convergence speed.

\section{Mathematical preliminaries}
Let $\real^d$ be a $d$-dimensional Euclidean space
with inner product $\ip{x}{y}_2:=x^\top y$,
which induces the norm $\norm{\cdot}_2$.
Let $\real_{++}$ be the set of positive real numbers,
i.e., $\real_{++}:=\{x\in\real\mid x>0\}$.
$I_d$ denotes a $d\times d$ identity matrix.
For square matrices $X,Y\in\real^{d\times d}$,
we write $X\prec Y$ (resp. $X\preceq Y$)
if $Y-X$ is a positive-definite (resp. positive-semidefinite) matrix.
For two matrices $X$ and $Y$ of the same dimension,
$X\odot Y$ denotes the Hadamard product, i.e., element-wise product.
Let $\max(X,Y)$ be the element-wise maximum.
Let $\mathcal{S}^d$ (resp. $\mathcal{S}^d_{+}$, $\mathcal{S}^d_{++}$)
be the set of $d\times d$ symmetric
(resp. symmetric positive-semidefinite,
symmetric positive-definite) matrices,
i.e., $S^d:=\{X\in\real^{d\times d}\mid X^\top=X\}$,
$\mathcal{S}^d_{+}:=\{X\in\real^{d\times d}\mid X\succeq O\}$
and $\mathcal{S}^d_{++}:=\{X\in\real^{d\times d}\mid X\succ O\}$.
Let $\mathcal{D}^d$ be the set of $d\times d$ diagonal matrices.
Let $\mathcal{O}_d$ be the orthogonal group, i.e.,
$\mathcal{O}_d:=\{X\in\real^{d\times d}\mid X^\top X=I_d\}$.
We denote the trace of a square matrix $A$ by $\tr(A)$.
Let $\mathcal{O}$ be Landau's symbol; i.e.,
for a sequence $(x_k)_{k=0}^\infty$, we write $x_k=\mathcal{O}(g(k))$ as $k\to\infty$ if there exist a constant $C>0$ and an index $k_0$ such that $|x_k|\leq C|g(k)|$ for all $k\geq k_0$.

In this paper, we focus on Riemannian manifolds \citep{sakai1996riemannian} embedded in Euclidean space.
Let $M$ be an embedded submanifold of $\real^d$.
Moreover, let $T_xM$ be the tangent space at a point $x\in M$ and
$TM$ be the tangent bundle of $M$.
Let $0_x$ be the zero element of $T_xM$.
The inner product $\ip{\cdot}{\cdot}_2$ of a Euclidean space $\real^d$
induces a Riemannian metric $\ip{\cdot}{\cdot}_x$ of $M$ at $x\in M$
according to $\ip{\xi}{\eta}_x=\ip{\xi}{\eta}_2=\xi^\top\eta$
for $\xi,\eta\in T_xM\subset T_x\real^d\cong\real^d$.
The norm of $\eta\in T_xM$ is defined as
$\norm{\eta}_x=\sqrt{\eta^\top\eta}=\norm{\eta}_2$.
Let $P_x:T_x\real^d\cong\real^d\to T_xM$ be the orthogonal projection
onto $T_xM$ (see \citet{absil2008optimization}).
For a smooth map $F:M\to N$ between two manifolds $M$ and $N$,
$\mathrm{D}F(x):T_xM\to T_{F(x)}N$
denotes the derivative of $F$ at $x\in M$.
The Riemannian gradient $\grad f(x)$ of a
smooth function $f:M\to\real$ at $x\in M$ is defined
as a unique tangent vector at $x$ satisfying
$\ip{\grad f(x)}{\eta}_x=\mathrm{D}f(x)[\eta]$ for any $\eta\in T_xM$.

\begin{definition}[Retraction]\label{def:retraction}
Let $M$ be a manifold.
Any smooth map $R:TM\to M$ is called a retraction on $M$
if it has the following properties.
\begin{itemize}
\item $R_x(0_x)=x$ for all $x\in M$;
\item With the canonical identification $T_{0_x}T_xM\cong T_xM$, $\mathrm{D}R_x(0_x)=\id_{T_xM}:T_xM\to T_xM$ for all $x\in M$,
\end{itemize}
where $R_x$ denotes the restriction of $R$ to $T_xM$.
\end{definition}

\subsection{Examples}
The unit sphere $\mathbb{S}^{d-1}:=\{x\in\real^d\mid \norm{x}_2=1\}$
is an embedded manifold of $\real^d$.
The tangent space $T_x\mathbb{S}^{n-1}$ at $x\in\mathbb{S}^{d-1}$ is
given by $T_x\mathbb{S}^{n-1}=\{\eta\in\real^d\mid\eta^\top x=0\}$.
The induced Riemannian metric on $\mathbb{S}^{d-1}$ is given by
$\ip{\xi}{\eta}_x=\ip{\xi}{\eta}_2:=\xi^\top\eta$ for
$\xi,\eta\in T_x\mathbb{S}^{d-1}$.
The orthogonal projection $P_x:\real^d\to T_x\mathbb{S}^{d-1}$ onto
the tangent space $T_x\mathbb{S}^{n-1}$ is
given by $P_x(\eta)=(I_d-xx^\top)\eta$ for $x\in\mathbb{S}^{d-1}$
and $\eta\in T_x\mathbb{S}^{d-1}$.

An important example is the Stiefel manifold \citep[Chapter 3.3.2]{absil2008optimization}, which is defined as
$\St(p,n):=\{X\in\real^{n\times p}\mid X^\top X=I_p\}$ for $n\geq p$.
$\St(p,n)$ is an embedded manifold of $\real^{n\times d}$.
The tangent space $T_x\St(p,n)$ at $X\in\St(p,n)$ is given by
\begin{align*}
    T_X\St(p,n)=\{\eta\in\real^{n\times p}\mid
    X^\top\eta+\eta^\top X=O\}.
\end{align*}
The induced Riemannian metric on $\St(p,n)$ is given by
$\ip{\xi}{\eta}_X=\tr(\xi^\top\eta)$ for $\xi,\eta\in T_X\St(p,n)$.
The orthogonal projection onto
the tangent space $T_X\St(p,n)$ is given by
$P_X(\eta)=\eta-X\sym(X^\top\eta)$
for $X\in\St(p,n)$, $\eta\in T_X\St(p,n)$, where $\sym(A):=(A+A^\top)/2$.
The Stiefel manifold $\St(p,n)$ reduces to the orthogonal groups when
$n=p$, i.e., $\St(p,p)=\mathcal{O}_p$.

Moreover, we will also consider the Grassmann manifold \citep[Chapter 3.4.4]{absil2008optimization} $\Gr(p,n):=\St(p,n)/\mathcal{O}_p$.
Let $X\in\St(p,n)$ be a representative of
$[X]:=\{XQ\mid Q\in\mathcal{O}_p\}\in\Gr(p,n)$.
We denote the horizontal lift of $\eta\in T_{[X]}\Gr(p,n)$ at $X$
by $\bar{\eta}_X\in T_X\St(p,n)$.
The Riemannian metric of the Grassmann manifold $\Gr(p,n)$ is endowed with
$\ip{\xi}{\eta}_{[X]}:=\ip{\bar{\xi}_X}{\bar{\eta}_X}_2$
for $\xi,\eta\in T_{[X]}\Gr(p, n)$.
The orthogonal projection onto the tangent space
$T_{[X]}\Gr(p,n)$ is defined through
\begin{align*}
    \overline{P_{[X]}(\eta)}=(I_n-XX^\top)\bar{\eta}_X,
\end{align*}
for $[X]\in\Gr(p,n)$ and $\eta\in T_{[X]}\Gr(p,n)$.
Note that while the Grassmann manifold itself is not an embedded submanifold, in practical implementations, computations involving it is often performed via the Stiefel manifold. This allows the proposed method to be applied. Such an approach was used in previous studies, including \citet{kasai2019riemannian, hu2024riemannian}.

\subsection{Riemannian stochastic optimization problem}
We focus on minimizing an objective function $f:M\to\real$ of the form,
\begin{align*}
    f(x)=\frac{1}{N}\sum_{i=1}^Nf_i(x),
\end{align*}
where $f_i$ is a smooth function for $i=1,\ldots,N$.
We use the mini-batch strategy as follows
(see \citet{iiduka2024theoretical} for detail).
$s_{k,i}$ is a random variable generated from the $i$-th sampling
at the $k$-th iteration, and
$\bs_k:=(s_{k,1},\ldots,s_{k,b_k})^\top$
is independent of $(x_k)_{k=1}^\infty$,
where $b_k$ $(\leq N)$ is the batch size.
To simplify the notation, we denote the expectation $\mathbb{E}_{\bs_k}$
with respect to $\bs_k$ by $\mathbb{E}_k$.
From the independence of $\bs_1,\bs_2,\ldots,\bs_k$,
we can define the total expectation $\ex$
by $\ex_1\ex_2\cdots\ex_k$.
We define the mini-batch stochastic gradient $\grad f_{B_k}(x_k)$
of $f$ at the $k$-th iteration by
\begin{align}\label{eq:bgrad}
    \grad f_{B_k}(x_k):=\frac{1}{b_k}\sum_{i=1}^{b_k} \grad f_{s_{k,i}}(x_k).
\end{align}
Our main objective is to find a stationary point $x_\star\in M$
satisfying $\grad f(x_\star)=0_{x_\star}$.

\subsection{Proposed general framework of Riemannian adaptive methods}
\citet{reddi2018convergence}
provided a general framework of adaptive gradient
methods in Euclidean space.
We devised Algorithm \ref{alg:general} by
generalizing that framework to an embedded manifold of $\real^d$.
The main difference from the Euclidean setting is
computing the projection of $H_k^{-1}m_k$
onto the tangent space $T_{x_k}M$
by the orthogonal projection $P_{x_k}$.
Algorithm \ref{alg:general} requires sequences of maps,
$(\phi_k)_{k=1}^\infty$ and $(\psi_k)_{k=1}^\infty$,
such that
$\phi_k:T_{x_1}M\times\cdots\times T_{x_k}M\to\real^d$ and
$\psi_k:T_{x_1}M\times\cdots\times 
T_{x_k}M\to\mathcal{D}^d\cap\mathcal{S}^d_{++}$, respectively.
Note that Algorithm \ref{alg:general} is still abstract because the
maps $(\phi_k)_{k=1}^\infty$ and $(\psi_k)_{k=1}^\infty$
are not specified.
Algorithm \ref{alg:general} is the extension of
a general framework in Euclidean space
proposed by Reddi, Kale and Kumar
\citep{reddi2018convergence}.
In the Euclidean setting (i.e., $M=\real^d$),
the orthogonal projection $P_{x_k}$ yields an identity map and
this corresponds to the Euclidean version of the general framework.

\begin{algorithm}
\caption{The general framework of Riemannian adaptive
optimization methods on an embedded submanifold of $\real^d$.
\label{alg:general}}
\begin{algorithmic}[1]
\Require
Initial point $x_1 \in M$, retraction $R:TM\to M$,
step sizes $(\alpha_k)_{k=1}^{\infty}\subset\real_{++}$,
sequences of maps $(\phi_k)_{k=1}^\infty$,
$(\psi_k)_{k=1}^\infty$.
\Ensure Sequence $(x_k)_{k=1}^{\infty} \subset M$.
\State $k\leftarrow 1$.
\Loop
\State $g_k=\grad f_{B_k}(x_k)$.
\State $m_k=\phi_k(g_1,\ldots,g_k)\in\real^d$.
\State $H_k=\psi_k(g_1,\ldots,g_k)
\in\mathcal{D}^d\cap\mathcal{S}^d_{++}$.
\State $x_{k+1}=R_{x_k}(-\alpha_kP_{x_k}(H_k^{-1}m_k))$.
\State $k\leftarrow k + 1$.
\EndLoop
\end{algorithmic}
\end{algorithm}

By setting the maps $(\phi_n)_{n=1}^\infty$ and $(\psi_n)_{n=1}^\infty$ in Algorithm \ref{alg:general} to those used by an adaptive optimization method in Euclidean space, any adaptive optimization method can be extended to Riemannian manifolds.
In the following, we show some examples of extending the optimization algorithm in Euclidean space to Riemannian manifolds by using Algorithm \ref{alg:general}.

Here, SGD is the most basic method; it uses
\begin{align*}
    \phi_k(g_1,\ldots,g_k)=g_k,\quad
    \psi_k(g_1,\ldots,g_k)=I_d.    
\end{align*}
Algorithm \ref{alg:general} with these maps corresponds
to RSGD \citep{bonnabel2013stochastic} in the Riemannian setting.
AdaGrad \citep{duchi2011adaptive}, the first adaptive gradient method
in Euclidean space that  propelled research on adaptive methods,
uses the sequences of maps $\phi_k(g_1,\ldots,g_k)=g_k$ and
\begin{align*}
    v_k&=v_{k-1}+g_k\odot g_k, \\
    \psi_k(g_1,\ldots,g_k)&=\diag(\sqrt{v_{k,1}},\ldots,
\sqrt{v_{k,d}})+\epsilon I_d,
\end{align*}
where $v_0=0\in\real^d$ and $\epsilon>0$.
Here, we will denote the $i$-th component of $v_k$ by $v_{k,i}$.
The exponential moving average variant of AdaGrad is often used in deep-learning
training. The basic variant is RMSProp \citep{hinton2012neural},
which uses the sequences of maps $\phi_k(g_1,\ldots,g_k)=g_k$ and 
\begin{align*}
    v_k&=\beta_2v_{k-1}+(1-\beta_2)g_k\odot g_k, \\
    \psi_k(g_1,\ldots,g_k)&=\diag(\sqrt{v_{k,1}},\ldots,
\sqrt{v_{k,d}})+\epsilon I_d,
\end{align*}
where $v_0=0\in\real^d$ and $\epsilon>0$.
Both Algorithm \ref{alg:general} with these maps and cRMSProp \citep{roy2018geometry}
can be considered extensions of RMSProp to Riemannian manifolds.
They differ from each other in that parallel transport is needed
to compute the search direction of cRMSProp, but it is not needed in our method. 

Adam \citep{kingma2015adam} is one of the most common variants;
it uses the sequence of maps, 
\begin{align}\label{eq:Adam-mk}
    m_k=\beta_1m_{k-1}+(1-\beta_1)g_k, \quad
    \phi_k(g_1,\ldots,g_k)=\frac{m_k}{1-\beta_1^{k+1}},
\end{align}
and
\begin{align}
    v_k=\beta_2v_{k-1}+(1-\beta_2)g_k\odot g_k, \quad
    \hat{v}_k=\frac{v_k}{1-\beta_2^{k+1}}, \nonumber \\
    \psi_k(g_1,\ldots,g_k)=\diag(\sqrt{\hat{v}_{k,1}},\ldots,
    \sqrt{\hat{v}_{k,d}})+\epsilon I_d, \label{eq:Adam-Hk}
\end{align}
where $m_0=0\in\real^d$ and $v_0=0\in\real^d$. $\beta_1=0.9$, $\beta_2=0.999$ and
$\epsilon=10^{-8}$ are typically recommended values. Moreover, within the general
framework (Algorithm \ref{alg:general}),
we propose the following algorithm as an extension of AMSGrad
\citep{reddi2018convergence} in Euclidean space.

\begin{algorithm}
\caption{AMSGrad on an embedded submanifold of $\real^d$.
\label{alg:AMSGrad}}
\begin{algorithmic}[1]
\Require
Initial point $x_1\in M$, retraction $R:TM\to M$,
step sizes $(\alpha_k)_{k=1}^{\infty}\subset\real_{++}$,
mini-batch sizes $(b_k)_{k=1}^\infty \subset \mathbb{R}_{++}$
hyperparameters $\beta_1,\beta_2\in[0,1)$, $\epsilon>0$.
\Ensure Sequence $(x_k)_{k=1}^{\infty} \subset M$.
\State Set $m_0=0$, $v_0=0$ and $\hat{v}_0=0$.
\State $k\leftarrow 1$.
\Loop
\State $g_k=\grad f_{B_k}(x_k)$.
\State $m_k=\beta_1m_{k-1}+(1-\beta_1)g_k$.
\State $v_k=\beta_2v_{k-1}+(1-\beta_2)g_k\odot g_k$.
\State $\hat{v}_k=\max(\hat{v}_{k-1},v_k)$.
\State $H_k=\diag(\sqrt{\hat{v}_{k,1}},\ldots,
\sqrt{\hat{v}_{k,d}})+\epsilon I_d$.
\State $x_{k+1}=R_{x_k}(-\alpha_kP_{x_k}(H_k^{-1}m_k))$.
\State $k\leftarrow k + 1$.
\EndLoop
\end{algorithmic}
\end{algorithm}

\section{Convergence analysis}
\subsection{Assumptions and useful lemmas}\label{sec:asm}
We make the following Assumptions \ref{asm:main} \ref{asm:unbiased}--\ref{asm:Lipschitz}.
\ref{asm:unbiased} and \ref{asm:variance} include the standard conditions.
\ref{asm:bound} assumes the boundedness of the gradient.
\ref{asm:Lipschitz} is an assumption on the Lipschitz continuity of the gradient.
\ref{asm:below} assumes that a lower bound exists.
\begin{assumption}\label{asm:main}
Let $(x_k)_{k=1}^\infty$ be a sequence generated
by Algorithm \ref{alg:general}.
\begin{enumerate}[label=(A\arabic*)]
\item\label{asm:unbiased}
$\ex_k[\grad f_{s_{k,i}}(x_k)]=\grad f(x_k)$
for all $k\geq 1$ and $i=1,\ldots,b_k$.
\item\label{asm:variance}
There exists $\sigma^2>0$ such that
\begin{align*}
    \ex_k\left[\norm{\grad f_{s_{k,i}}(x_k)-\grad f(x_k)}_2^2\right]
    \leq\sigma^2,
\end{align*}
for all $k\geq 1$ and $i=1,\ldots,b_k$.
\item\label{asm:bound}
There exists $G,B>0$ such that $\norm{\grad f(x_k)}_2\leq G$
and $\norm{\grad f_{B_k}(x_k)}_2\leq B$ for all $k\geq 1$.
\item\label{asm:Lipschitz}
There exists a constant $L>0$ such that
\begin{align*}
    \abs{\mathrm{D}(f\circ R_x)(\eta)[\eta]-\mathrm{D}f(x)
    [\eta]}\leq L\norm{\eta}_2^2,
\end{align*}
for all $x\in M$, $\eta\in T_xM$.
\item\label{asm:below} $f$ is bounded below by $f_\star\in\real$.
\end{enumerate}
\end{assumption}

Assumptions \ref{asm:unbiased} and \ref{asm:variance} are satisfied when the probability distribution is uniform (see Appendix \ref{apx:asm-variance}) or when the full gradient is used, i.e., when the batch size equals the total number of data. \ref{asm:unbiased} and \ref{asm:variance} have been used to analyze adaptive methods in Euclidean space \citep[Pages 3, 12, and 16]{zaheer2018adaptive} and on the Hadamard manifold \citep[(2.2) and (2.3)]{sakai2024convergence}.
Assumptions \ref{asm:bound}--\ref{asm:below} are often used in both Euclidean spaces and Riemannian manifolds. Intuitively, in practical applications, these assumptions are satisfied because the search space of the algorithm is bounded. If $f$ is unbounded on an unbounded search space $X$, then \ref{asm:below} does not hold. Although there is a possibility such that \ref{asm:Lipschitz} is satisfied, \ref{asm:variance} would not hold, even when the probability distribution is uniform.
When $f$ is unbounded, there is no minimizer of $f$. Hence, \ref{asm:below} is essential in order to analyze optimization methods. Moreover, \ref{asm:Lipschitz} is needed in order to use a retraction $L$-smooth (Proposition \ref{prp:ret_Lsmooth}) that is an extension of the Euclidean-type descent lemma. \ref{asm:bound} holds when the manifold is compact \citep{absil2008optimization} or through a slight modification of the objective function and the algorithm \citep{kasai2018riemannian}.

It is known that if Assumption \ref{asm:main} \ref{asm:Lipschitz} holds,
so does the following Proposition \ref{prp:ret_Lsmooth}.
This property is known as retraction $L$-smoothness
(see \citet{huang2015broyden, kasai2018riemannian} for details). 

\begin{proposition}\label{prp:ret_Lsmooth}
    Suppose that Assumption \ref{asm:main} \ref{asm:Lipschitz} holds.
    Then,
    \begin{align*}
        f(R_{x}(\eta))\leq f(x)+\ip{\grad f(x)}{\eta}_2
        +\frac{L}{2}\norm{\eta}_2^2,
    \end{align*}
    for all $x\in M$ and $\eta\in T_xM$.
\end{proposition}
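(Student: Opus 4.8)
The plan is to derive the retraction $L$-smooth inequality from Assumption \ref{asm:main} \ref{asm:Lipschitz} by viewing the composite map $f \circ R_x : T_x M \to \real$ as a function on a (finite-dimensional) vector space and applying the fundamental theorem of calculus along the ray $t \mapsto t\eta$. Fix $x \in M$ and $\eta \in T_x M$, and define $g(t) := f(R_x(t\eta))$ for $t \in [0,1]$. Since $R$ is smooth, $g$ is differentiable, and $g'(t) = \mathrm{D}(f \circ R_x)(t\eta)[\eta]$; note also $g(0) = f(R_x(0_x)) = f(x)$ by the first retraction property, and $g(1) = f(R_x(\eta))$.

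The key step is to write
\begin{align*}
    f(R_x(\eta)) - f(x) = g(1) - g(0) = \int_0^1 g'(t)\,dt = \int_0^1 \mathrm{D}(f \circ R_x)(t\eta)[\eta]\,dt,
\end{align*}
and then compare each integrand with the constant $\mathrm{D}f(x)[\eta] = \ip{\grad f(x)}{\eta}_2$ (using the definition of the Riemannian gradient and the fact that the metric is the Euclidean one). Adding and subtracting, I would bound
\begin{align*}
    \abs{f(R_x(\eta)) - f(x) - \ip{\grad f(x)}{\eta}_2}
    &= \abs{\int_0^1 \left(\mathrm{D}(f \circ R_x)(t\eta)[\eta] - \mathrm{D}f(x)[\eta]\right)dt} \\
    &\leq \int_0^1 \abs{\mathrm{D}(f \circ R_x)(t\eta)[\eta] - \mathrm{D}f(x)[\eta]}\,dt.
\end{align*}
Here there is a small mismatch to resolve: Assumption \ref{asm:Lipschitz} as stated bounds $\abs{\mathrm{D}(f\circ R_x)(\eta)[\eta] - \mathrm{D}f(x)[\eta]}$, i.e. the directional derivative is taken in the direction $\eta$ at the point $\eta$, whereas in the integrand the base point is $t\eta$ but the direction is still $\eta$, not $t\eta$. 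The cleanest fix is to observe that $\mathrm{D}(f\circ R_x)(t\eta)[\eta] = \frac{1}{t}\,\mathrm{D}(f\circ R_x)(t\eta)[t\eta]$ for $t \in (0,1]$ by linearity of the derivative in its argument, so that applying \ref{asm:Lipschitz} at the point $t\eta$ gives $\abs{\mathrm{D}(f\circ R_x)(t\eta)[t\eta] - \mathrm{D}f(x)[t\eta]} \leq L\norm{t\eta}_2^2 = L t^2 \norm{\eta}_2^2$, hence $\abs{\mathrm{D}(f\circ R_x)(t\eta)[\eta] - \mathrm{D}f(x)[\eta]} \leq L t \norm{\eta}_2^2$ (the factor $t$ surviving after dividing by $t$).

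Plugging this bound into the integral yields
\begin{align*}
    \abs{f(R_x(\eta)) - f(x) - \ip{\grad f(x)}{\eta}_2} \leq \int_0^1 L t \norm{\eta}_2^2\,dt = \frac{L}{2}\norm{\eta}_2^2,
\end{align*}
and dropping the absolute value on the left (keeping only the upper estimate) gives exactly the claimed inequality $f(R_x(\eta)) \leq f(x) + \ip{\grad f(x)}{\eta}_2 + \frac{L}{2}\norm{\eta}_2^2$. The main obstacle I anticipate is precisely the base-point-versus-direction discrepancy between the hypothesis and what the integral naturally produces; handling it requires the rescaling trick above (or, alternatively, the paper may intend \ref{asm:Lipschitz} to be read as holding with $\eta$ replaced by any scalar multiple, in which case one applies it directly at $t\eta$ with direction $t\eta$). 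Everything else — smoothness of $g$, the retraction identity $R_x(0_x) = x$, the identification of $\mathrm{D}f(x)[\eta]$ with $\ip{\grad f(x)}{\eta}_2$, and the elementary integral $\int_0^1 t\,dt = 1/2$ — is routine.
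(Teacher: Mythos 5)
Your proof is correct. Note that the paper itself does not prove Proposition \ref{prp:ret_Lsmooth}; it only cites \citet{huang2015broyden, kasai2018riemannian} for the fact that \ref{asm:Lipschitz} implies retraction $L$-smoothness, and your argument is precisely the standard one from those references: integrate $g'(t)=\mathrm{D}(f\circ R_x)(t\eta)[\eta]$ over $[0,1]$, compare with the constant $\mathrm{D}f(x)[\eta]=\ip{\grad f(x)}{\eta}_2$, and use $\int_0^1 Lt\,\mathrm{d}t = L/2$. The one genuinely delicate point --- that \ref{asm:Lipschitz} bounds the difference at base point $\eta$ in direction $\eta$, whereas the integrand has base point $t\eta$ but direction $\eta$ --- is handled correctly by your rescaling $\mathrm{D}(f\circ R_x)(t\eta)[\eta]=t^{-1}\mathrm{D}(f\circ R_x)(t\eta)[t\eta]$, which turns the quadratic bound $L\norm{t\eta}_2^2$ into the linear-in-$t$ bound $Lt\norm{\eta}_2^2$ needed for the integral.
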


\subsection{Convergence analysis of Algorithm \ref{alg:general}}
The main difficulty in analyzing the convergence of adaptive gradient methods
is due to the stochastic momentum $m_k=\phi_k(g_1,\ldots,g_k)$.
As a way to overcome this challenge in Euclidean space,
\citet{zhou2024convergence, yan2018unified, chen2019convergence}
defined a new sequence $z_k$,
\begin{align*}
z_k=x_k+\frac{\beta_1}{1-\beta_1}(x_k-x_{k-1}).
\end{align*}
However,
since the embedded submanifold is not closed under addition in Euclidean space,
this strategy does not work in the Riemannian setting.
Therefore, by following the policy of \citet{zaheer2018adaptive},
let us analyze the case in which $\phi_k(g_1,\ldots,g_k)=g_k$.
To simplify the notation, we denote the $i$-th component of $g_k$
(resp. $v_k$, $\hat{v}_k$) by $g_{k,i}$ (resp. $v_{k,i}$, $\hat{v}_{k,i}$). 

\begin{lemma}\label{lem:embedded_Lipsitz}
Suppose that Assumption \ref{asm:main} \ref{asm:Lipschitz} holds.
If $\phi_k(g_1,\ldots,g_k)=g_k$ and
$H_k^{-1}\preceq\nu I_d$ for all $k\geq 1$ and some $\nu>0$,
then the sequence $(x_k)_{k=1}^{\infty}\subset M$ generated by 
Algorithm \ref{alg:general} satisfies
\begin{align*}
    f(x_{k+1})&\leq f(x_k)
    +\ip{\grad f(x_k)}{-\alpha_kH_k^{-1}g_{k}}_2
    +\frac{L\alpha_k^2\nu^2}{2}\norm{g_{k}}^2_2,
\end{align*}
for all $k\geq 1$.
\end{lemma}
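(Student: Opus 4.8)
The plan is to apply the retraction $L$-smoothness (Proposition~\ref{prp:ret_Lsmooth}) to the update step of Algorithm~\ref{alg:general} and then control the quadratic remainder term using the hypothesis $H_k^{-1} \preceq \nu I_d$. Concretely, at the $k$-th iteration the algorithm sets $x_{k+1} = R_{x_k}(\eta_k)$ with $\eta_k := -\alpha_k P_{x_k}(H_k^{-1} m_k) = -\alpha_k P_{x_k}(H_k^{-1} g_k)$, where the second equality uses the assumption $\phi_k(g_1,\ldots,g_k) = g_k$, so that $m_k = g_k$. First I would invoke Proposition~\ref{prp:ret_Lsmooth} with $x = x_k$ and $\eta = \eta_k \in T_{x_k}M$, yielding
\begin{align*}
f(x_{k+1}) \leq f(x_k) + \ip{\grad f(x_k)}{\eta_k}_2 + \frac{L}{2}\norm{\eta_k}_2^2.
\end{align*}

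Next I would simplify the linear term. Since $\grad f(x_k) \in T_{x_k}M$ and $P_{x_k}$ is the orthogonal projection onto $T_{x_k}M$, it is self-adjoint and fixes tangent vectors, so $\ip{\grad f(x_k)}{-\alpha_k P_{x_k}(H_k^{-1} g_k)}_2 = \ip{P_{x_k}\grad f(x_k)}{-\alpha_k H_k^{-1} g_k}_2 = \ip{\grad f(x_k)}{-\alpha_k H_k^{-1} g_k}_2$, which is exactly the middle term in the claimed inequality. For the quadratic term I would use that orthogonal projections are nonexpansive, $\norm{P_{x_k}(H_k^{-1}g_k)}_2 \leq \norm{H_k^{-1}g_k}_2$, and then bound $\norm{H_k^{-1}g_k}_2 \leq \norm{H_k^{-1}}_{\mathrm{op}}\norm{g_k}_2 \leq \nu \norm{g_k}_2$, where the operator-norm bound follows from $H_k^{-1} \preceq \nu I_d$ together with $H_k^{-1}$ being symmetric positive definite (so its operator norm equals its largest eigenvalue, which is at most $\nu$). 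Hence $\norm{\eta_k}_2^2 = \alpha_k^2 \norm{P_{x_k}(H_k^{-1}g_k)}_2^2 \leq \alpha_k^2 \nu^2 \norm{g_k}_2^2$, and substituting gives the $\frac{L\alpha_k^2\nu^2}{2}\norm{g_k}_2^2$ term. Combining the two estimates yields the lemma.

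The argument is essentially a routine chain of inequalities, so there is no deep obstacle; the step requiring the most care is the handling of the linear term, specifically verifying that the orthogonal projection $P_{x_k}$ can be moved off of $H_k^{-1}g_k$ and onto $\grad f(x_k)$ via self-adjointness, and that it then acts as the identity because $\grad f(x_k)$ is already tangent. One should also be slightly careful that the inner products here are the Euclidean ones on $\real^d$ (matching how $\grad f$ and $P_{x_k}$ are defined in the preliminaries), which is consistent since the Riemannian metric is the restriction of $\ip{\cdot}{\cdot}_2$. No probabilistic reasoning is needed at this stage — the lemma is a deterministic per-iteration descent estimate, and the expectations will only enter in the subsequent theorems.
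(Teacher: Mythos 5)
Your proposal is correct and follows essentially the same route as the paper's proof: apply retraction $L$-smoothness to $\eta_k=-\alpha_k P_{x_k}(H_k^{-1}g_k)$, move $P_{x_k}$ onto $\grad f(x_k)$ by self-adjointness (where it acts as the identity), and bound the quadratic term via nonexpansiveness of the orthogonal projection together with $H_k^{-1}\preceq\nu I_d$. The only cosmetic difference is that the paper derives the nonexpansiveness inequality $\norm{P_{x_k}(H_k^{-1}g_k)}_2\leq\norm{H_k^{-1}g_k}_2$ inline (with a separate check of the case $P_{x_k}(H_k^{-1}g_k)=0$), whereas you cite it as a standard property.
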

\begin{proof}
    See Appendix \ref{apx:embedded_Lipsitz}.
\end{proof}

\begin{theorem}\label{thm:main}
Suppose that Assumptions \ref{asm:main}
\ref{asm:unbiased}--\ref{asm:below} hold.
Moreover, let us assume that $\alpha_{k+1}\leq\alpha_k$, $\phi_k(g_1,\ldots,g_k)=g_k$,
$\alpha_kH_k^{-1}\succeq\alpha_{k+1}H_{k+1}^{-1}$ and
there exist $\mu,\nu>0$ such that $\mu I_d\preceq H_k^{-1}\preceq\nu I_d$
for all $k\geq 1$.
Then, the sequence $(x_k)_{k=1}^{\infty}\subset M$ generated by 
Algorithm \ref{alg:general} satisfies
\begin{align*}
    \sum_{k=1}^K\alpha_k\left(\mu
    -\frac{L\alpha_k\nu^2}{2}\right)
    \ex\left[\norm{\grad f(x_k)}_2^2\right]
    \leq C_1+C_2\sum_{k=1}^K \frac{\alpha_k^2}{b_k},
\end{align*}
for some constant $C_1,C_2>0$.
\end{theorem}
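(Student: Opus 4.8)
The plan is to start from the descent inequality in Lemma~\ref{lem:embedded_Lipsitz} and sum it over $k=1,\ldots,K$, then take total expectation. The telescoping sum of $f(x_{k+1})-f(x_k)$ collapses to $f(x_{K+1})-f(x_1)\ge f_\star-f(x_1)$ by Assumption~\ref{asm:below}, which will furnish the constant $C_1$ (up to rearrangement). So the real work is to lower-bound the cross term $-\sum_k \alpha_k\ex[\ip{\grad f(x_k)}{H_k^{-1}g_k}_2]$ and to upper-bound the quadratic term $\tfrac{L}{2}\sum_k \alpha_k^2\nu^2\ex[\norm{g_k}_2^2]$.

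\textbf{Handling the cross term.} First I would condition on the past and use Assumption~\ref{asm:unbiased} to get $\ex_k[g_k]=\grad f(x_k)$; since $H_k$ and $x_k$ are measurable with respect to the past (the batch $\bs_k$ is independent of $(x_k)$), $\ex_k[\ip{\grad f(x_k)}{H_k^{-1}g_k}_2]=\ip{\grad f(x_k)}{H_k^{-1}\grad f(x_k)}_2\ge \mu\norm{\grad f(x_k)}_2^2$, using $H_k^{-1}\succeq\mu I_d$. This is where the monotonicity hypotheses $\alpha_{k+1}\le\alpha_k$ and $\alpha_k H_k^{-1}\succeq\alpha_{k+1}H_{k+1}^{-1}$ may be needed—either directly here, or in the proof of Lemma~\ref{lem:embedded_Lipsitz}, or to control an auxiliary telescoping term (for instance if one writes the inner product in terms of $\alpha_k H_k^{-1}-\alpha_{k+1}H_{k+1}^{-1}\succeq O$ to get a telescoping bound). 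I would keep that machinery available but expect the clean route to be the conditional-expectation argument above, which directly yields $-\alpha_k\ex[\ip{\grad f(x_k)}{H_k^{-1}g_k}_2]\le -\mu\alpha_k\ex[\norm{\grad f(x_k)}_2^2]$.

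\textbf{Handling the quadratic term.} I would decompose $g_k=\grad f(x_k)+(g_k-\grad f(x_k))$ so that $\ex_k[\norm{g_k}_2^2]=\norm{\grad f(x_k)}_2^2+\ex_k[\norm{g_k-\grad f(x_k)}_2^2]$. The mini-batch variance reduction is the key point: since $g_k=\frac1{b_k}\sum_{i=1}^{b_k}\grad f_{s_{k,i}}(x_k)$ is an average of $b_k$ i.i.d.\ unbiased samples, $\ex_k[\norm{g_k-\grad f(x_k)}_2^2]\le\sigma^2/b_k$ by Assumption~\ref{asm:variance}. Hence $\tfrac{L\alpha_k^2\nu^2}{2}\ex[\norm{g_k}_2^2]\le\tfrac{L\alpha_k^2\nu^2}{2}\ex[\norm{\grad f(x_k)}_2^2]+\tfrac{L\nu^2\sigma^2}{2}\cdot\tfrac{\alpha_k^2}{b_k}$. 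Summing, combining with the cross-term bound, and moving the $\tfrac{L\alpha_k^2\nu^2}{2}\ex[\norm{\grad f(x_k)}_2^2]$ term to the left-hand side produces exactly $\sum_k\alpha_k(\mu-\tfrac{L\alpha_k\nu^2}{2})\ex[\norm{\grad f(x_k)}_2^2]$ on the left, with $C_1=f(x_1)-f_\star$ and $C_2=\tfrac{L\nu^2\sigma^2}{2}$ on the right.

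\textbf{Main obstacle.} The only genuinely delicate point is justifying the variance-of-the-average bound $\ex_k[\norm{g_k-\grad f(x_k)}_2^2]\le\sigma^2/b_k$ rigorously: it requires that the $b_k$ samples at iteration $k$ be independent (or at least pairwise-uncorrelated) conditional on the past, so that cross terms vanish—this is implicit in the mini-batch sampling model set up before \eqref{eq:bgrad}, but should be invoked explicitly. A secondary bookkeeping concern is making sure the measurability structure lets total expectation factor through the conditional steps (the $\ex=\ex_1\cdots\ex_K$ identity stated in the preliminaries), and making sure none of the monotonicity hypotheses is silently needed for an interchange I have glossed over; if Lemma~\ref{lem:embedded_Lipsitz} already absorbs them, the summation here is essentially routine.
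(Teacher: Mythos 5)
Your proposal has a genuine gap at the cross term, and it is exactly the point the whole theorem turns on. You claim that $H_k$ is ``measurable with respect to the past,'' so that $\ex_k[\ip{\grad f(x_k)}{H_k^{-1}g_k}_2]=\ip{\grad f(x_k)}{H_k^{-1}\grad f(x_k)}_2\ge\mu\norm{\grad f(x_k)}_2^2$. This is false: in Algorithm \ref{alg:general}, $H_k=\psi_k(g_1,\ldots,g_k)$ depends on $g_k$ and hence on $\bs_k$, so $H_k^{-1}$ and $g_k$ are correlated and $\ex_k[H_k^{-1}g_k]\neq H_k^{-1}\ex_k[g_k]$. (Only $x_k$ and $H_{k-1}$ are $\bs_k$-independent.) This dependence is precisely the difficulty of adaptive methods that the paper flags, and your ``clean route'' silently assumes it away. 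You do mention, parenthetically, the possibility of a telescoping bound via $\alpha_{k-1}H_{k-1}^{-1}-\alpha_kH_k^{-1}\succeq O$, but you set it aside in favor of the flawed argument, and you misidentify the main obstacle as the mini-batch variance bound (which is routine and is the paper's Lemma \ref{lem:sn_bgrad}).

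The paper's actual proof repairs this as follows. For $k\ge 2$ it writes $-\alpha_kH_k^{-1}g_k=-\alpha_{k-1}H_{k-1}^{-1}g_k+(\alpha_{k-1}H_{k-1}^{-1}-\alpha_kH_k^{-1})g_k$. The first piece is past-measurable in its matrix factor, so conditioning gives $-\alpha_{k-1}\ip{g(x_k)}{H_{k-1}^{-1}g(x_k)}_2\le-\alpha_k\mu\norm{\grad f(x_k)}_2^2$ (using $\alpha_k\le\alpha_{k-1}$ and $H_{k-1}^{-1}\succeq\mu I_d$); the second piece is bounded by $GB\tr(\alpha_{k-1}H_{k-1}^{-1}-\alpha_kH_k^{-1})$ via Assumption \ref{asm:main} \ref{asm:bound} and Lemma \ref{lem:linalg}, and these traces telescope to $GB\alpha_1\nu d$ when summed. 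This is where the hypotheses $\alpha_kH_k^{-1}\succeq\alpha_{k+1}H_{k+1}^{-1}$ and the gradient bounds $G,B$ are genuinely consumed. The $k=1$ term, where no past-measurable $H_0$ is available, is absorbed into the constant $C_0$. Consequently your claimed constant $C_1=f(x_1)-f_\star$ is also too small; the correct constant must additionally carry $C_0$ and the telescoped trace term $GB\alpha_1\nu d$. Your treatment of the quadratic term and the telescoping of $f(x_k)$ matches the paper and is fine.
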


\textbf{Remark: }Since Algorithm \ref{alg:AMSGrad} satisfies
$\hat{v}_{k+1,i}:=\max(\hat{v}_{k,i},v_{k+1,i})\geq\hat{v}_{k,i}$,
it together with $\alpha_{k+1}\leq\alpha_k$, leads to
$\alpha_k/(\sqrt{\hat{v}_{k,i}}+\epsilon)
\geq\alpha_{k+1}/(\sqrt{\hat{v}_{k+1,i}}+\epsilon)$.
Moreover, from Lemma \ref{lem:v_bound},
\begin{align*}
    \frac{1}{B+\epsilon}\leq\frac{1}{\sqrt{\hat{v}_{k,i}}+\epsilon}
    \leq\frac{1}{\epsilon},
\end{align*}
which implies
$(B+\epsilon)^{-1}I_d\preceq H_k^{-1}\preceq\epsilon^{-1}I_d$.
Therefore, Algorithm \ref{alg:AMSGrad} satisfies the assumption
$\alpha_kH_k^{-1}\succeq\alpha_{k+1}H_{k+1}^{-1}$ and
$\mu I_d\preceq H_k^{-1}\preceq\nu I_d$ with
$\mu=(B+\epsilon)^{-1}$ and $\nu=\epsilon^{-1}$.

\begin{proof}
We denote $\grad f(x_k)$ by $g(x_k)$.
First, let us consider the case of $k=1$.
From Lemma \ref{lem:embedded_Lipsitz}, we have
\begin{align*}
    f(x_2)&\leq f(x_1)
    +\ip{g(x_1)}{-\alpha_1H_1^{-1}g_1}_2
    +\frac{L\alpha_1^2\nu^2}{2}\norm{g_1}^2_2.
\end{align*}
By taking $\ex_{1}[\cdot]$ of both sides, we obtain
\begin{align*}
    \ex_{1}[f(x_2)]&\leq f(x_1)
    +\ip{g(x_1)}{-\alpha_1\ex_{1}[H_1^{-1}g_1]}_2
    +\frac{L\alpha_1^2\nu^2}{2}\ex_{1}
    \left[\norm{g_1}^2_2\right] \\
    &\leq f(x_1)
    +\ip{g(x_1)}{-\alpha_1\ex_{1}[H_1^{-1}g_1]}_2
    +\frac{L\alpha_1^2\nu^2}{2}
    \left(\frac{\sigma^2}{b_1}+\norm{g(x_1)}_2^2\right)
\end{align*}
where the second inequality comes from Lemma \ref{lem:sn_bgrad}.
By taking $\ex[\cdot]$ of both sides and rearranging terms,
we get
\begin{align*}
    -\frac{L\alpha_1\nu^2}{2}
    \ex\left[\norm{g(x_1)}_2^2\right]
    \leq f(x_1)-\ex[f(x_2)]
    +\ip{g(x_1)}{-\alpha_1\ex[H_1^{-1}g_1]}_2
    +\frac{L\alpha_1^2\sigma^2\nu^2}{2b_1}.
\end{align*}
By adding $\alpha_1\mu G^2$ to both sides, we obtain
\begin{align*}
    \alpha_1\mu G^2
    -\frac{L\alpha_1\nu^2}{2}
    \ex\left[\norm{g(x_1)}_2^2\right]
    \leq f(x_1)-\ex[f(x_2)]
    +\frac{L\alpha_1^2\sigma^2\nu^2}{2b_1}
    +\underbrace{
    \ip{g(x_1)}{-\alpha_1\ex[H_1^{-1}g_1]}_2
    +\alpha_1\mu G^2}_{C_0}.
\end{align*}
Here, we note that
\begin{align*}
\alpha_1\mu\ex\left[\norm{g(x_1)}_2^2\right]\leq
\alpha_1\mu G^2.
\end{align*}
Therefore, we have
\begin{align}\label{eq:sum_gradient_bound1}
    \alpha_1\left(\mu-\frac{L\alpha_1\nu^2}{2}\right)
    \ex\left[\norm{g(x_1)}_2^2\right]
    \leq f(x_1)-\ex[f(x_2)]
    +\frac{L\alpha_1^2\sigma^2\nu^2}{2b_1}+C_0.
\end{align}
Next, let us consider the case of $k\geq 2$.
From Lemma \ref{lem:embedded_Lipsitz}, we have
\begin{align*}
    f(x_{k+1})
    \leq f(x_k)+\ip{g(x_k)}{-\alpha_{k-1}H_{k-1}^{-1}g_{k}}_2
    +\ip{g(x_k)}{(\alpha_{k-1}H_{k-1}^{-1}
    -\alpha_kH_k^{-1})g_{k}}_2
    +\frac{L\alpha_k^2\nu^2}{2}\norm{g_{k}}^2_2
\end{align*}
for all $k\geq 2$. From Assumption \ref{asm:main} \ref{asm:bound},
Lemma \ref{lem:linalg}, and
$\alpha_{k-1}H_{k-1}^{-1}-\alpha_kH_k^{-1}\succeq O$, we have
\begin{align*}
    f(x_{k+1})
    \leq f(x_k)+\ip{g(x_k)}{-\alpha_{k-1}H_{k-1}^{-1}g_{k}}_2
    +GB\tr(\alpha_{k-1}H_{k-1}^{-1}-\alpha_kH_k^{-1})
    +\frac{L\alpha_k^2\nu^2}{2}\norm{g_{k}}^2_2.
\end{align*}
By taking $\ex_{k}[\cdot]$ of both sides, we obtain
\begin{align*}
    \ex_{k}[f(x_{k+1})]
    &\leq f(x_k)+\ip{g(x_k)}{-\alpha_{k-1}H_{k-1}^{-1}
    \ex_{k}[g_{k}]}_2 \\
    &\quad+GB\ex_{k}
    [\tr(\alpha_{k-1}H_{k-1}^{-1}-\alpha_kH_k^{-1})]
    +\frac{L\alpha_k^2\nu^2}{2}\ex_{k}
    \left[\norm{g_{k}}^2_2\right] \\
    &\leq f(x_k)-\alpha_{k-1}\ip{g(x_k)}{H_{k-1}^{-1}g(x_k)}_2 \\
    &\quad+GB\ex_{k}
    [\tr(\alpha_{k-1}H_{k-1}^{-1}-\alpha_kH_k^{-1})]
    +\frac{L\alpha_k^2\nu^2}{2}\left(
    \frac{\sigma^2}{b_k}+\norm{g(x_k)}_2^2\right),
\end{align*}
where the first inequality comes from the independence of $H_{k-1}^{-1}$
for $\bs_k$ and the second inequality comes from Lemmas
\ref{lem:ex_bgrad} and \ref{lem:sn_bgrad}.
Here, since $H_{k-1}^{-1}\succeq\mu I_d$ and 
$\alpha_k\leq\alpha_{k-1}$, it follows that
\begin{align*}
    -\alpha_{k-1}\ip{g(x_k)}{H_{k-1}^{-1}g(x_k)}_2
    \leq -\alpha_k\mu\norm{g(x_k)}_2^2,
\end{align*}
which implies
\begin{align*}
    \ex_{k}[f(x_{k+1})]
    \leq f(x_k)
    -\alpha_k\left(\mu
    -\frac{L\alpha_k\nu^2}{2}\right)
    \norm{g(x_k)}_2^2
    +GB\ex_{k}
    [\tr(\alpha_{k-1}H_{k-1}^{-1}-\alpha_kH_k^{-1})]
    +\frac{L\alpha_k^2\sigma^2\nu^2}{2b_k}.
\end{align*}
By taking $\ex[\cdot]$ of both sides, we have
\begin{align*}
    \ex[f(x_{k+1})]\leq \ex[f(x_k)]
    -\alpha_k\left(\mu
    -\frac{L\alpha_k\nu^2}{2}\right)
    \ex\left[\norm{g(x_k)}_2^2\right]
    +GB\ex[\tr(\alpha_{k-1}H_{k-1}^{-1}-\alpha_kH_k^{-1})]
    +\frac{L\alpha_k^2\sigma^2\nu^2}{2b_k}.
\end{align*}
Rearranging the above inequality gives us
\begin{align}\label{eq:two2K}
    \alpha_k\left(\mu
    -\frac{L\alpha_k\nu^2}{2}\right)
    \ex\left[\norm{g(x_k)}_2^2\right]
    \leq\ex[f(x_k)]-\ex[f(x_{k+1})]
    +GB\ex[\tr(\alpha_{k-1}H_{k-1}^{-1}-\alpha_kH_k^{-1})]
    +\frac{L\alpha_k^2\sigma^2\nu^2}{2b_k}.
\end{align}
By summing \eqref{eq:two2K} from $k=2$ to $k=K$, we have
\begin{align*}
    &\sum_{k=2}^{K}\alpha_k\left(\mu
    -\frac{L\alpha_k\nu^2}{2}\right)
    \ex\left[\norm{g(x_k)}_2^2\right] \\
    &\quad\leq \ex[f(x_2)]-\ex[f(x_{K+1})]
    +GB\ex[\tr(\alpha_1H_1^{-1}-\alpha_KH_K^{-1})]
    +\sum_{k=2}^K\frac{L\alpha_k^2\sigma^2\nu^2}{2b_k}.
\end{align*}
Since $\mu I_d \preceq H_K^{-1} \preceq \nu I_d$ for all $k\geq 1$,
it follows that
$\tr(\alpha_1H_1^{-1})\leq \alpha_1\nu d$ and
$\tr(\alpha_KH_K^{-1})\geq 0$.
Here, we note that $\ex[f(x_{K+1})]\geq f_\star$,
from Assumption \ref{asm:main} \ref{asm:bound}.
Therefore, we have
\begin{align}\label{eq:sum_gradient_bound2}
    \sum_{k=2}^{K}\alpha_k\left(\mu
    -\frac{L\alpha_k\nu^2}{2}\right)
    \ex\left[\norm{g(x_k)}_2^2\right]
    \leq\ex[f(x_2)]-f_\star
    +GB\alpha_1\nu d
    +\sum_{k=2}^K\frac{L\alpha_k^2\sigma^2\nu^2}{2b_k}.
\end{align}
Here, by adding both sides of
\eqref{eq:sum_gradient_bound1} and \eqref{eq:sum_gradient_bound2},
we have
\begin{align*}
    \sum_{k=1}^K\alpha_k\left(\mu
    -\frac{L\alpha_k\nu^2}{2}\right)
    \ex\left[\norm{g(x_k)}_2^2\right]
    \leq \underbrace{f(x_1)-f_\star+C_0
    +GB\alpha_1\nu d}_{C_1}
    +\underbrace{\frac{L\sigma^2\nu^2}{2}}_{C_2}
    \sum_{k=1}^K \frac{\alpha_k^2}{b_k}.
\end{align*}
This completes the proof.
\end{proof}

Our convergence analysis (Theorem \ref{thm:main}) allows the proposed framework (Algorithm \ref{alg:general}) to use both constant and diminishing steps sizes.
Theorems \ref{thm:constant} and \ref{thm:diminishing} are convergence analyses of Algorithm \ref{alg:general}
with constant and diminishing steps sizes and a fixed mini-batch size,
respectively.

\begin{theorem}\label{thm:constant}
Under the assumptions in Theorem \ref{thm:main} and assuming that
the constant step size $\alpha_k:=\alpha$ satisfies $0<\alpha<2\mu L^{-1}\nu^{-2}$
and the mini-batch size $b_k := b$ satisfies $0< b \leq N$,
the sequence $(x_k)_{k=1}^{\infty}\subset M$
generated by Algorithm \ref{alg:general} satisfies
\begin{align*}
    \frac{1}{K}\sum_{k=1}^K\ex\left[\norm{\grad f(x_k)}_2^2\right]
    =\mathcal{O}\left(\frac{1}{K}+\frac{1}{b}\right).
\end{align*}
\end{theorem}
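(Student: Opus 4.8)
The plan is to derive Theorem \ref{thm:constant} as a direct specialization of Theorem \ref{thm:main}. First I would invoke Theorem \ref{thm:main}, whose hypotheses are exactly the ``assumptions in Theorem \ref{thm:main}'' cited here, so its conclusion
\begin{align*}
    \sum_{k=1}^K\alpha_k\left(\mu-\frac{L\alpha_k\nu^2}{2}\right)\ex\left[\norm{\grad f(x_k)}_2^2\right]\leq C_1+C_2\sum_{k=1}^K\frac{\alpha_k^2}{b_k}
\end{align*}
is available for some $C_1,C_2>0$. Then I would substitute the constant choices $\alpha_k:=\alpha$ and $b_k:=b$, pulling the (now $k$-independent) coefficient out of the sum on the left and replacing $\sum_{k=1}^K \alpha_k^2/b_k$ by $\alpha^2 K/b$ on the right.

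The one point that needs a word of justification is positivity of the coefficient: since $0<\alpha<2\mu L^{-1}\nu^{-2}$, we have $\mu-\tfrac{L\alpha\nu^2}{2}>0$, so $\tilde C:=\alpha\bigl(\mu-\tfrac{L\alpha\nu^2}{2}\bigr)>0$ and we may divide through by $\tilde C$ without reversing the inequality. This yields
\begin{align*}
    \sum_{k=1}^K\ex\left[\norm{\grad f(x_k)}_2^2\right]\leq\frac{C_1}{\tilde C}+\frac{C_2\alpha^2}{\tilde C}\cdot\frac{K}{b},
\end{align*}
and dividing both sides by $K$ gives
\begin{align*}
    \frac{1}{K}\sum_{k=1}^K\ex\left[\norm{\grad f(x_k)}_2^2\right]\leq\frac{C_1}{\tilde C K}+\frac{C_2\alpha^2}{\tilde C b}=\mathcal{O}\left(\frac{1}{K}+\frac{1}{b}\right),
\end{align*}
where the constants hidden in $\mathcal{O}$ depend only on $C_1,C_2,\alpha,\mu,L,\nu$ but not on $K$ or $b$. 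The chain of inequalities $\min_{k}\ex[\norm{\grad f(x_k)}_2^2]\le\frac1K\sum_k\ex[\norm{\grad f(x_k)}_2^2]$ is then immediate from the definition of the minimum.

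I do not expect any real obstacle here: the theorem is essentially a corollary, and the only thing to watch is bookkeeping of the constants and the sign condition on $\alpha$ that keeps $\tilde C$ positive (this is precisely why the hypothesis $\alpha<2\mu L^{-1}\nu^{-2}$ is imposed). If I wanted to be fully explicit I would also note, as the paper's Remark after Theorem \ref{thm:main} already does, that Algorithm \ref{alg:AMSGrad} meets the structural requirements $\alpha_kH_k^{-1}\succeq\alpha_{k+1}H_{k+1}^{-1}$ and $\mu I_d\preceq H_k^{-1}\preceq\nu I_d$ with $\mu=(B+\epsilon)^{-1}$, $\nu=\epsilon^{-1}$, so the result is non-vacuous for the concrete method proposed.
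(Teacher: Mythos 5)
Your proposal is correct and follows exactly the paper's own argument: invoke Theorem \ref{thm:main}, specialize to $\alpha_k=\alpha$ and $b_k=b$ so that $\sum_{k=1}^K\alpha_k^2/b_k=\alpha^2K/b$, and divide by the positive constant $\alpha(\mu-L\alpha\nu^2/2)=(2\alpha\mu-L\alpha^2\nu^2)/2$, whose positivity is guaranteed by the hypothesis $0<\alpha<2\mu L^{-1}\nu^{-2}$. There is no gap and no meaningful difference from the paper's proof.
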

\begin{proof}
    We denote $\grad f(x_k)$ by $g(x_k)$.
    From Theorem \ref{thm:main}, we obtain
    \begin{align}\label{eq:c_sum_grad}
        \frac{1}{K}\sum_{k=1}^K
        \alpha\left(\mu-\frac{L\alpha\nu^2}{2}\right)
        \ex\left[\norm{g(x_k)}_2^2\right]
        \leq \frac{C_1}{K}+\frac{C_2\alpha^2}{b}.
    \end{align}
    Since $0<\alpha<2\mu L^{-1}\nu^{-2}$, it follows that
    $(2\alpha\mu-L\alpha^2\nu^2)/2>0$.
    Therefore, dividing both sides of \eqref{eq:c_sum_grad}
    by $(2\alpha\mu-L\alpha^2\nu^2)/2$ gives
    \begin{align*}
        \frac{1}{K}\sum_{k=1}^K
        \ex\left[\norm{g(x_k)}_2^2\right]
        \leq\frac{2C_1}{2\alpha\mu-L\alpha^2\nu^2}\cdot\frac{1}{K}
        +\frac{2C_2\alpha^2}{2\alpha\mu-L\alpha^2\nu^2}\cdot\frac{1}{b}.
    \end{align*}
    This completes the proof.
\end{proof}

\begin{theorem}\label{thm:diminishing}
Under the assumptions in Theorem \ref{thm:main} and assuming that
the diminishing step size $\alpha_k:=\alpha/\sqrt{k}$ satisfies $\alpha\in (0,1]$
and the mini-batch size $b_k := b$ satisfies $0< b \leq N$,
the sequence $(x_k)_{k=1}^{\infty}\subset M$
generated by Algorithm \ref{alg:general} satisfies 
\begin{align*}
    \frac{1}{K}\sum_{k=1}^K\ex\left[\norm{\grad f(x_k)}_2^2\right]
    =\mathcal{O}\left(\left(1+\frac{1}{b}\right)
    \frac{\log K}{\sqrt{K}}\right).
\end{align*}
\end{theorem}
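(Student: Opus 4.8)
The plan is to specialize the master estimate of Theorem~\ref{thm:main},
\[
\sum_{k=1}^K \alpha_k\left(\mu - \frac{L\alpha_k\nu^2}{2}\right)\ex\left[\norm{\grad f(x_k)}_2^2\right] \leq C_1 + C_2\sum_{k=1}^K \frac{\alpha_k^2}{b_k},
\]
to the choice $\alpha_k = \alpha/\sqrt{k}$ and $b_k = b$. On the right-hand side this is immediate: $\sum_{k=1}^K \alpha_k^2/b_k = (\alpha^2/b)\sum_{k=1}^K 1/k \leq (\alpha^2/b)(1 + \log K)$ by the standard bound on the harmonic sum, so the right-hand side is $C_1 + (C_2\alpha^2/b)(1+\log K)$. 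All the real work is on the left-hand side, where the coefficient $\mu - L\alpha_k\nu^2/2$ need not be positive for small $k$, since the hypothesis only places $\alpha$ in $(0,1]$ and does not tie it to $\mu$, $L$, $\nu$.

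To deal with this I would introduce the $K$-independent threshold $k_0 := \lceil L^2\alpha^2\nu^4/\mu^2\rceil$, chosen so that for every $k \geq k_0$ one has $\alpha_k = \alpha/\sqrt{k} \leq \mu/(L\nu^2)$ and hence $\mu - L\alpha_k\nu^2/2 \geq \mu/2$. The first $k_0 - 1$ terms are then carried to the right-hand side; since $\alpha_k \leq \alpha \leq 1$ and $\ex[\norm{\grad f(x_k)}_2^2] \leq G^2$ by Assumption~\ref{asm:main}\ref{asm:bound}, each such term is bounded below by $-L\alpha_k^2\nu^2 G^2/2 \geq -L\alpha^2\nu^2 G^2/2$, so moving them across costs only the constant $C_3 := (k_0-1)L\alpha^2\nu^2 G^2/2$. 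This leaves
\[
\frac{\mu\alpha}{2}\sum_{k=k_0}^K \frac{1}{\sqrt{k}}\,\ex\left[\norm{\grad f(x_k)}_2^2\right] \leq C_1 + C_3 + \frac{C_2\alpha^2}{b}(1 + \log K).
\]

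Finally I would convert the $1/\sqrt{k}$-weighted sum into the uniform average. Using $1/\sqrt{k} \geq 1/\sqrt{K}$ for $k \leq K$ gives $\sum_{k=k_0}^K \ex[\norm{\grad f(x_k)}_2^2] \leq (2\sqrt{K}/(\mu\alpha))(C_1 + C_3 + (C_2\alpha^2/b)(1+\log K))$, and adding the trivial bound $\sum_{k=1}^{k_0-1}\ex[\norm{\grad f(x_k)}_2^2] \leq (k_0-1)G^2$ yields
\[
\frac{1}{K}\sum_{k=1}^K \ex\left[\norm{\grad f(x_k)}_2^2\right] \leq \frac{(k_0-1)G^2}{K} + \frac{2(C_1+C_3)}{\mu\alpha\sqrt{K}} + \frac{2C_2\alpha}{\mu b}\cdot\frac{1+\log K}{\sqrt{K}}.
\]
The first two terms are $\mathcal{O}(1/\sqrt{K})$, the third is $\mathcal{O}((\log K)/(b\sqrt{K}))$, and since $1/\sqrt{K} = \mathcal{O}((\log K)/\sqrt{K})$ the bound collapses to $\mathcal{O}((1 + 1/b)(\log K)/\sqrt{K})$, which is the claim.

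\textbf{Main obstacle.} The only non-routine point is the indefiniteness of the coefficient $\mu - L\alpha_k\nu^2/2$ for small $k$; isolating finitely many initial terms via the $K$-independent index $k_0$ resolves it, after which everything reduces to the harmonic-sum estimate $\sum_{k\leq K}1/k \leq 1+\log K$ and the monotonicity of $k \mapsto 1/\sqrt{k}$. One should simply verify that $k_0$, and hence every constant introduced, depends only on $\alpha, L, \mu, \nu, G$ and not on $K$.
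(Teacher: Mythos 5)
Your proposal is correct and follows essentially the same route as the paper's proof: specialize Theorem \ref{thm:main}, isolate the finitely many initial indices where $\mu - L\alpha_k\nu^2/2$ may be nonpositive via a $K$-independent threshold $k_0$, bound the harmonic sum by $1+\log K$, and use the monotonicity of $\alpha_k$ to pass to the uniform average. The only (harmless) differences are that you make $k_0$ explicit and absorb the initial terms using the gradient bound $G^2$ from Assumption \ref{asm:main}\ref{asm:bound}, whereas the paper keeps those finitely many expectations inside its constant.
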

\begin{proof}
    We denote $\grad f(x_k)$ by $g(x_k)$.
    Since $(\alpha_k)_{k=1}^{\infty}$ satisfies
    $\alpha_k\to0$ $(k\to\infty)$,
    there exists a natural number $k_0\geq 1$
    such that, for all $k\geq 1$,
    if $k\geq k_0$, then $0<\alpha_k<2\mu L^{-1}\nu^{-2}$.
    Therefore, we obtain
    \begin{align*}
        0<\mu-\frac{L\alpha_k\nu^2}{2}<\mu,
    \end{align*}
    for all $k\geq k_0$.
    From Theorem \ref{thm:main}, we have
    \begin{align*}
        \sum_{k=k_0}^K\alpha_k\left(\mu
        -\frac{L\alpha_k\nu^2}{2}\right)
        \ex\left[\norm{g(x_k)}_2^2\right]
        \leq C_1+\frac{C_2}{b}\sum_{k=1}^K\alpha_k^2
        -\sum_{k=1}^{k_0-1}\alpha_k\left(\mu
        -\frac{L\alpha_k\nu^2}{2}\right)
        \ex\left[\norm{g(x_k)}_2^2\right],
    \end{align*}
    for all $K\geq k_0$.
    Since $(\alpha_k)_{k=1}^\infty$ is monotone decreasing and
    $\alpha_k>0$, we obtain
    \begin{align*}
        \alpha_K\left(\mu-\frac{L\alpha_{k_0}\nu^2}{2}\right)
        \sum_{k=k_0}^K\ex\left[\norm{g(x_k)}_2^2\right]
        \leq C_1+\frac{C_2}{b}\sum_{k=1}^K\alpha_k^2
        +\sum_{k=1}^{k_0-1}L\alpha_k^2\nu^2
        \ex\left[\norm{g(x_k)}_2^2\right].
    \end{align*}
    Dividing both sides of this inequality by
    $2^{-1}K\alpha_K(2\mu-L\alpha_{k_0}\nu^2)>0$ yields
    \begin{align*}
        \frac{1}{K}\sum_{k=k_0}^K
        \ex\left[\norm{g(x_k)}_2^2\right]
        &\leq\frac{2}{K\alpha_K(2\mu-L\alpha_{k_0}\nu^2)}
        \left(C_1+\frac{C_2}{b}\sum_{k=1}^K\alpha_k^2
        +\sum_{k=1}^{k_0-1}L\alpha_k^2\nu^2
        \ex\left[\norm{g(x_k)}_2^2\right]\right)\\
        &=\frac{1}{K\alpha_K}\cdot
        \underbrace{\frac{2}{2\mu-L\alpha_{k_0}\nu^2}
        \left(C_1+\sum_{k=1}^{k_0-1}L\alpha_k^2\nu^2
        \ex\left[\norm{g(x_k)}_2^2\right]\right)}_{C_3}\\
        &\quad+\frac{1}{bK\alpha_K}\cdot
        \underbrace{\frac{2C_2}{2\mu-L\alpha_{k_0}\nu^2}}_{C_4}
        \sum_{k=1}^K\alpha_k^2.
    \end{align*}
    From this and $\alpha_K:=\alpha/\sqrt{K}<1$, we obtain
    \begin{align*}
        \frac{1}{K}\sum_{k=1}^K\ex\left[\norm{g(x_k)}_2^2\right]
        &\leq\frac{1}{K\alpha_K}\left(C_3+
        \frac{C_4}{b}\sum_{k=1}^K\alpha_k^2
        \right)+\frac{1}{K\alpha_K}\sum_{k=1}^{k_0-1}
        \ex\left[\norm{g(x_k)}_2^2\right] \\
        &=\frac{1}{\alpha\sqrt{K}}\left(C_3+\sum_{k=1}^{k_0-1}
        \ex\left[\norm{g(x_k)}_2^2\right]+
        \frac{C_4}{b}\sum_{k=1}^K\alpha_k^2\right).
    \end{align*}
    From $\alpha\in(0,1]$, we have that
    \begin{align*}
        \sum_{k=1}^K\alpha_k^2=\sum_{k=1}^K\frac{\alpha^2}{k}
        \leq \sum_{k=1}^K\frac{1}{k}
        \leq 1+\int_1^K\frac{dt}{t}=1+\log K.
    \end{align*}
    Therefore,
    \begin{align*}
        \frac{1}{K}\sum_{k=1}^K\ex\left[\norm{g(x_k)}_2^2\right]
        \leq\frac{1}{\alpha\sqrt{K}}\left(C_3+\sum_{k=1}^{k_0-1}
        \ex\left[\norm{g(x_k)}_2^2\right]+
        \frac{C_4}{b}+\frac{C_4}{b}\log K\right).
    \end{align*}
    This completes the proof.
\end{proof}

Theorems \ref{thm:constant_1} and \ref{thm:diminishing_1} are convergence analyses of Algorithm \ref{alg:general}
with constant and diminishing steps sizes and increasing mini-batch sizes,
respectively.

\begin{theorem}\label{thm:constant_1}
Under the assumptions in Theorem \ref{thm:main} and assuming that
the constant step size $\alpha_k:=\alpha$ satisfies $0<\alpha<2\mu L^{-1}\nu^{-2}$
and the mini-batch size $b_k$ satisfies $0< b_k \leq N$ and $\sum_{k=1}^\infty b_k^{-1} < \infty$,
the sequence $(x_k)_{k=1}^{\infty}\subset M$
generated by Algorithm \ref{alg:general} satisfies
\begin{align*}
    \min_{k=1,\ldots, K} \ex\left[\norm{\grad f(x_k)}_2^2\right] \leq \frac{1}{K}\sum_{k=1}^K\ex\left[\norm{\grad f(x_k)}_2^2\right]
    =\mathcal{O}\left(\frac{1}{K}\right).
\end{align*}
\end{theorem}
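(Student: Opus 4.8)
The plan is to feed the constant step size directly into Theorem \ref{thm:main} and then use the summability hypothesis $\sum_{k=1}^\infty b_k^{-1} < \infty$ to collapse the accumulated variance term into a constant independent of $K$; the $\mathcal{O}(1/K)$ rate then drops out after dividing by $K$, exactly as in the proof of Theorem \ref{thm:constant} but without the residual $1/b$ term.

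Concretely, I would first substitute $\alpha_k := \alpha$ into the bound from Theorem \ref{thm:main}, so that
\begin{align*}
\alpha\left(\mu-\frac{L\alpha\nu^2}{2}\right)\sum_{k=1}^K\ex\left[\norm{\grad f(x_k)}_2^2\right]\leq C_1+C_2\alpha^2\sum_{k=1}^K\frac{1}{b_k},
\end{align*}
and observe, as in Theorem \ref{thm:constant}, that $0<\alpha<2\mu L^{-1}\nu^{-2}$ forces $2\alpha\mu-L\alpha^2\nu^2>0$. Next I would use $\sum_{k=1}^K b_k^{-1}\leq\sum_{k=1}^\infty b_k^{-1}=:S<\infty$ to replace the $K$-dependent sum on the right by the finite constant $S$, yielding
\begin{align*}
\sum_{k=1}^K\ex\left[\norm{\grad f(x_k)}_2^2\right]\leq\frac{2\left(C_1+C_2\alpha^2 S\right)}{2\alpha\mu-L\alpha^2\nu^2},
\end{align*}
a bound that does not depend on $K$. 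Dividing through by $K$ gives the $\mathcal{O}(1/K)$ estimate for the running average, and since $\min_{k=1,\ldots,K}\ex[\norm{\grad f(x_k)}_2^2]$ is always at most this average, the stated chain of inequalities follows.

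Honestly, this statement is essentially a corollary of Theorem \ref{thm:main}, so there is no real obstacle in the calculation; the only point worth flagging is conceptual rather than technical — namely that the hypothesis $\sum_{k=1}^\infty b_k^{-1}<\infty$ is precisely what is needed to turn the otherwise non-vanishing term $C_2\sum_k \alpha^2/b_k$ of Theorem \ref{thm:constant} into a uniformly bounded quantity, thereby recovering genuine convergence to a stationary point. I would also note that no lower bound on $b_k$ beyond positivity is required; only a sufficiently fast growth (for instance $b_k$ of order $k^{1+\delta}$ for some $\delta>0$, or geometric growth $b_k\propto r^k$ with $r>1$) is needed to ensure summability, which is what makes the practical "periodically increase the batch size" strategy mentioned in the introduction legitimate.
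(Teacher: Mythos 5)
Your proposal is correct and follows essentially the same route as the paper: substitute the constant step size into Theorem \ref{thm:main}, use $\sum_{k=1}^{K} b_k^{-1}\leq\sum_{k=1}^{\infty}b_k^{-1}<\infty$ to make the variance term $K$-independent, and divide by the positive factor $(2\alpha\mu-L\alpha^2\nu^2)/2$ and by $K$. The only cosmetic difference is that you bound the partial sum by its limit before dividing by $K$, whereas the paper divides first and leaves the boundedness of $\frac{1}{K}\sum_{k=1}^K b_k^{-1}\cdot K^{-1}$-type term implicit.
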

\begin{proof}
    We denote $\grad f(x_k)$ by $g(x_k)$.
    From Theorem \ref{thm:main}, we obtain
    \begin{align}\label{eq:c_sum_grad_1}
        \frac{1}{K} \sum_{k=1}^K
        \alpha\left(\mu-\frac{L\alpha\nu^2}{2}\right)
        \ex\left[\norm{g(x_k)}_2^2\right]
        \leq \frac{C_1}{K} + \frac{C_2\alpha^2}{K} \sum_{k=1}^K \frac{1}{b_k}.
    \end{align}
    Since $0<\alpha<2\mu L^{-1}\nu^{-2}$, it follows that
    $(2\alpha\mu-L\alpha^2\nu^2)/2>0$.
    Therefore, dividing both sides of \eqref{eq:c_sum_grad_1}
    by $(2\alpha\mu-L\alpha^2\nu^2)/2$ gives
    \begin{align*}
        \frac{1}{K}\sum_{k=1}^K
        \ex\left[\norm{g(x_k)}_2^2\right]
        \leq\frac{2C_1}{2\alpha\mu-L\alpha^2\nu^2}\cdot\frac{1}{K}
        +\frac{2C_2\alpha^2}{2\alpha\mu-L\alpha^2\nu^2} \cdot\frac{1}{K}\sum_{k=1}^K \frac{1}{b_k}.
    \end{align*}
    This completes the proof.
\end{proof}

An example of $b_k$ such that $\sum_{k=1}^\infty b_k^{-1} < \infty$ used in Theorem \ref{thm:constant_1} is such that the mini-batch size is multiplied by $\delta > 1$ every step, i.e., 
\begin{align}\label{exponential_bs}
    b_t = \delta^t b_0.
\end{align}
For example, the mini-batch size defined by \eqref{exponential_bs} with $\delta = 2$ makes batch size double each step.
Moreover, the mini-batch size defined by \eqref{exponential_bs} and the diminishing step size $\alpha_k := \alpha/\sqrt{k}$, where $\alpha \in (0,1]$, satisfy that $\sum_{k=1}^\infty \alpha_k^2 b_k^{-1} < \infty$ (see the assumption in Theorem \ref{thm:diminishing_1}).

\begin{theorem}\label{thm:diminishing_1}
Under the assumptions in Theorem \ref{thm:main} and assuming that
the diminishing step size $\alpha_k:=\alpha/\sqrt{k}$ satisfies $\alpha\in (0,1]$
and the mini-batch size $b_k$ satisfies $0< b_k \leq N$ and $\sum_{k=1}^\infty \alpha_k^2 b_k^{-1} < \infty$,
the sequence $(x_k)_{k=1}^{\infty}\subset M$
generated by Algorithm \ref{alg:general} satisfies 
\begin{align*}
    \min_{k=1,\ldots, K} \ex\left[\norm{\grad f(x_k)}_2^2\right] \leq \frac{1}{\sum_{k=1}^K \alpha_k}\sum_{k=1}^K \alpha_k \ex\left[\norm{\grad f(x_k)}_2^2\right]
    =\mathcal{O}\left(
    \frac{1}{\sqrt{K}}\right).
\end{align*}
\end{theorem}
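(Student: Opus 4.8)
The plan is to start from the master inequality of Theorem \ref{thm:main} and turn the $\alpha_k\bigl(\mu-L\alpha_k\nu^2/2\bigr)$-weighted sum of squared gradient norms into the $\alpha_k$-weighted average appearing in the statement. Since $\alpha_k=\alpha/\sqrt{k}\to 0$, there is an index $k_0\geq 1$ with $\mu-L\alpha_k\nu^2/2\geq \mu/2>0$ for all $k\geq k_0$. Restricting the sum in Theorem \ref{thm:main} to $k\geq k_0$, transferring the finitely many terms $k=1,\dots,k_0-1$ to the right-hand side, and using $-\alpha_k(\mu-L\alpha_k\nu^2/2)\leq L\alpha_k^2\nu^2/2$ on those transferred terms, I would obtain
\begin{align*}
\frac{\mu}{2}\sum_{k=k_0}^K \alpha_k \ex\left[\norm{\grad f(x_k)}_2^2\right]
\leq C_1 + C_2\sum_{k=1}^K\frac{\alpha_k^2}{b_k}
+ \sum_{k=1}^{k_0-1}\frac{L\alpha_k^2\nu^2}{2}\ex\left[\norm{\grad f(x_k)}_2^2\right].
\end{align*}

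Next I would absorb every $K$-independent quantity into a single constant. By Assumption \ref{asm:main} \ref{asm:bound}, $\ex[\norm{\grad f(x_k)}_2^2]\leq G^2$, so the last sum is at most $\tfrac{L\nu^2 G^2}{2}\sum_{k=1}^{k_0-1}\alpha_k^2$, which is finite; and by hypothesis $\sum_{k=1}^K \alpha_k^2 b_k^{-1}\leq\sum_{k=1}^\infty \alpha_k^2 b_k^{-1}<\infty$. Hence the right-hand side is bounded above by a constant $C_5>0$ independent of $K$, giving $\sum_{k=k_0}^K \alpha_k \ex[\norm{\grad f(x_k)}_2^2]\leq 2C_5/\mu$. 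Adding back the omitted terms, which satisfy $\sum_{k=1}^{k_0-1}\alpha_k\ex[\norm{\grad f(x_k)}_2^2]\leq G^2\sum_{k=1}^{k_0-1}\alpha_k=:C_6$, yields $\sum_{k=1}^K \alpha_k \ex[\norm{\grad f(x_k)}_2^2]\leq 2C_5/\mu+C_6=:C_7$ for all $K\geq k_0$.

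Finally I would divide by $\sum_{k=1}^K\alpha_k$ and use the lower bound $\sum_{k=1}^K \alpha_k=\alpha\sum_{k=1}^K k^{-1/2}\geq \alpha\int_1^{K+1} t^{-1/2}\,dt=2\alpha(\sqrt{K+1}-1)\geq \alpha\sqrt{K}$ for all sufficiently large $K$, obtaining
\begin{align*}
\frac{1}{\sum_{k=1}^K\alpha_k}\sum_{k=1}^K\alpha_k\ex\left[\norm{\grad f(x_k)}_2^2\right]
\leq \frac{C_7}{\alpha\sqrt{K}}=\mathcal{O}\left(\frac{1}{\sqrt{K}}\right),
\end{align*}
and the inequality $\min_{k=1,\dots,K}\ex[\norm{\grad f(x_k)}_2^2]\leq \frac{1}{\sum_{k=1}^K\alpha_k}\sum_{k=1}^K\alpha_k\ex[\norm{\grad f(x_k)}_2^2]$ is immediate since a weighted average with positive weights dominates the minimum. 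The only slightly delicate points are (i) ensuring the early terms $k<k_0$, where $\mu-L\alpha_k\nu^2/2$ may be nonpositive, contribute only a bounded additive constant, and (ii) exploiting the summability hypothesis $\sum_{k}\alpha_k^2 b_k^{-1}<\infty$ in place of the $\log K$ bound used in Theorem \ref{thm:diminishing} to keep the stochastic-variance term $\mathcal{O}(1)$; this is precisely what upgrades the rate from $\mathcal{O}(\log K/\sqrt{K})$ to $\mathcal{O}(1/\sqrt{K})$. Beyond this bookkeeping of constants, there is no substantial obstacle.
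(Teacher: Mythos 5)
Your proposal is correct and follows essentially the same route as the paper's proof: split the sum at an index $k_0$ where the coefficient $\mu-L\alpha_k\nu^2/2$ becomes positive, absorb the finitely many early terms and the summable variance term $\sum_k\alpha_k^2b_k^{-1}$ into a $K$-independent constant, and divide by $\sum_{k=1}^K\alpha_k\geq 2\alpha(\sqrt{K+1}-1)$. The only cosmetic differences are that you lower-bound the coefficient by $\mu/2$ rather than $\mu-L\alpha_{k_0}\nu^2/2$ and invoke the bound $\ex[\norm{\grad f(x_k)}_2^2]\leq G^2$ for the early terms where the paper simply keeps them as finite constants.
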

\begin{proof}
    We denote $\grad f(x_k)$ by $g(x_k)$.
    Since $(\alpha_k)_{k=1}^{\infty}$ satisfies
    $\alpha_k\to0$ $(k\to\infty)$,
    there exists a natural number $k_0\geq 1$
    such that, for all $k\geq 1$,
    if $k\geq k_0$, then $0<\alpha_k<2\mu L^{-1}\nu^{-2}$.
    Therefore, we obtain
    \begin{align*}
        0<\mu-\frac{L\alpha_k\nu^2}{2}<\mu,
    \end{align*}
    for all $k\geq k_0$.
    From Theorem \ref{thm:main}, we have
    \begin{align*}
        \sum_{k=k_0}^K\alpha_k\left(\mu
        -\frac{L\alpha_k\nu^2}{2}\right)
        \ex\left[\norm{g(x_k)}_2^2\right]
        \leq C_1+C_2\sum_{k=1}^K \frac{\alpha_k^2}{b_k}
        -\sum_{k=1}^{k_0-1}\alpha_k\left(\mu
        -\frac{L\alpha_k\nu^2}{2}\right)
        \ex\left[\norm{g(x_k)}_2^2\right],
    \end{align*}
    for all $K\geq k_0$.
    Since $(\alpha_k)_{k=1}^\infty$ is monotone decreasing and
    $\alpha_k>0$, we obtain
    \begin{align*}
        \left(\mu-\frac{L\alpha_{k_0}\nu^2}{2}\right)
        \sum_{k=k_0}^K \alpha_k \ex\left[\norm{g(x_k)}_2^2\right]
        \leq C_1+ C_2 \sum_{k=1}^K \frac{\alpha_k^2}{b_k}
        +\sum_{k=1}^{k_0-1}L\alpha_k^2\nu^2
        \ex\left[\norm{g(x_k)}_2^2\right].
    \end{align*}
    Dividing both sides of this inequality by
    $2^{-1} (2\mu-L\alpha_{k_0}\nu^2)>0$ yields
    \begin{align*}
        \sum_{k=k_0}^K
        \alpha_k \ex \left[\norm{g(x_k)}_2^2\right]
        &\leq\frac{2}{2\mu-L\alpha_{k_0}\nu^2}
        \left(C_1+C_2\sum_{k=1}^K \frac{\alpha_k^2}{b_k}
        +\sum_{k=1}^{k_0-1}L\alpha_k^2\nu^2
        \ex\left[\norm{g(x_k)}_2^2\right]\right)\\
        &=
        \underbrace{\frac{2}{2\mu-L\alpha_{k_0}\nu^2}
        \left(C_1+\sum_{k=1}^{k_0-1}L\alpha_k^2\nu^2
        \ex\left[\norm{g(x_k)}_2^2\right]\right)}_{C_3}\\
        &\quad+
        \underbrace{\frac{2C_2}{2\mu-L\alpha_{k_0}\nu^2}}_{C_4}
        \sum_{k=1}^K \frac{\alpha_k^2}{b_k}.
    \end{align*}
    From this and $\sum_{k=1}^\infty\alpha_k^2{b_k}^{-1}<\infty$, we have that 
    \begin{align*}
        \sum_{k=1}^K \alpha_k \ex\left[\norm{g(x_k)}_2^2\right]
        &\leq \underbrace{C_3+\sum_{k=1}^{k_0-1}
        \alpha_k \ex\left[\norm{g(x_k)}_2^2\right]
        +
        C_4\sum_{k=1}^K \frac{\alpha_k^2}{b_k}}_{C_5},
    \end{align*}
    which implies that 
    \begin{align*}
        \frac{1}{\sum_{k=1}^K \alpha_k} \sum_{k=1}^K \alpha_k \ex\left[\norm{g(x_k)}_2^2\right]
        &\leq \frac{C_5}{\sum_{k=1}^K \alpha_k}.
    \end{align*}
    From $\alpha_k :=\alpha/\sqrt{k}$, we obtain
    \begin{align*}
       \sum_{k=1}^K\alpha_k =\sum_{k=1}^K\frac{\alpha}{\sqrt{k}}
        \geq \alpha \int_1^{K+1} \frac{dt}{\sqrt{t}}=2\alpha (\sqrt{K+1} - 1).
    \end{align*}
    Therefore,
    \begin{align*}
        \frac{1}{\sum_{k=1}^K \alpha_k}\sum_{k=1}^K \alpha_k \ex\left[\norm{g(x_k)}_2^2\right]
        \leq \frac{C_5}{2\alpha (\sqrt{K+1} - 1)}.
    \end{align*}
    This completes the proof.
\end{proof}

\section{Numerical experiments}
We experimentally compared our general framework of Riemannian adaptive optimization
methods (Algorithms \ref{alg:general}) with several
choices of $(\phi_n)_{n=1}^\infty$ and $(\psi_n)_{n=1}^\infty$
with the following algorithms: 
\begin{itemize}
    \item RSGD \citep{bonnabel2013stochastic}:
        Algorithm \ref{alg:general} with
        $\phi_k(g_1,\ldots,g_k)=g_k$ and
        $\psi_k(g_1,\ldots,g_k)=I_d$.
    \item RASA-LR, RASA-L, RASA-R \citep[Algorithm 1]{kasai2019riemannian}:
        $\beta=0.99$.
    \item RAdam: Algorithm \ref{alg:general}
        with $(\phi_n)_{n=1}^\infty$ defined by \eqref{eq:Adam-mk},
        $(\psi_n)_{n=1}^\infty$ defined by \eqref{eq:Adam-Hk},
        $\beta_1=0.9$, $\beta_2 = 0.999$ and $\epsilon=10^{-8}$.
    \item RAMSGrad: Algorithm \ref{alg:AMSGrad} with
        $\beta_1=0.9$, $\beta_2 = 0.999$ and $\epsilon=10^{-8}$.
\end{itemize}
We experimented with both constant and diminishing step sizes.
For each algorithm, we searched in the set $\{10^{-1},10^{-2},\ldots, 10^{-8}\}$
for the best initial step size $\alpha$ (both constant and diminishing).
Note that the constant (resp. diminishing) step size was determined
to be $\alpha_k=\alpha$ (resp. $\alpha_k=\alpha/\sqrt{k}$)
for all $k\geq 1$.
For each experiment, each data set was split into a training set and a test set.
The experiments used a MacBook Air (M1, 2020)
and macOS Monterey version 12.2 operating system.
The algorithms were written in Python 3.12.1 with the NumPy 1.26.0
package and the Matplotlib 3.9.1 package.
The Python implementations of the methods used in the numerical
experiments are available at
\url{https://github.com/iiduka-researches/202408-adaptive}. 

\subsection{Principal component analysis}
We applied the algorithms to a principal component
analysis (PCA) problem \citep{kasai2018riemannian, roy2018geometry}.
For $N$ given data points $x_1,\ldots,x_N\in\real^n$ and $p$ $(\leq n)$,
the PCA problem is equivalent to minimizing
\begin{align}\label{eq:pca}
    f(U):=\frac{1}{N}\sum_{i=1}^N\norm{x_i-UU^\top x_i}_2^2,
\end{align}
on the Stiefel manifold $\St(p,n)$.
Therefore, the PCA problem can be considered to be optimization problem
on the Stiefel manifold.
In the experiments, we set $p$ to $10$ and the batch size $b$ to $2^{10}$.
We used the QR-based retraction on the Stiefel manifold $\St(p,n)$
\citep[Example 4.1.3]{absil2008optimization}, which is defined by
\begin{align*}
    R_X(\eta):=\qf(X+\eta),
\end{align*}
for $X\in\St(p,n)$ and $\eta\in T_X\St(p,n)$,
where $\qf(\cdot)$ returns the $Q$-factor of the QR decomposition.

We evaluated the algorithms on training images of the MNIST dataset
\citep{lecun1998gradient} and the COIL100 dataset \citep{nene1996columbia}.
The MNIST dataset contains $28\times 28$ gray-scale images of handwritten digits. It has a training set of 60,000 images and a test set of 10,000 images.
We transformed every image into a 784-dimensional vector
and normalized its pixel values to lie in the range of $[0,1]$.
Thus, we set $N=60000$ and $n=784$.
The COIL100 dataset contains 7,200 normalized color camera images
of the 100 objects taken from different angles.
As in the previous study \citep{kasai2019riemannian},
we resized them to 32$\times$32 pixels.
Moreover, we split them into an $80\%$ training set and a $20\%$ test set.
Thus, we set $N=5760$ and $n=1024$.

Figure \ref{fig:mnist-fv-c} (resp. Figure \ref{fig:mnist-fv-d}) shows the performances of the algorithms with a constant (resp. diminishing) step size for the objective function values defined by \eqref{eq:pca} with respect to the number of iterations on the
training set of the MNIST dataset,
while Figures \ref{fig:mnist-fv-c-test} and \ref{fig:mnist-fv-d-test}
present those on the test set.
Moreover, Figures \ref{fig:coil-fv-c} and \ref{fig:coil-fv-d} show those on the training set of the COIL100 dataset, while
Figures \ref{fig:coil-fv-c-test} and \ref{fig:coil-fv-d-test}
present those on the test set.
Figure \ref{fig:mnist-gn-c} (resp. \ref{fig:mnist-gn-d}) presents the performances of the algorithms with a constant (resp. diminishing) step size for the norm of the gradient of objective function defined by \eqref{eq:pca} with respect to the number of iterations on the
training set of the MNIST dataset,
while Figures \ref{fig:mnist-gn-c-test} and \ref{fig:mnist-gn-d-test}
present those on the test set of the MNIST dataset.
Moreover, Figures \ref{fig:coil-gn-c} and \ref{fig:coil-gn-d} show those on the training set of the COIL100 dataset,
Figures \ref{fig:coil-gn-c-test} and \ref{fig:coil-gn-d-test}
present those on the test set.
The experiments were performed for three random initial points, and the thick line plots the average of all experiments. The area bounded by the maximum and minimum values is painted the same color as the corresponding line.
Note that the upper part of Figures \ref{fig:mnist-gn-c}, \ref{fig:mnist-gn-d}, \ref{fig:mnist-gn-c-test} and \ref{fig:mnist-gn-d-test} are cut off. The initial values of the gradient norm in these figures are approximately 38, highlighting that the gradient norm was minimized during the optimization process. Furthermore, our results differ from those presented in \citet{kasai2019riemannian} because of three key factors. In \citet{kasai2019riemannian}, the mini-batch size was fixed at 10, while in our experiments we used $2^{10}$. Furthermore, the scope of the grid search differs from that of \citet{kasai2019riemannian}.
Moreover, we experimented with a doubly increasing batch size every 100 steps from an initial batch size of $b_1=2^7$ and constant and diminishing step sizes.
The results are given in Appendix \ref{apx:inc}.

\begin{figure}[htbp]
\centering
\subfigure[constant learning rate]{
    \includegraphics[clip, width=0.4\columnwidth]{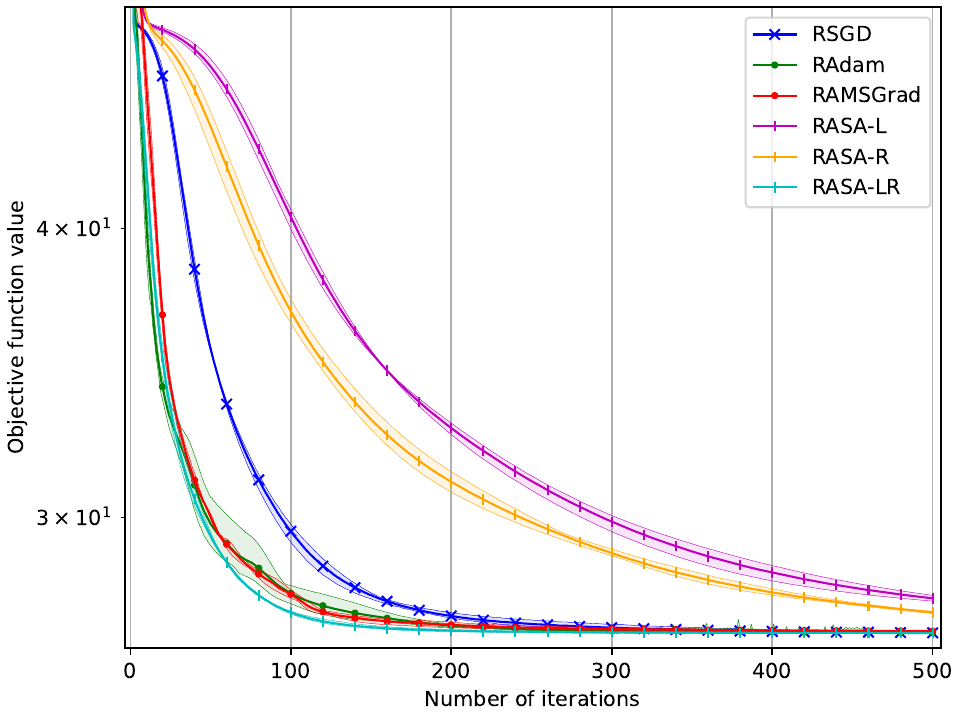}
    \label{fig:mnist-fv-c}
}    
\subfigure[diminishing learning rate]{
    \includegraphics[clip, width=0.4\columnwidth]{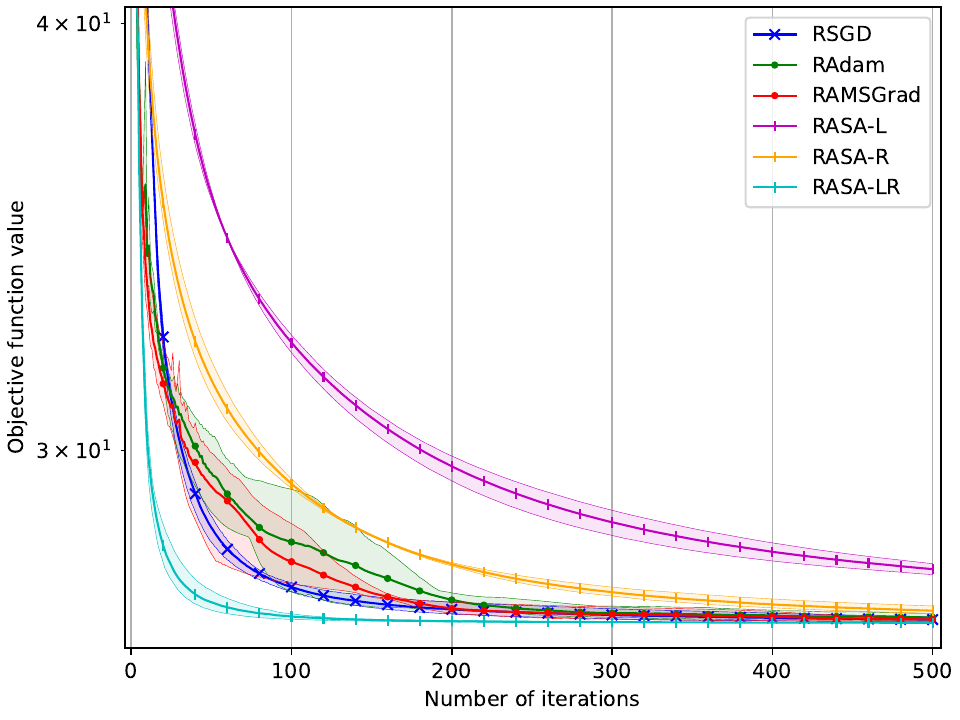}
    \label{fig:mnist-fv-d}
}
\caption{Objective function value defined by \eqref{eq:pca} versus number of iterations on the training set of the MNIST datasets.}
\end{figure}

\begin{figure}[htbp]
\centering
\subfigure[constant learning rate]{
    \includegraphics[clip, width=0.4\columnwidth]{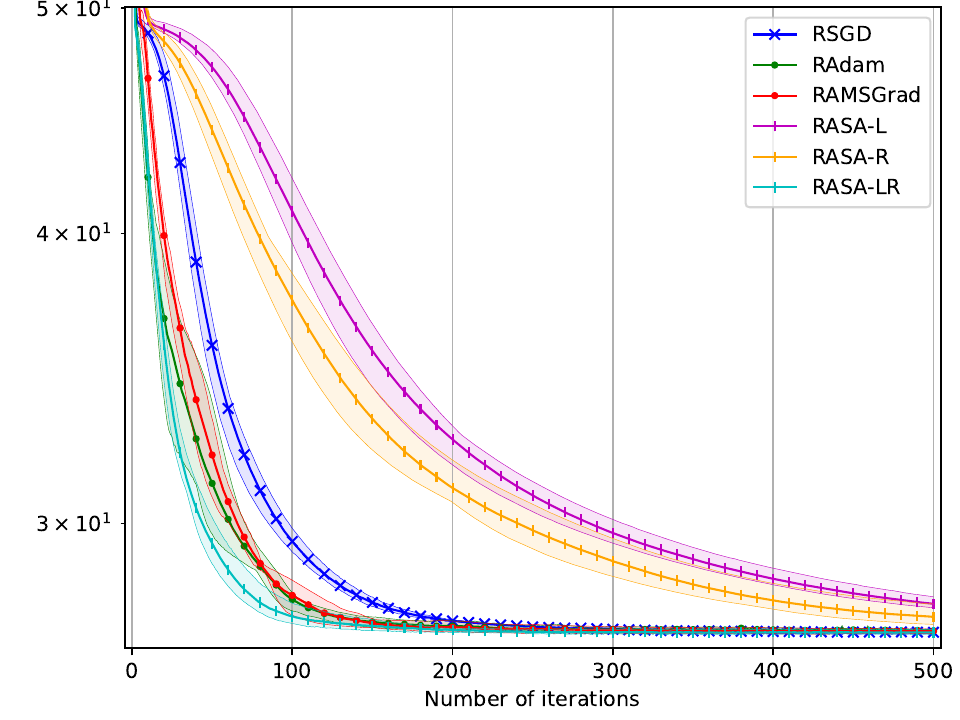}
    \label{fig:mnist-fv-c-test}
}    
\subfigure[diminishing learning rate]{
    \includegraphics[clip, width=0.4\columnwidth]{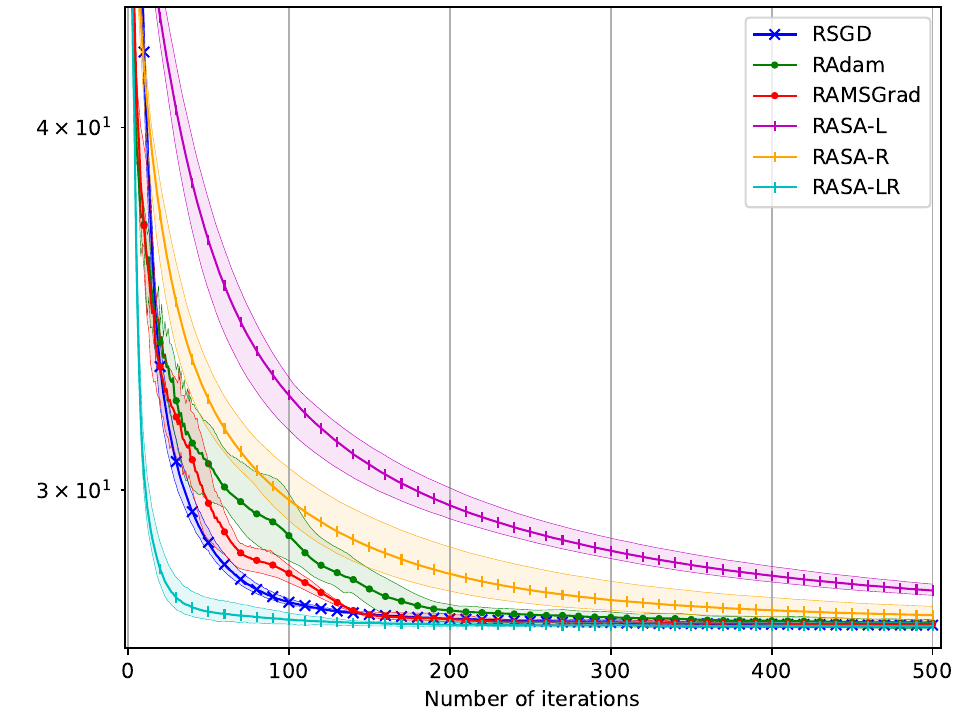}
    \label{fig:mnist-fv-d-test}
}
\caption{Objective function value defined by \eqref{eq:pca} versus number of iterations on the test set of the MNIST datasets.}
\end{figure}

\begin{figure}[htbp]
\centering
\subfigure[constant learning rate]{
    \includegraphics[clip, width=0.4\columnwidth]{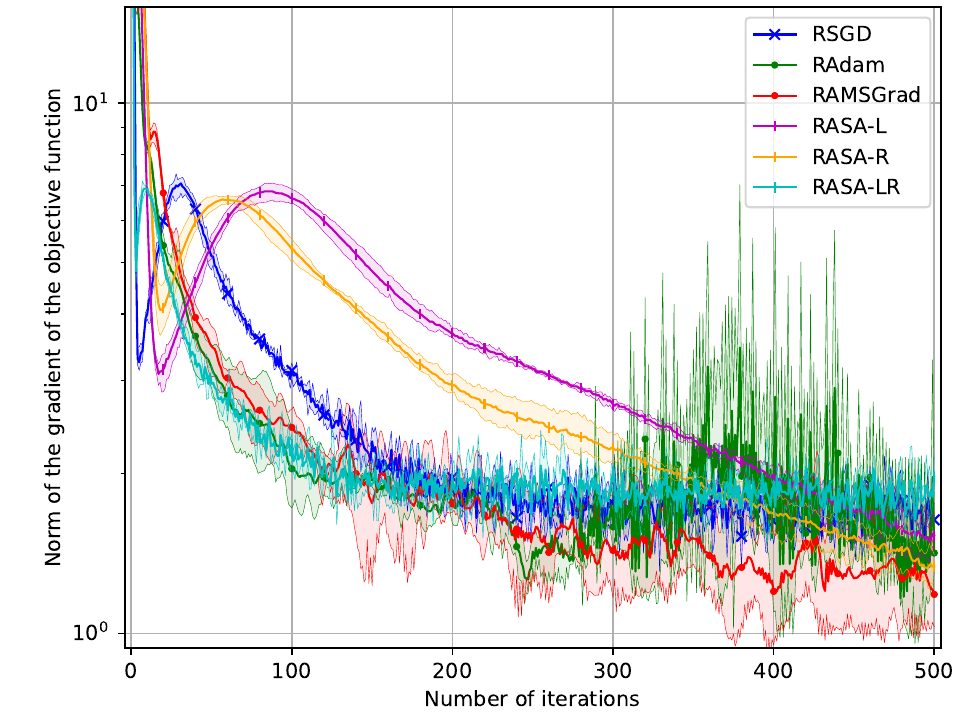}
    \label{fig:mnist-gn-c}
}    
\subfigure[diminishing learning rate]{
    \includegraphics[clip, width=0.4\columnwidth]{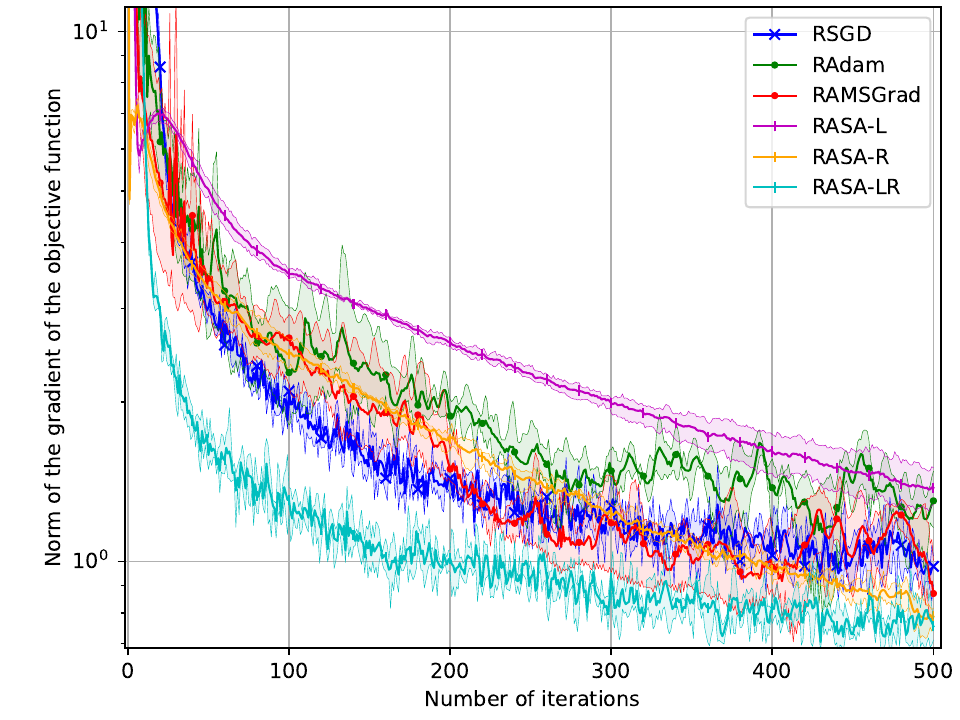}
    \label{fig:mnist-gn-d}
}
\caption{Norm of the gradient of objective function defined by \eqref{eq:pca} versus number of iterations on the training set of the MNIST datasets.}
\end{figure}

\begin{figure}[htbp]
\centering
\subfigure[constant learning rate]{
    \includegraphics[clip, width=0.4\columnwidth]{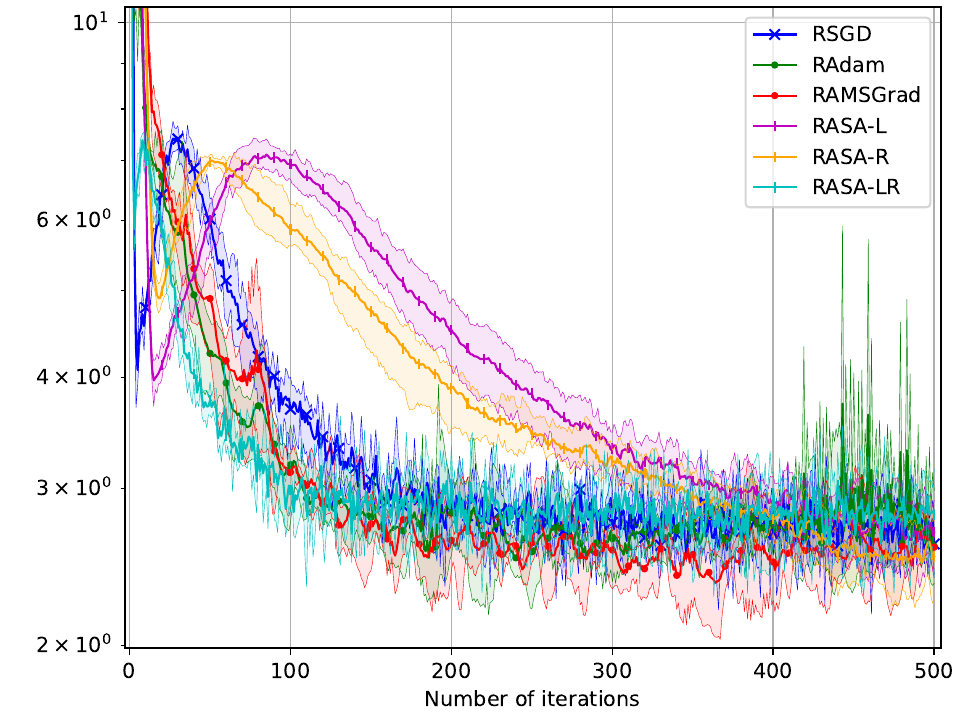}
    \label{fig:mnist-gn-c-test}
}    
\subfigure[diminishing learning rate]{
    \includegraphics[clip, width=0.4\columnwidth]{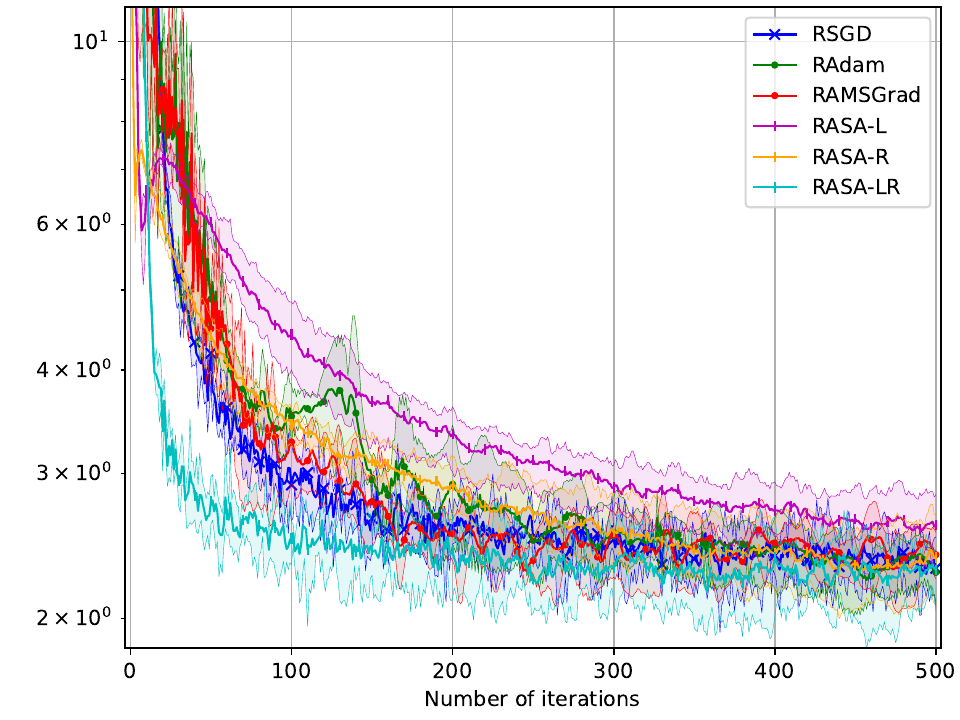}
    \label{fig:mnist-gn-d-test}
}
\caption{Norm of the gradient of objective function defined by \eqref{eq:pca} versus number of iterations on the test set of the MNIST datasets.}
\end{figure}

\begin{figure}[htbp]
\centering
\subfigure[constant learning rate]{
    \includegraphics[clip, width=0.4\columnwidth]{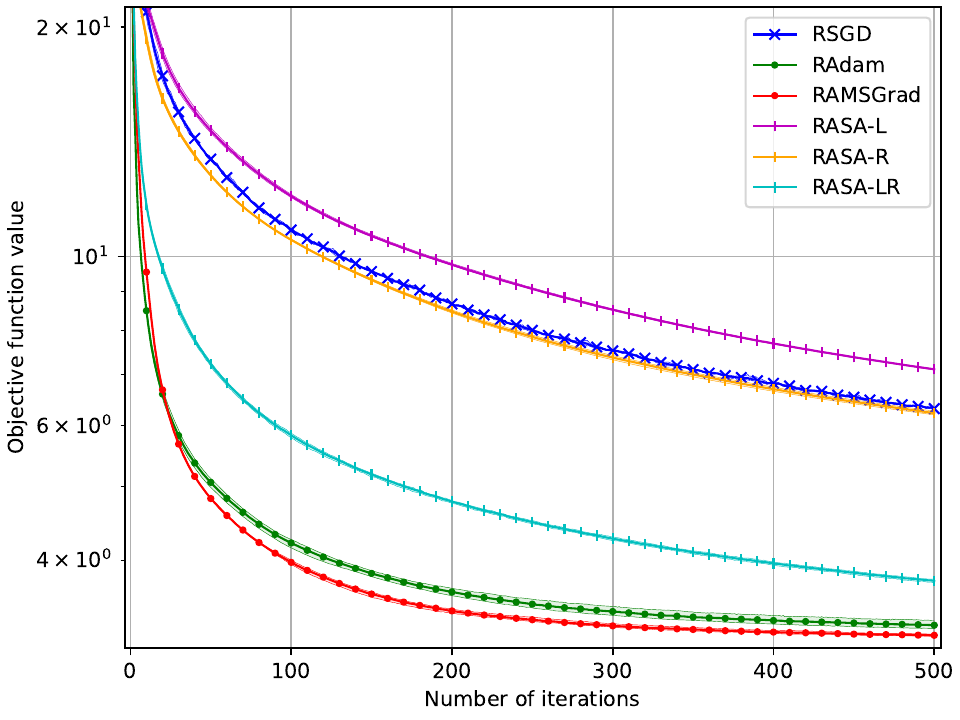}
    \label{fig:coil-fv-c}
}    
\subfigure[diminishing learning rate]{
    \includegraphics[clip, width=0.4\columnwidth]{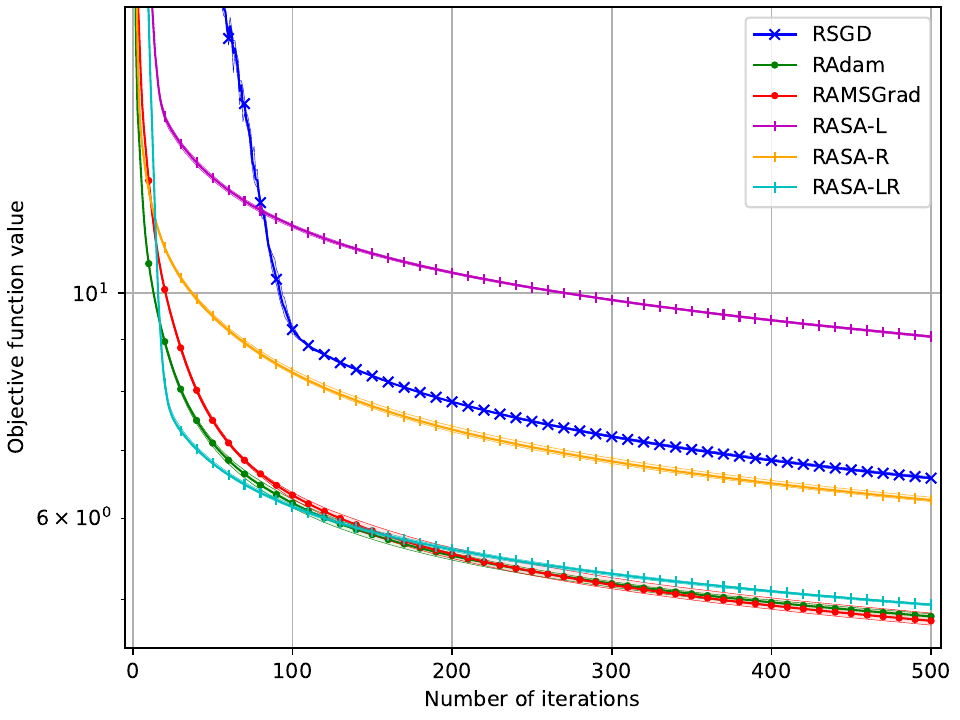}
    \label{fig:coil-fv-d}
}
\caption{Objective function value defined by \eqref{eq:pca} versus number of iterations on the training set of the COIL100 datasets.}
\end{figure}

\begin{figure}[htbp]
\centering
\subfigure[constant learning rate]{
    \includegraphics[clip, width=0.4\columnwidth]{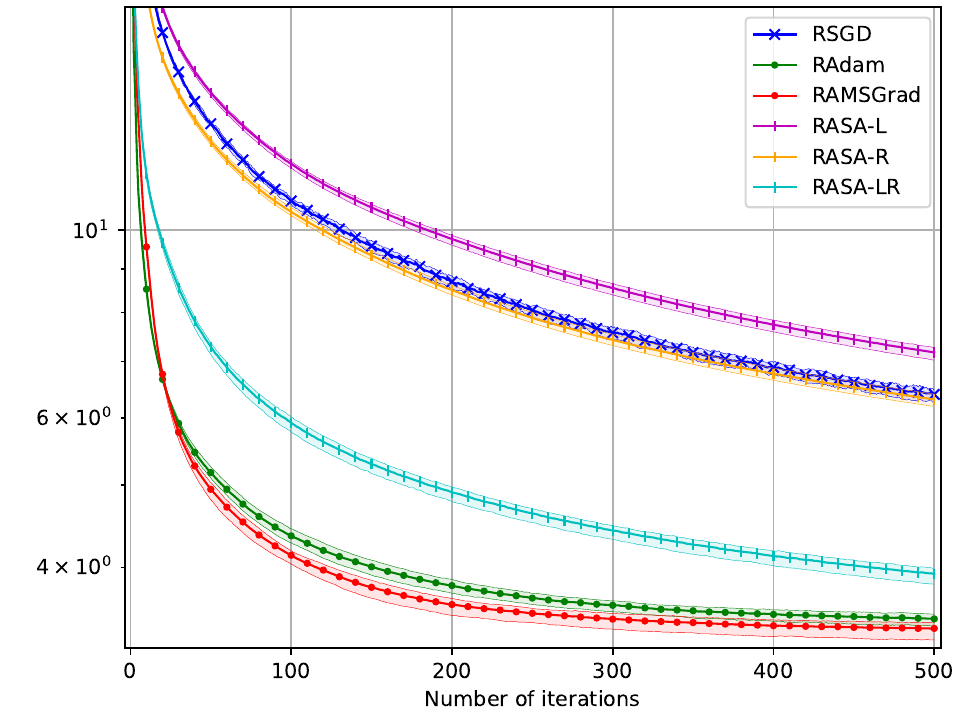}
    \label{fig:coil-fv-c-test}
}    
\subfigure[diminishing learning rate]{
    \includegraphics[clip, width=0.4\columnwidth]{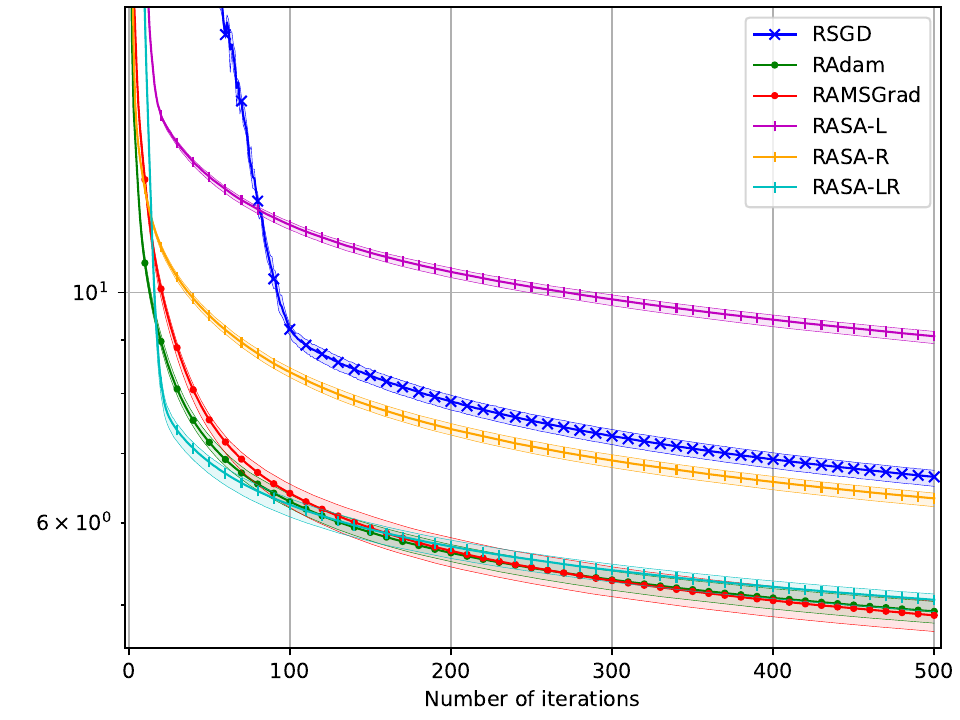}
    \label{fig:coil-fv-d-test}
}
\caption{Objective function value defined by \eqref{eq:pca} versus number of iterations on the test set of the COIL100 datasets.}
\end{figure}

\begin{figure}[htbp]
\centering
\subfigure[constant learning rate]{
    \includegraphics[clip, width=0.4\columnwidth]{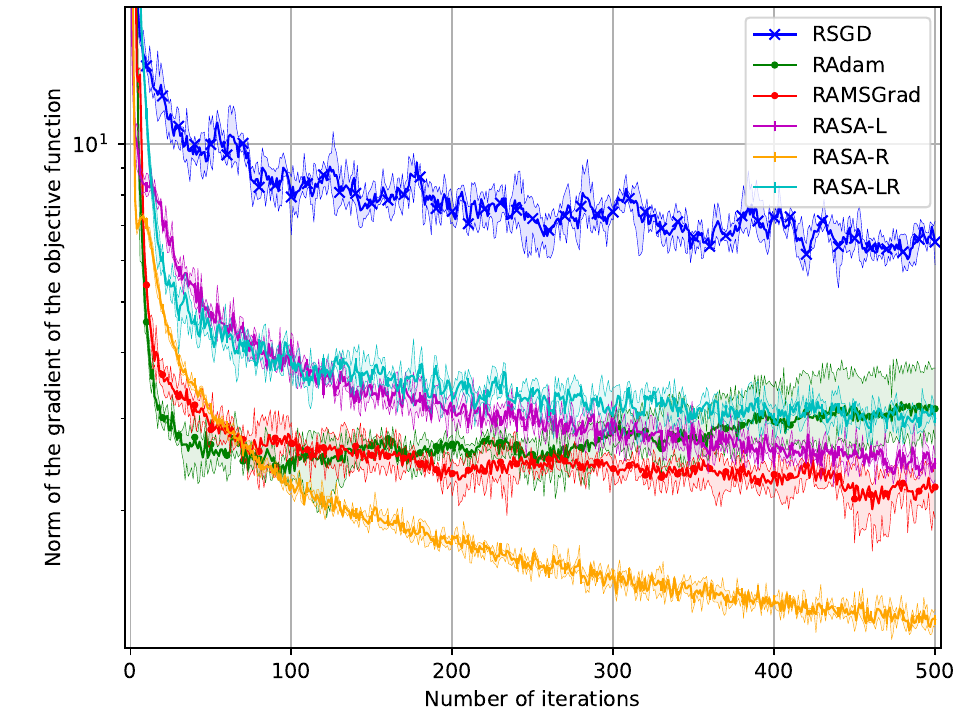}
    \label{fig:coil-gn-c}
}    
\subfigure[diminishing learning rate]{
    \includegraphics[clip, width=0.4\columnwidth]{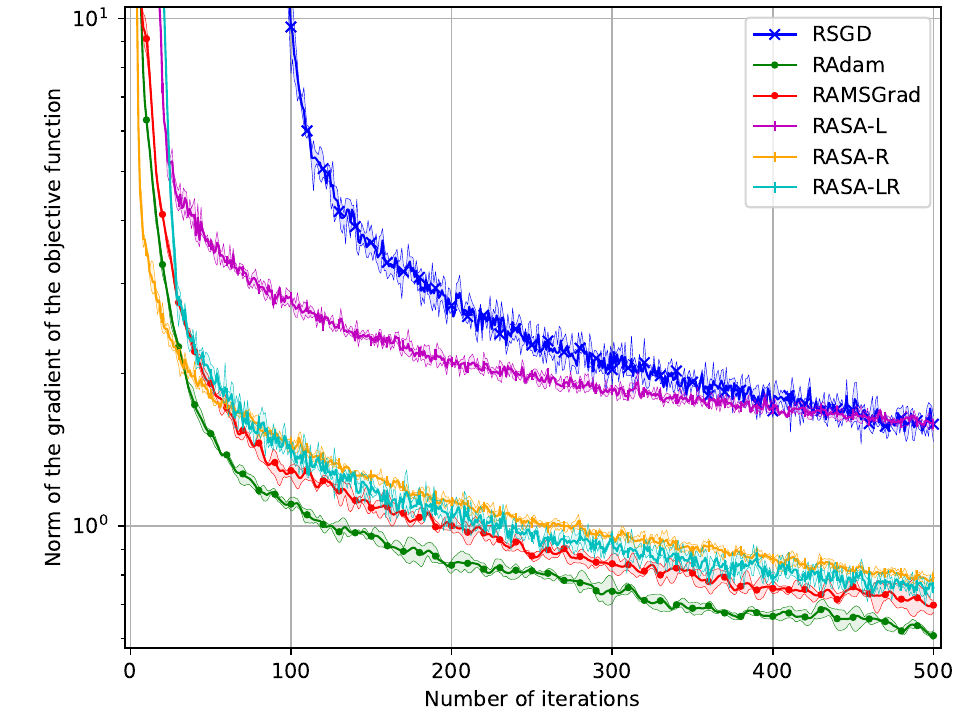}
    \label{fig:coil-gn-d}
}
\caption{Norm of the gradient of objective function defined by \eqref{eq:pca} versus number of iterations on the training set of the COIL100 datasets.}
\end{figure}

\begin{figure}[htbp]
\centering
\subfigure[constant learning rate]{
    \includegraphics[clip, width=0.4\columnwidth]{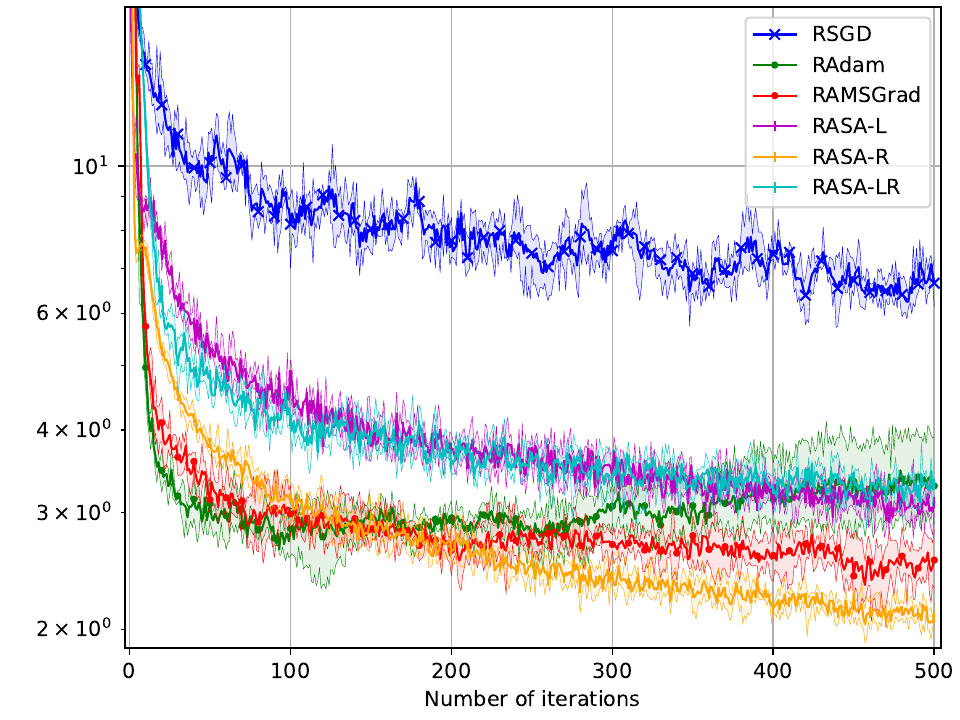}
    \label{fig:coil-gn-c-test}
}    
\subfigure[diminishing learning rate]{
    \includegraphics[clip, width=0.4\columnwidth]{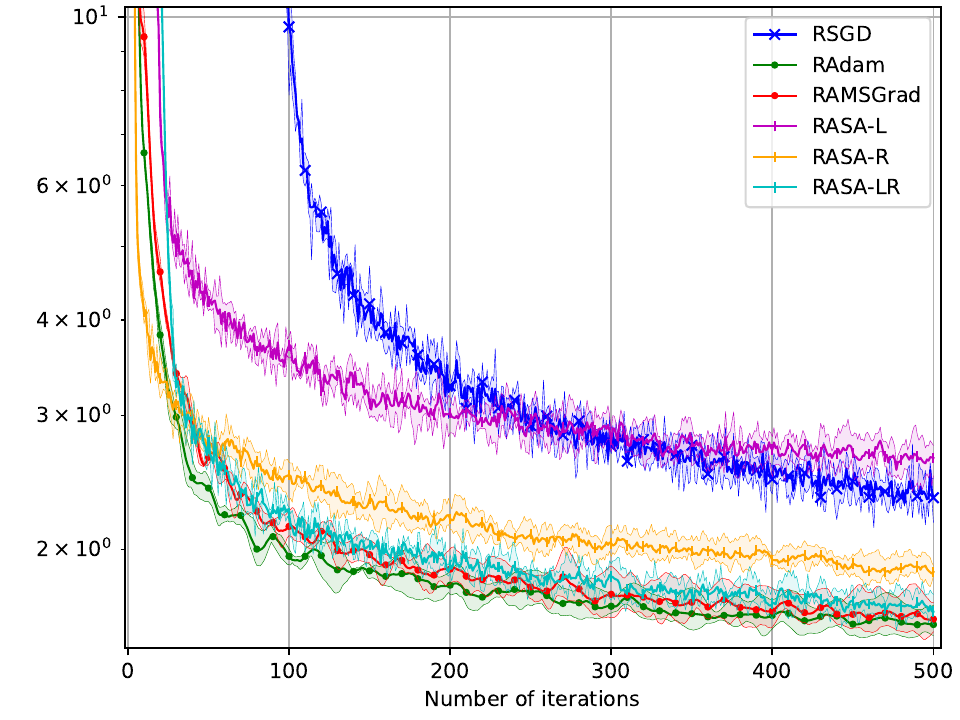}
    \label{fig:coil-gn-d-test}
}
\caption{Norm of the gradient of objective function defined by \eqref{eq:pca} versus number of iterations on the test set of the COIL100 datasets.}
\end{figure}

Figures \ref{fig:mnist-fv-c} and \ref{fig:mnist-fv-c-test} indicate that RAdam and RAMSGrad (Algorithm \ref{alg:AMSGrad}) performed comparably to RASA-LR in the sense of minimizing the objective function value.
Figures \ref{fig:mnist-fv-d} and \ref{fig:mnist-fv-d-test} indicate that RAdam and RAMSGrad (Algorithm \ref{alg:AMSGrad}) outperformed RASA-L and RASA-R.
Figures \ref{fig:mnist-gn-c} and \ref{fig:mnist-gn-c-test} show that RAMSGrad (Algorithm \ref{alg:AMSGrad}) performed better than RASA-LR in the sense of minimizing the full gradient norm of the objective function.
Figures \ref{fig:coil-fv-c} and \ref{fig:coil-fv-c-test} indicate that RAdam and RAMSGrad (Algorithm \ref{alg:AMSGrad}) had the best performance in the sense of minimizing the objective function value.
Figures \ref{fig:coil-fv-d} and \ref{fig:coil-fv-d-test} indicate that RAdam and RAMSGrad (Algorithm \ref{alg:AMSGrad}) performed comparably to RASA-LR.
Figures \ref{fig:coil-gn-c} and \ref{fig:coil-gn-c-test} shows that RAdam had the best performance in the sense of minimizing the full gradient norm of the objective function.
Figures \ref{fig:coil-gn-d} and \ref{fig:coil-gn-d-test} indicate that RAdam and RAMSGrad (Algorithm \ref{alg:AMSGrad}) performed comparably to RASA-R and RASA-LR.

Table \ref{tab:mnist-b-c} compares performances across different batch sizes, showing the number of iterations and CPU time (in seconds) required by each algorithm with a constant step size to reduce the gradient norm below 2 when solving the PCA problem on the MNIST dataset. As our analysis (Theorem \ref{thm:constant}) indicates, increasing the batch size reduces the number of iterations needed to decrease the norm.

\begin{table}[htbp]
\caption{Iterations and CPU time (seconds) required by each algorithm with a constant step size to reduce the gradient norm below 2 for solving the PCA problem on the MNIST dataset across different batch sizes. ``-'' indicates cases where the algorithm did not reach the threshold within the maximum allowed (1,000) iterations.}
\label{tab:mnist-b-c}
\centering
\begin{tabular}{c|c|cc}
\hline
& batch size $b$ & number of iterations & CPU time (seconds) \\ \hline\hline
\multirow{3}{*}{RSGD} & 256 & - & - \\
& 512 & 303 & 0.531 \\
& 1024 & 149 & 0.394 \\ \hline
\multirow{3}{*}{RAdam} & 256 & 391 & 0.532 \\
& 512 & 150 & 0.270 \\
& 1024 & 126 & 0.349 \\ \hline
\multirow{3}{*}{RAMSGrad} & 256 & 190 & 0.257 \\
& 512 & 140 & 0.251 \\
& 1024 & 114 & 0.323 \\ \hline
\multirow{3}{*}{RASA-L} & 256 & 493 & 1.228 \\
& 512 & 442 & 1.291 \\
& 1024 & 373 & 1.476 \\ \hline
\multirow{3}{*}{RASA-R} & 256 & 397 & 0.662 \\
& 512 & 371 & 0.809 \\
& 1024 & 300 & 0.901 \\ \hline
\multirow{3}{*}{RASA-LR} & 256 & - & - \\
& 512 & 289 & 0.862 \\
& 1024 & 113 & 0.447 \\ \hline
\end{tabular}
\end{table}

Table \ref{tab:mnist-b-d} compares performances across different batch sizes, showing the number of iterations and CPU time (in seconds) required by each algorithm with a diminishing step size to reduce the gradient norm below 2 when solving the PCA problem on the MNIST dataset. As our analysis (Theorem \ref{thm:diminishing}) indicates, increasing the batch size reduces the number of iterations needed to decrease the norm. Similar tables (Tables \ref{tab:coil100-b-c}--\ref{tab:jester-b-d}) for the remaining datasets and experiments can be found in Appendix \ref{apx:batch-comparisons}.

\begin{table}[htbp]
\caption{Iterations and CPU time (seconds) required by each algorithm with a diminishing step size to reduce the gradient norm below 2 for solving the PCA problem on the MNIST dataset across different batch sizes.}
\label{tab:mnist-b-d}
\centering
\begin{tabular}{c|c|cc}
\hline
& batch size $b$ & number of iterations & CPU time (seconds) \\ \hline\hline
\multirow{3}{*}{RSGD} & 256 & 239 & 0.315 \\
& 512 & 140 & 0.327 \\
& 1024 & 85 & 0.229 \\ \hline
\multirow{3}{*}{RAdam} & 256 & 292 & 0.434 \\
& 512 & 189 & 0.349 \\
& 1024 & 101 & 0.280 \\ \hline
\multirow{3}{*}{RAMSGrad} & 256 & 234 & 0.324 \\
& 512 & 224 & 0.437 \\
& 1024 & 141 & 0.384 \\ \hline
\multirow{3}{*}{RASA-L} & 256 & 409 & 1.008 \\
& 512 & 370 & 1.133 \\
& 1024 & 282 & 1.102 \\ \hline
\multirow{3}{*}{RASA-R} & 256 & 235 & 0.399 \\
& 512 & 194 & 0.418 \\
& 1024 & 156 & 0.472 \\ \hline
\multirow{3}{*}{RASA-LR} & 256 & 92 & 0.229 \\
& 512 & 63 & 0.192 \\
& 1024 & 36 & 0.145 \\ \hline
\end{tabular}
\end{table}

\subsection{Low-rank matrix completion}
We applied the algorithms to the low-rank matrix completion (LRMC) problem \citep{boumal2015low, kasai2019riemannian, hu2024riemannian}. The LRMC problem aims to recover a low-rank matrix from an incomplete matrix $X=(X_{ij})\in\real^{n\times N}$. We denote the set of observed entries by $\Omega\subset\{1,\ldots,n\}\times\{1,\ldots,N\}$, i.e., $(i,j)\in\Omega$ if and only if $X_{ij}$ is known. Here, we defined the orthogonal projection $P_{\Omega_i}:\real^n\to\real^n:a\mapsto P_{\Omega_i}(a)$ such that the $j$-th element of $P_{\Omega_i}(a)$ is $a_j$ if $(i,j)\in\Omega$, and 0 otherwise. Moreover, we defined $q_i:\real^{n\times p}\times\real^n\to\real^p$ as
\begin{align}\label{eq:lstsq}
    q_i(U,x):=\argmin_{a\in\real^p}\norm{P_{\Omega_i}(Ua-x)}_2,
\end{align}
for $i\geq 1$. By partitioning $X=(x_1,\ldots,x_N)$, the rank-$p$ LRMC problem is equivalent to minimizing
\begin{align}\label{eq:lmc}
    f(U):=\frac{1}{2N}\sum_{i=1}^N
    \norm{P_{\Omega_i}(Uq_i(U,x_i)-x_i)}^2_2,
\end{align}
on the Grassmann manifold $\Gr(p,n)$.
Therefore, the rank-$p$ LRMC problem can be considered to be
an optimization problem on the Grassmann manifold
(see \citet[Section 1]{hu2024riemannian} or
\citet[Section 6.3]{kasai2019riemannian} for details).

We evaluated the algorithms on the
MovieLens-1M\footnote{\url{https://grouplens.org/datasets/movielens/}} datasets \citep{harper2015movielens} and the
Jester\footnote{\url{https://grouplens.org/datasets/jester}} datasets for recommender systems. The MovieLens-1M datasets contains 1,000,209 ratings
given by 6,040 users on 3,952 movies.
Moreover, we split them into an $80\%$ training set and a $20\%$ test set.
Thus, we set $N=4832$ and $n=3952$.
The Jester dataset contains ratings of 100 jokes given by 24,983 users with scores from $-10$ to 10.
In addition, we split them into an $80\%$ training set and a $20\%$ test set.
Thus, we set $N=19986$ and $n=100$.

In the experiments, we set $p$ to $10$ and the batch size $b$ to $2^8$. We used \texttt{numpy.linalg.lstsq}\footnote{\url{https://numpy.org/doc/1.26/reference/generated/numpy.linalg.lstsq.html}} to solve the least squares problem \eqref{eq:lstsq}. We used a retraction based on a polar decomposition on the Grassmann manifold $\Gr(p,n)$ \citep[Example 4.1.3]{absil2008optimization}, which is defined through
\begin{align*}
    \overline{R_{[X]}(\eta)}:=(X+\bar{\eta}_X)
    (I_p+\bar{\eta}_X^\top\bar{\eta}_X)^{-\frac{1}{2}},
\end{align*}
for $[X]\in\Gr(p,n)$ and $\eta\in T_{[X]}\Gr(p,n)$.

Figure \ref{fig:ml1m-fv-c} (resp. Figure \ref{fig:ml1m-fv-d}) shows the performances of the algorithms with a constant (resp. diminishing) step size for objective function values defined by \eqref{eq:lmc} with respect to the number of iterations on the
training set of the MovieLens-1M dataset,
while Figures \ref{fig:ml1m-fv-c-test} and \ref{fig:ml1m-fv-d-test}
present those on the Jester dataset.
Moreover, Figures \ref{fig:jester-fv-c} and \ref{fig:jester-fv-d} show those on the training set of the Jester dataset, while
Figures \ref{fig:jester-fv-c-test} and \ref{fig:jester-fv-d-test}
present those on the test set of the Jester dataset.
Figure \ref{fig:ml1m-gn-c} (resp. \ref{fig:ml1m-gn-d}) shows the performances of the algorithms with a constant (resp. diminishing) step size for the norm of the gradient of the objective function defined by \eqref{eq:pca} with respect to the number of iterations on the
training set of the MovieLens-1M dataset,
while Figures \ref{fig:ml1m-gn-c-test} and \ref{fig:ml1m-gn-d-test}
present those on the test set of the MovieLens-1M dataset.
Moreover, Figures \ref{fig:jester-gn-c} and \ref{fig:jester-gn-d} show those on the training set of the Jester dataset, while
Figures \ref{fig:jester-gn-c-test} and \ref{fig:jester-gn-d-test}
present those on the test set of the Jester dataset.
The experiments were performed for three random initial points, and the thick line plots the average results of all experiments. The area bounded by the maximum and minimum values is painted the same color as the corresponding line.
Moreover, we experimented with a doubly increasing batch size every 20 steps from an initial batch size of $b_1=2^6$ and constant and diminishing step sizes.
The results are given in Appendix \ref{apx:inc}.

\begin{figure}[htbp]
\centering
\subfigure[constant learning rate]{
    \includegraphics[clip, width=0.4\columnwidth]{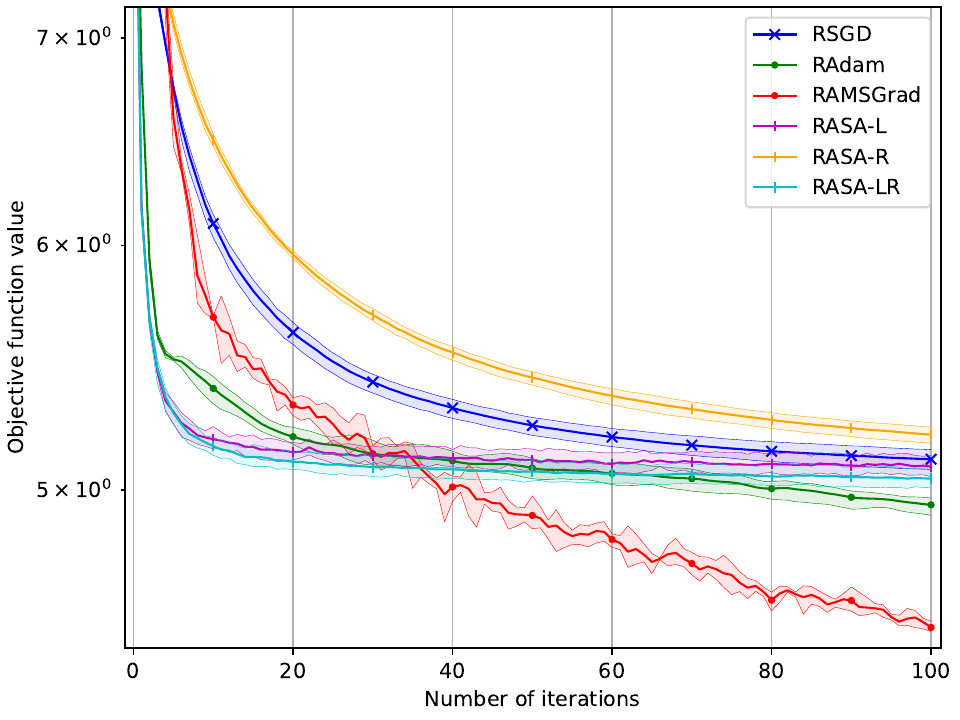}
    \label{fig:ml1m-fv-c}
}    
\subfigure[diminishing learning rate]{
    \includegraphics[clip, width=0.4\columnwidth]{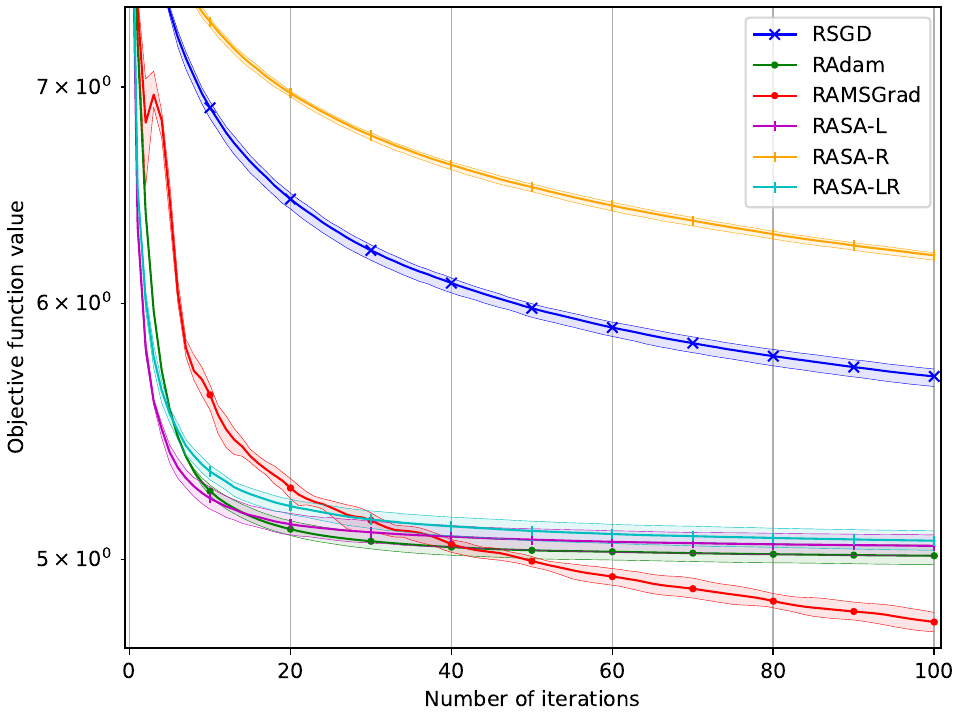}
    \label{fig:ml1m-fv-d}
}
\caption{Objective function value defined by \eqref{eq:lmc} versus number of iterations on the training set of the MovieLens-1M datasets.}
\end{figure}

\begin{figure}[htbp]
\centering
\subfigure[constant learning rate]{
    \includegraphics[clip, width=0.4\columnwidth]{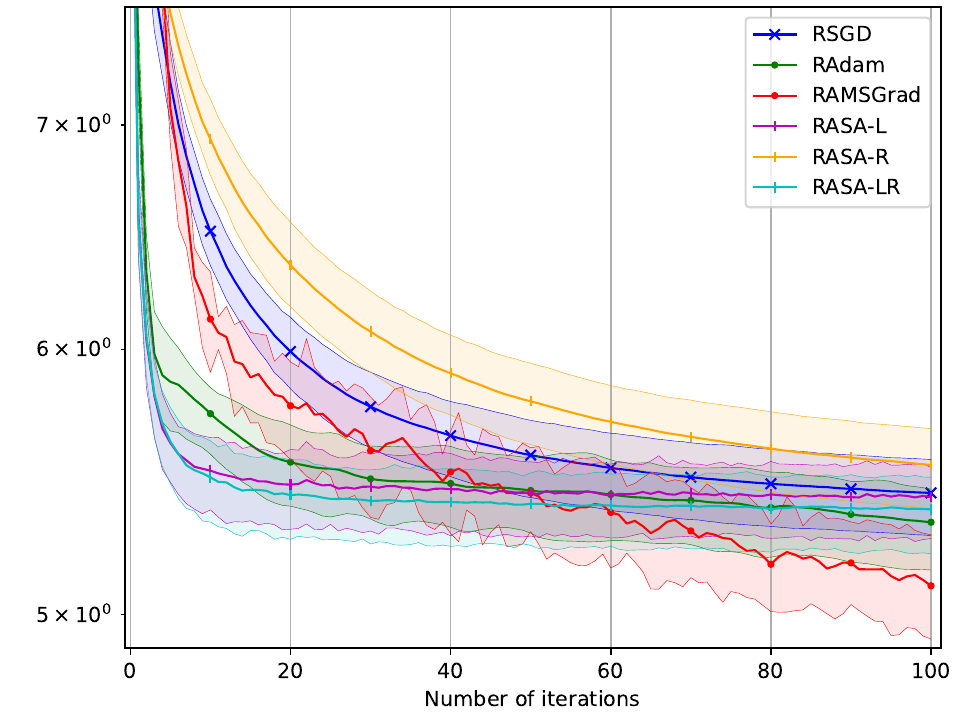}
    \label{fig:ml1m-fv-c-test}
}    
\subfigure[diminishing learning rate]{
    \includegraphics[clip, width=0.4\columnwidth]{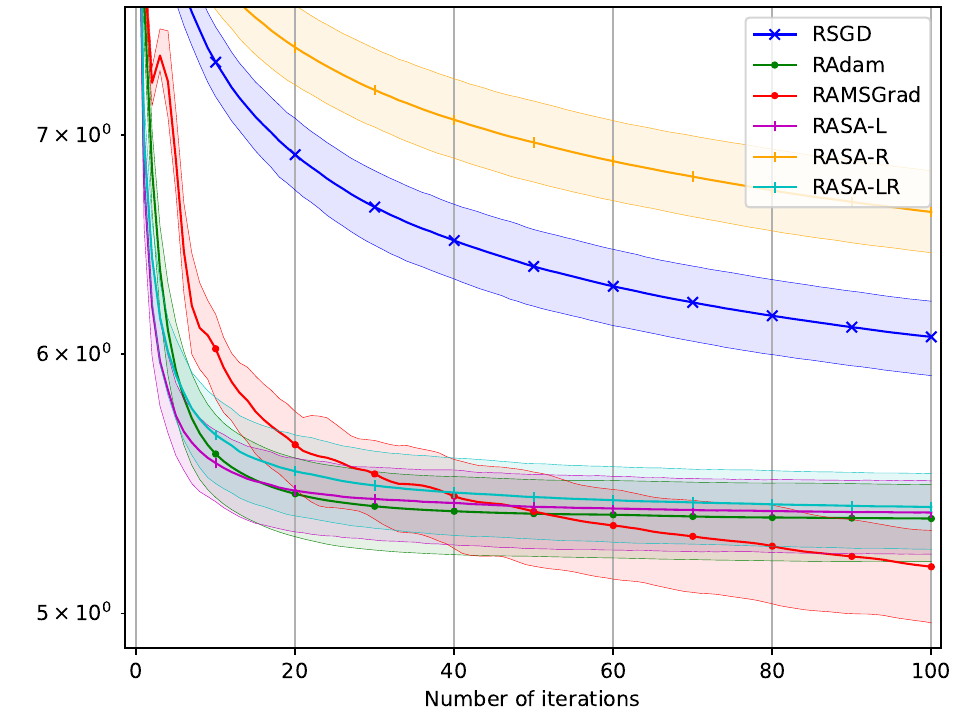}
    \label{fig:ml1m-fv-d-test}
}
\caption{Objective function value defined by \eqref{eq:lmc} versus number of iterations on the test set of the MovieLens-1M datasets.}
\end{figure}

\begin{figure}[htbp]
\centering
\subfigure[constant learning rate]{
    \includegraphics[clip, width=0.4\columnwidth]{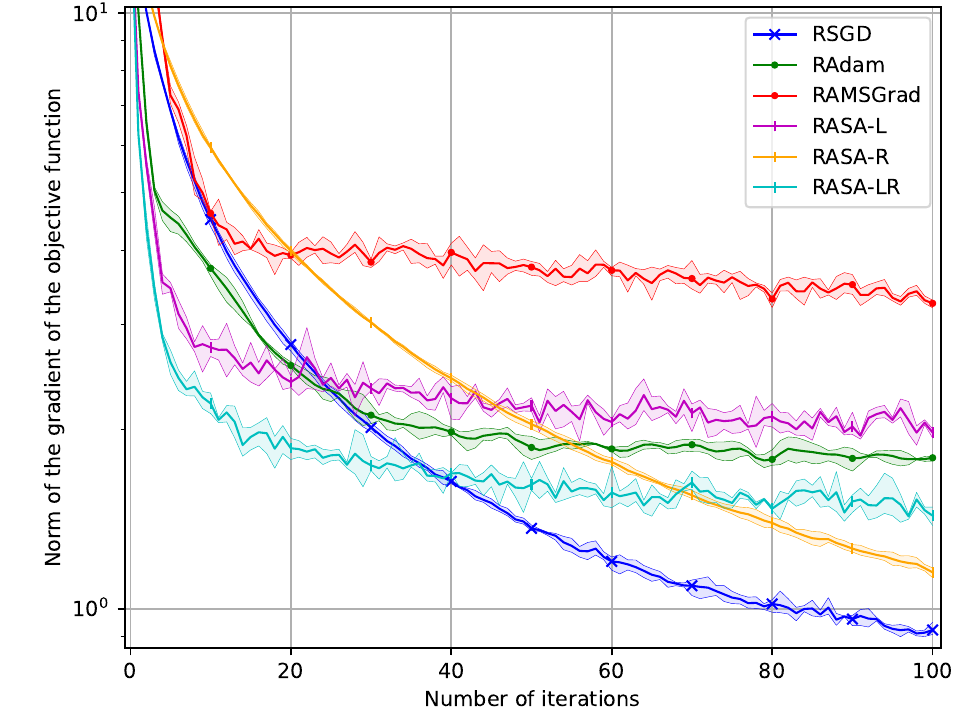}
    \label{fig:ml1m-gn-c}
}    
\subfigure[diminishing learning rate]{
    \includegraphics[clip, width=0.4\columnwidth]{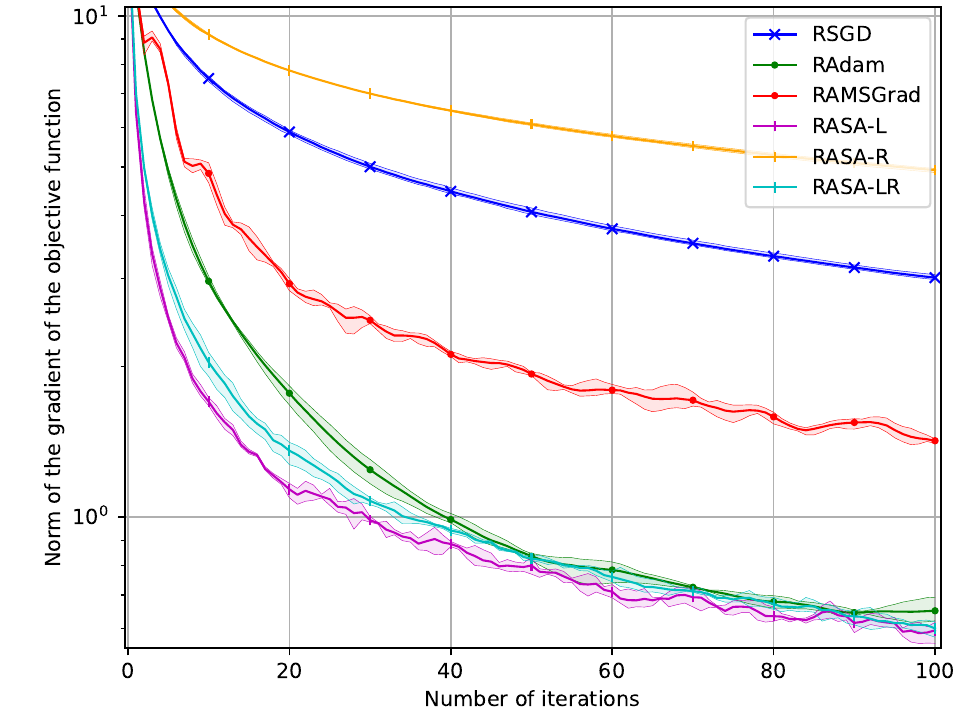}
    \label{fig:ml1m-gn-d}
}
\caption{Norm of the gradient of objective function defined by \eqref{eq:lmc} versus number of iterations on the training set of the MovieLens-1M datasets.}
\end{figure}

\begin{figure}[htbp]
\centering
\subfigure[constant learning rate]{
    \includegraphics[clip, width=0.4\columnwidth]{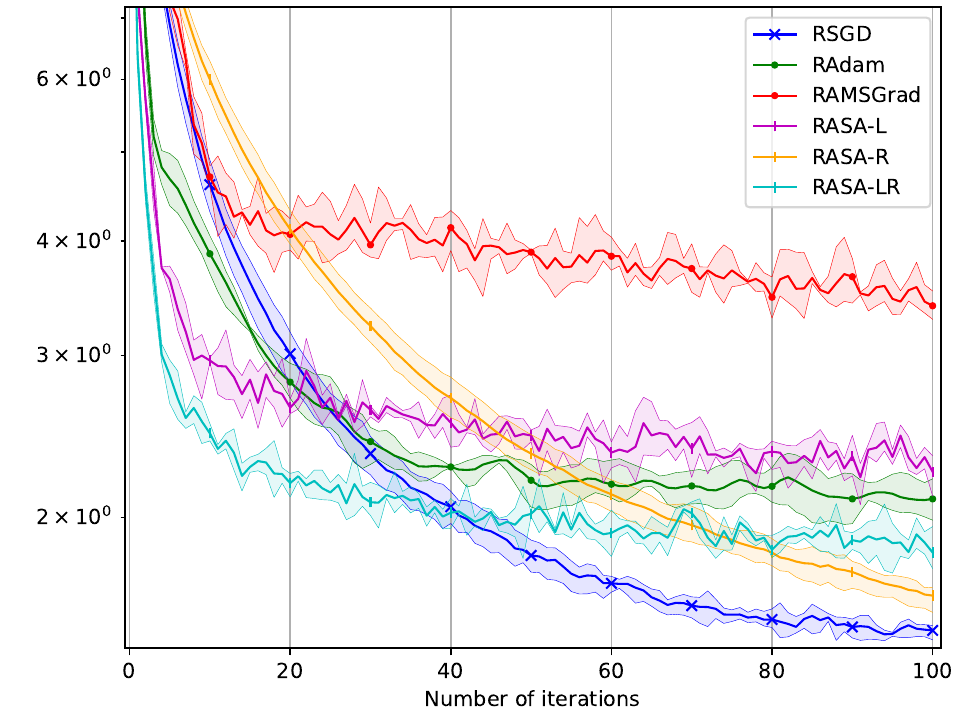}
    \label{fig:ml1m-gn-c-test}
}    
\subfigure[diminishing learning rate]{
    \includegraphics[clip, width=0.4\columnwidth]{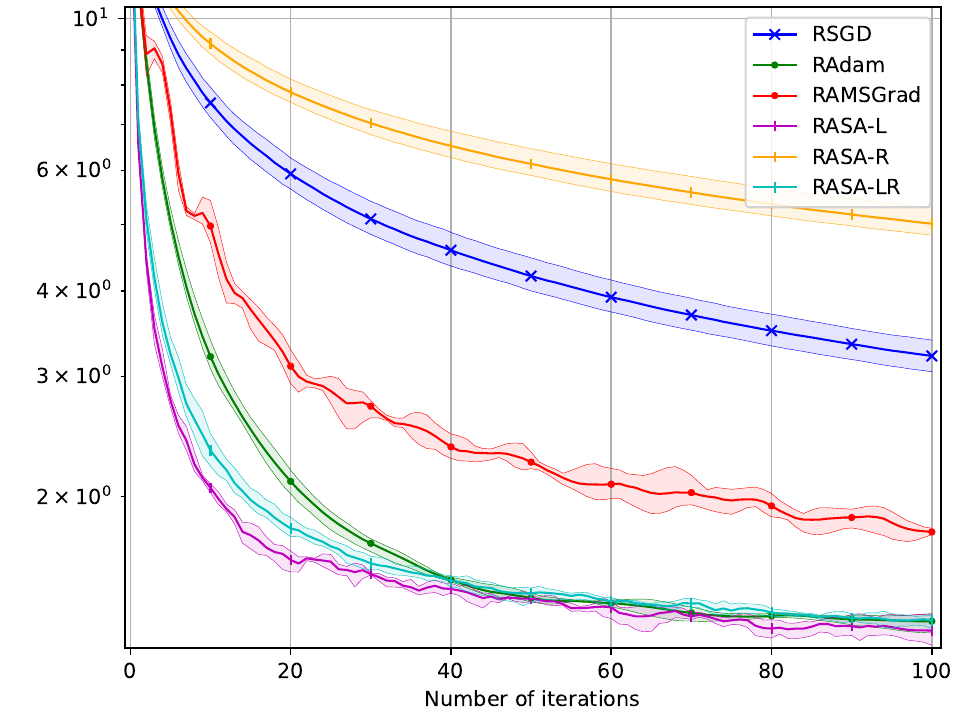}
    \label{fig:ml1m-gn-d-test}
}
\caption{Norm of the gradient of objective function defined by \eqref{eq:lmc} versus number of iterations on the test set of the MovieLens-1M datasets.}
\end{figure}

\begin{figure}[htbp]
\centering
\subfigure[constant learning rate]{
    \includegraphics[clip, width=0.4\columnwidth]{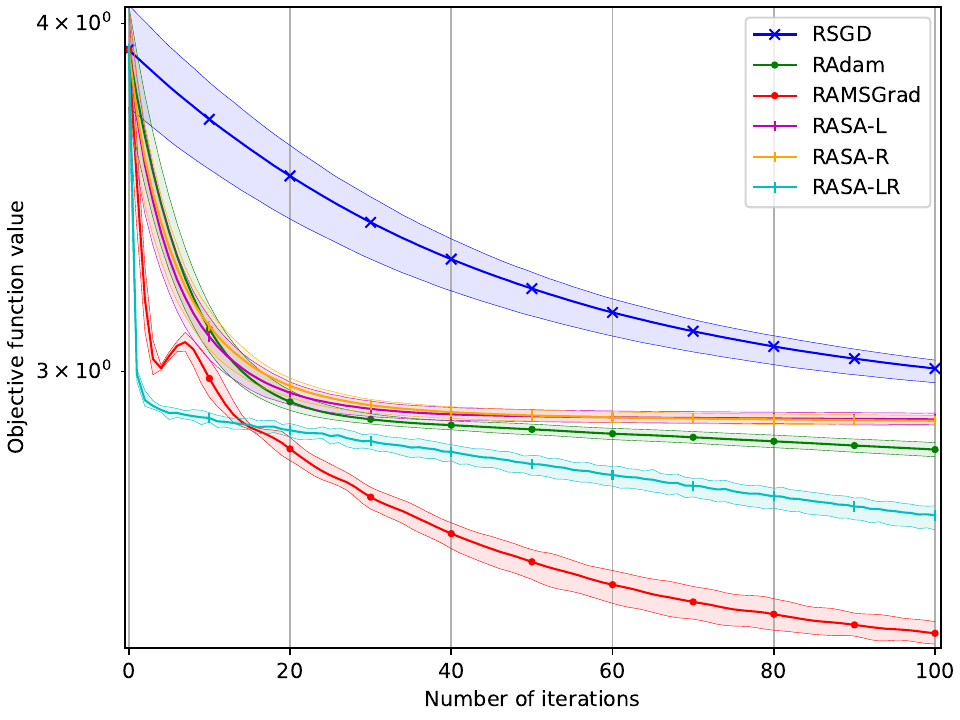}
    \label{fig:jester-fv-c}
}    
\subfigure[diminishing learning rate]{
    \includegraphics[clip, width=0.4\columnwidth]{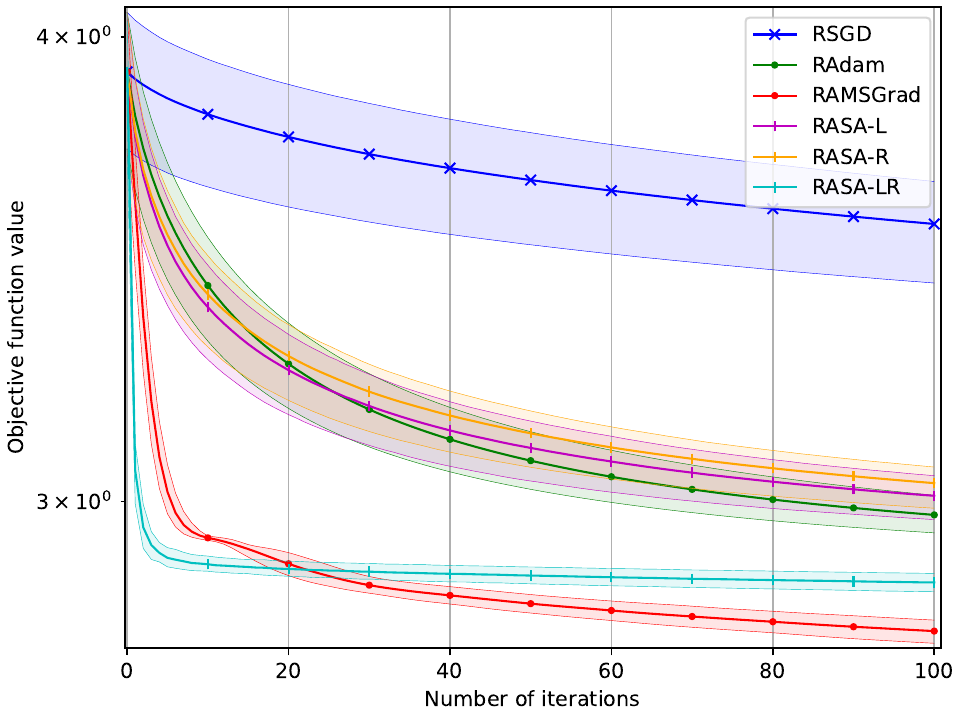}
    \label{fig:jester-fv-d}
}
\caption{Objective function value defined by \eqref{eq:lmc} versus number of iterations on the training set of the Jester datasets.}
\end{figure}

\begin{figure}[htbp]
\centering
\subfigure[constant learning rate]{
    \includegraphics[clip, width=0.4\columnwidth]{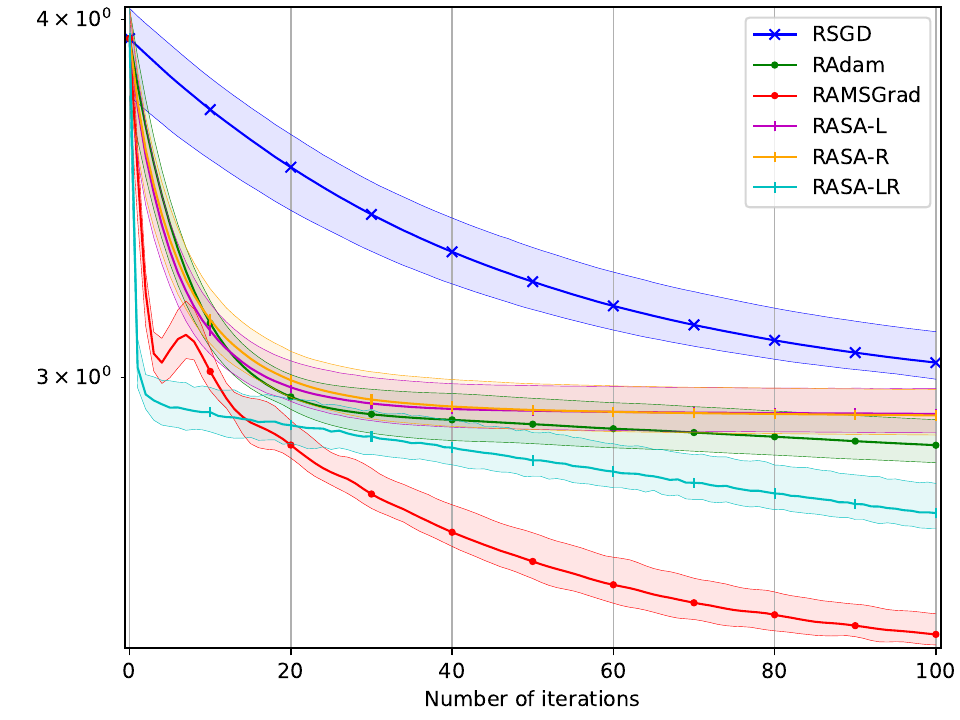}
    \label{fig:jester-fv-c-test}
}    
\subfigure[diminishing learning rate]{
    \includegraphics[clip, width=0.4\columnwidth]{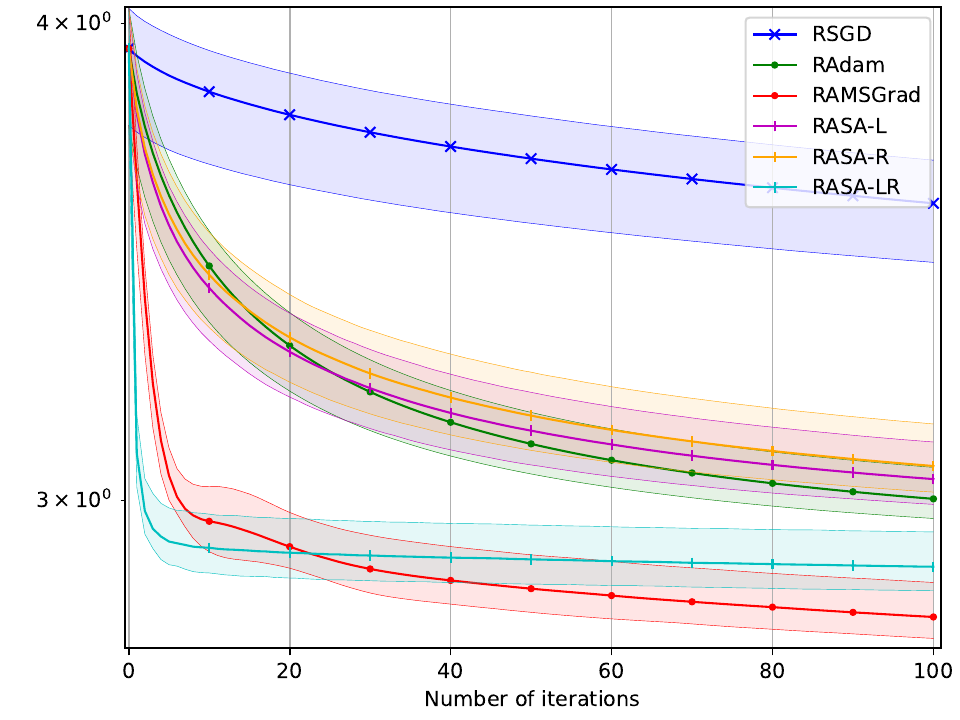}
    \label{fig:jester-fv-d-test}
}
\caption{Objective function value defined by \eqref{eq:lmc} versus number of iterations on the test set of the Jester datasets.}
\end{figure}

\begin{figure}[htbp]
\centering
\subfigure[constant learning rate]{
    \includegraphics[clip, width=0.4\columnwidth]{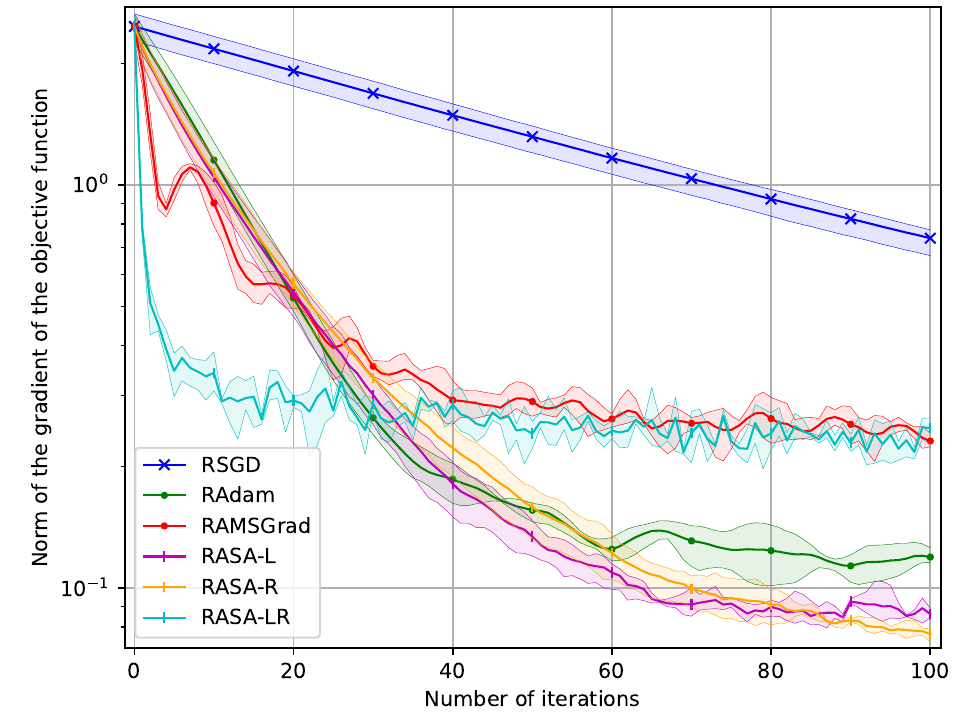}
    \label{fig:jester-gn-c}
}    
\subfigure[diminishing learning rate]{
    \includegraphics[clip, width=0.4\columnwidth]{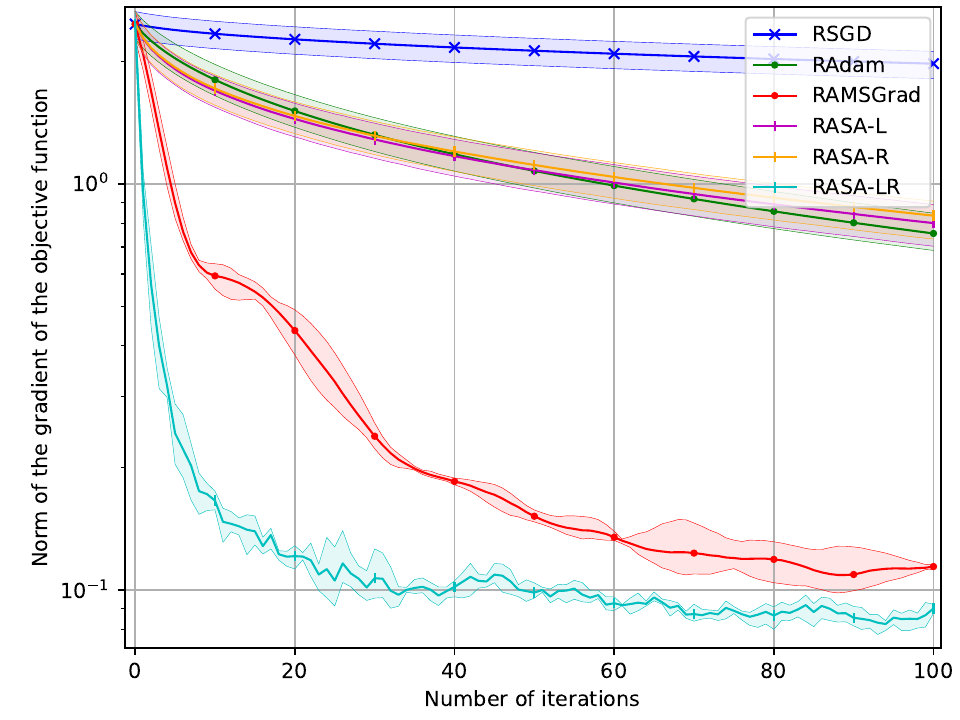}
    \label{fig:jester-gn-d}
}
\caption{Norm of the gradient of objective function defined by \eqref{eq:lmc} versus number of iterations on the training set of the Jester datasets.}
\end{figure}

\begin{figure}[htbp]
\centering
\subfigure[constant learning rate]{
    \includegraphics[clip, width=0.4\columnwidth]{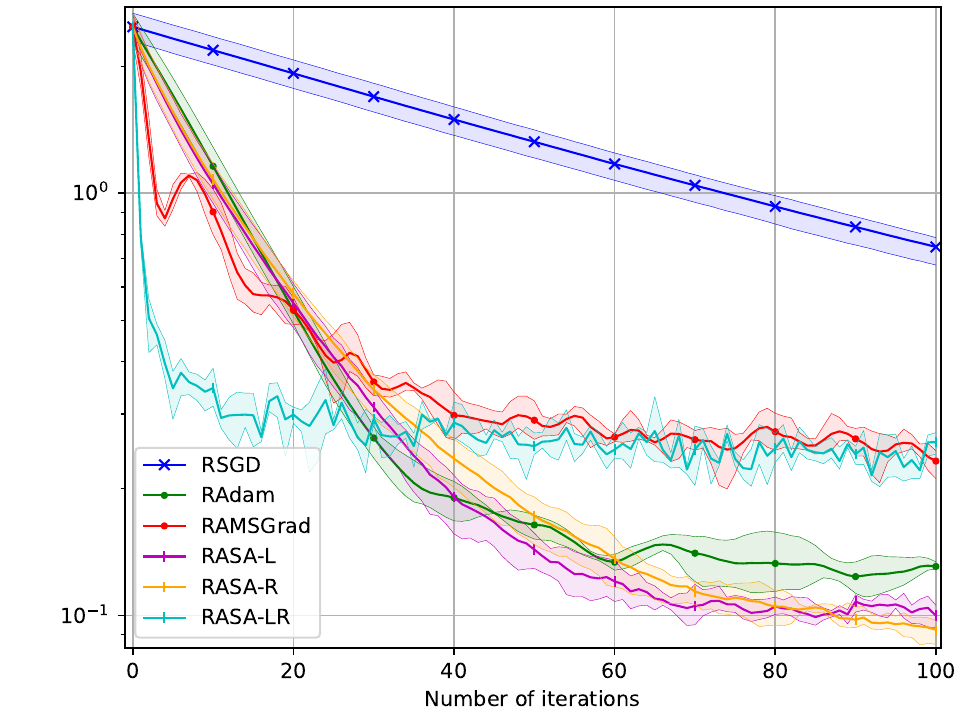}
    \label{fig:jester-gn-c-test}
}    
\subfigure[diminishing learning rate]{
    \includegraphics[clip, width=0.4\columnwidth]{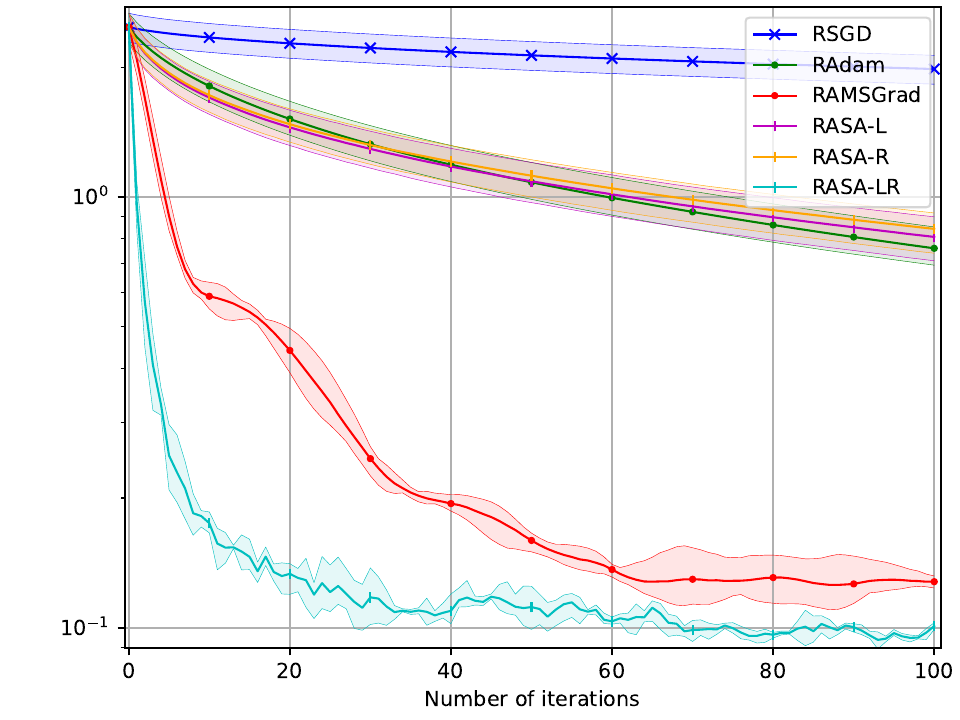}
    \label{fig:jester-gn-d-test}
}
\caption{Norm of the gradient of objective function defined by \eqref{eq:lmc} versus number of iterations on the test set of the Jester datasets.}
\end{figure}

Figures \ref{fig:ml1m-fv-c} and \ref{fig:ml1m-fv-c-test} indicate
that RAMSGrad (Algorithm \ref{alg:AMSGrad}) performed better than RASA-L and RASA-LR in the sense of minimizing the objective function value. Figures \ref{fig:ml1m-fv-d} and \ref{fig:ml1m-fv-d-test} show that RAdam and RAMSGRad (Algorithm \ref{alg:AMSGrad}) performed comparably to RASA-L and RASA-LR.
Figures \ref{fig:ml1m-gn-c} and \ref{fig:ml1m-gn-c-test} indicate that RAdam performed comparably to RASA-R in the sense of minimizing the full gradient norm of the objective function.
Figures \ref{fig:ml1m-gn-d} and \ref{fig:ml1m-gn-d-test} show that RAdam outperformed RASA-R.
Figures \ref{fig:jester-fv-c}, \ref{fig:jester-fv-d}, \ref{fig:jester-fv-c-test} and \ref{fig:jester-fv-d-test} indicate that RAMSGrad (Algorithm \ref{alg:AMSGrad}) performed better than RASA-LR in the sense of minimizing the objective function value.
Figures \ref{fig:jester-gn-c} and \ref{fig:jester-gn-c-test} indicate that RAdam performed comparably to RASA-L and RASA-R in the sense of minimizing the full gradient norm of the objective function.
Figures \ref{fig:jester-gn-d} and \ref{fig:jester-gn-d-test} show that RAMSGrad (Algorithm \ref{alg:AMSGrad}) outperformed RASA-L and RASA-R.

\section{Conclusion}
This paper proposed a general framework of Riemannian adaptive optimization methods, which encapsulates several stochastic optimization algorithms on Riemannian manifolds. The framework incorporates the mini-batch strategy often used in deep learning. We also proposed RAMSGrad (Algorithm \ref{alg:AMSGrad}) that works on embedded submanifolds in Euclidean space within our framework. In addition, we gave convergence analyses that are valid for both a constant and diminishing step size. The analyses also revealed the relationship between the convergence rate and mini-batch size. We numerically compared the RAMSGrad (Algorithm \ref{alg:AMSGrad}) with the existing algorithms by applying them to
principal component analysis and low-rank matrix completion problems, which can be considered to be Riemannian optimization problems. Numerical experiments showed that the proposed method performs well against PCA. RAdam and RAMSGrad performed well for constant and diminishing step sizes especially on the COIL100 dataset.

\subsubsection*{Acknowledgments}
We are sincerely grateful to the Action Editor, Stephen Becker, and the three anonymous reviewers for helping us improve the original manuscript.
This work was supported by a JSPS KAKENHI Grant, number JP23KJ2003.

\bibliography{main}
\bibliographystyle{tmlr}

\appendix
\section{Useful Lemmas}
\begin{lemma}\label{lem:ex_bgrad}
    Suppose that Assumption \ref{asm:main} \ref{asm:unbiased} holds.
    Let $(x_k)_{k=1}^\infty$ be a sequence generated
    by Algorithm \ref{alg:general}. Then,
    \begin{align*}
    \ex_k\left[\grad f_{B_k}(x_k)\right]=\grad f(x_k),
    \end{align*}
    for all $k\geq 1$.
\end{lemma}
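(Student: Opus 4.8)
The plan is to simply unwind the definition of the mini-batch stochastic gradient and invoke linearity of the (conditional) expectation. Recall from \eqref{eq:bgrad} that
\begin{align*}
    \grad f_{B_k}(x_k)=\frac{1}{b_k}\sum_{i=1}^{b_k}\grad f_{s_{k,i}}(x_k).
\end{align*}
Since $\bs_k=(s_{k,1},\ldots,s_{k,b_k})^\top$ is independent of $(x_k)_{k=1}^\infty$, the point $x_k$ may be regarded as fixed with respect to $\ex_k$, so that $\ex_k[\cdot]$ acts only through the random indices $s_{k,i}$.

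First I would apply linearity of $\ex_k$ to pull the expectation inside the finite sum and past the deterministic scalar $1/b_k$, obtaining
\begin{align*}
    \ex_k\left[\grad f_{B_k}(x_k)\right]
    =\frac{1}{b_k}\sum_{i=1}^{b_k}\ex_k\left[\grad f_{s_{k,i}}(x_k)\right].
\end{align*}
Then I would invoke Assumption \ref{asm:main} \ref{asm:unbiased}, which gives $\ex_k[\grad f_{s_{k,i}}(x_k)]=\grad f(x_k)$ for every $i=1,\ldots,b_k$; substituting this into the display and using $\frac{1}{b_k}\sum_{i=1}^{b_k}\grad f(x_k)=\grad f(x_k)$ completes the argument.

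There is essentially no obstacle here: the only point that deserves a word of care is that $\ex_k$ denotes expectation with respect to $\bs_k$ alone, so that $x_k$ (which depends only on $\bs_1,\ldots,\bs_{k-1}$) is treated as a constant under $\ex_k$; this is exactly the independence assumption built into the mini-batch setup, and it is what licenses moving $\ex_k$ termwise onto $\grad f_{s_{k,i}}(x_k)$ and then reading off Assumption \ref{asm:unbiased}.
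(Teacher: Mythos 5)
Your proposal is correct and follows essentially the same route as the paper's own proof: expand $\grad f_{B_k}(x_k)$ via \eqref{eq:bgrad}, use linearity of $\ex_k$ to move the expectation inside the sum, and apply Assumption \ref{asm:main} \ref{asm:unbiased} termwise. The extra remark about $x_k$ being treated as constant under $\ex_k$ (by independence of $\bs_k$ from the iterates) is a sensible clarification that the paper leaves implicit.
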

\begin{proof}
    From \eqref{eq:bgrad}, Assumption \ref{asm:main} \ref{asm:unbiased}
    and the linearity of $\ex_k[\cdot]$, we have
    \begin{align*}
        \ex_k\left[\grad f_{B_k}(x_k)\right]
        =\frac{1}{b_k}\sum_{i=1}^{b_k}\ex_k
        \left[\grad f_{s_{k,i}}(x_k)\right]
        =\grad f(x_k).
    \end{align*}
    This completes the proof.    
\end{proof}

\begin{lemma}\label{lem:sn_bgrad}
    Suppose that Assumptions \ref{asm:main} \ref{asm:unbiased}
    and \ref{asm:variance} hold.
    Let $(x_k)_{k=1}^\infty$ be a sequence generated
    by Algorithm \ref{alg:general}. Then,
    \begin{align*}
    \ex_k\left[\norm{\grad f_{B_k}(x_k)}^2_2\right]
    \leq\frac{\sigma^2}{b_k}+\norm{\grad f(x_k)}^2_2
    \end{align*}
    for all $k\geq 1$.
\end{lemma}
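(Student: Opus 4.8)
The plan is to use the standard bias--variance decomposition of the mini-batch gradient together with the independence of the samples within a mini-batch. First I would invoke Lemma \ref{lem:ex_bgrad}, which gives $\ex_k[\grad f_{B_k}(x_k)] = \grad f(x_k)$, so that expanding the squared norm and noting that the cross term $\ex_k[\ip{\grad f_{B_k}(x_k) - \grad f(x_k)}{\grad f(x_k)}_2]$ vanishes (the second factor is deterministic given the history and the first has zero conditional mean) yields
\[
\ex_k\!\left[\norm{\grad f_{B_k}(x_k)}_2^2\right] = \ex_k\!\left[\norm{\grad f_{B_k}(x_k) - \grad f(x_k)}_2^2\right] + \norm{\grad f(x_k)}_2^2.
\]

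Next I would bound the first term on the right-hand side. Writing $\xi_{k,i} := \grad f_{s_{k,i}}(x_k) - \grad f(x_k)$, the definition \eqref{eq:bgrad} gives $\grad f_{B_k}(x_k) - \grad f(x_k) = \frac{1}{b_k}\sum_{i=1}^{b_k}\xi_{k,i}$, so expanding the squared norm produces the diagonal sum $\frac{1}{b_k^2}\sum_{i=1}^{b_k}\norm{\xi_{k,i}}_2^2$ plus the off-diagonal sum $\frac{1}{b_k^2}\sum_{i\neq j}\ip{\xi_{k,i}}{\xi_{k,j}}_2$. By Assumption \ref{asm:main} \ref{asm:unbiased} we have $\ex_k[\xi_{k,i}] = 0$ for each $i$, and since the samples $s_{k,1},\ldots,s_{k,b_k}$ are drawn independently, $\ex_k[\ip{\xi_{k,i}}{\xi_{k,j}}_2] = \ip{\ex_k[\xi_{k,i}]}{\ex_k[\xi_{k,j}]}_2 = 0$ for $i\neq j$; hence every cross term vanishes in expectation. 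Applying Assumption \ref{asm:main} \ref{asm:variance} to bound each $\ex_k[\norm{\xi_{k,i}}_2^2]$ by $\sigma^2$ then gives $\ex_k[\norm{\grad f_{B_k}(x_k) - \grad f(x_k)}_2^2] \le \frac{1}{b_k^2}\cdot b_k\sigma^2 = \frac{\sigma^2}{b_k}$, and substituting this into the decomposition above completes the proof.

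The main obstacle is the justification that the off-diagonal terms vanish, which relies on the per-sample random variables $s_{k,1},\ldots,s_{k,b_k}$ composing the mini-batch $\bs_k$ being (at least pairwise) independent and each yielding an unbiased gradient estimate --- i.e., independent sampling from the uniform distribution over $\{1,\ldots,N\}$, as in the mini-batch setup underlying \eqref{eq:bgrad}. Under that hypothesis the argument is elementary. I would also remark that if one instead samples without replacement the errors $\xi_{k,i}$ become negatively correlated, so the cross-term sum is nonpositive and the same bound still holds (indeed with an additional factor $(N-b_k)/(N-1)\le 1$), meaning the statement is robust to the sampling scheme.
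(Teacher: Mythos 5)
Your proof is correct and follows essentially the same route as the paper: the same $\norm{a+b}_2^2$ expansion with the cross term killed by Lemma \ref{lem:ex_bgrad}, and the same per-sample expansion of the variance term using independence and unbiasedness to drop the off-diagonal terms before applying \ref{asm:variance}. Your version is in fact slightly more careful than the paper's, which compresses the vanishing of the off-diagonal terms into a single unexplained equality; the remark on sampling without replacement is a nice bonus but not needed.
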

\begin{proof}
    From $\norm{a+b}^2_2=\norm{a}^2_2+2\ip{a}{b}_2+\norm{b}_2^2$,
    we obtain
    \begin{align}
    \ex_k\left[\norm{\grad f_{B_k}(x_k)}^2_2\right]
    &=\ex_k\left[\norm{\grad f_{B_k}(x_k)
    -\grad f(x_k)}^2_2\right] \nonumber\\
    &\quad+2\ex_k\left[\ip{\grad f_{B_k}(x_k)-\grad f(x_k)}
    {\grad f(x_k)}_2\right] +\ex_k\left[\norm{\grad f(x_k)}^2_2\right]\label{eq:sn_bgrad},
    \end{align}
    for all $k\geq 1$. From \eqref{eq:bgrad} and
    Assumption \ref{asm:main} \ref{asm:unbiased},
    the first term on the right-hand side of \eqref{eq:sn_bgrad} yields
    \begin{align*}
        \ex_k\left[\norm{\grad f_{B_k}(x_k)-\grad f(x_k)}_2^2\right] 
        &=\ex_k\left[\norm{\frac{1}{b_k}\sum_{i=1}^{b_k}
        \grad f_{s_{k,i}}(x_k)-\grad f(x_k)}_2^2\right] \\
        &=\frac{1}{b_k^2}\ex_k\left[\sum_{i=1}^{b_k}\norm{
        \grad f_{s_{k,i}}(x_k)-\grad f(x_k)}_2^2\right]\\
        &\leq\frac{\sigma^2}{b_k},
    \end{align*}
    where the second equality comes from
    Assumption \ref{asm:main} \ref{asm:variance}.
    From Lemma \ref{lem:ex_bgrad},
    the second term on the right-hand side of \eqref{eq:sn_bgrad} yields
    \begin{align*}
        2\ex_k\left[\ip{\grad f_{B_k}(x_k)-\grad f(x_k)}
        {\grad f(x_k)}_2\right]
        &=2\ip{\ex_k[\grad f_{B_k}(x_k)]-\grad f(x_k)}
        {\grad f(x_k)}_2 \\
        &=2\ip{\grad f(x_k)-\grad f(x_k)}
        {\grad f(x_k)}_2\\
        &=0.
    \end{align*}
    Therefore, we obtain
    \begin{align*}
    \ex_k\left[\norm{\grad f_{B_k}(x_k)}^2_2\right]
    \leq\frac{\sigma^2}{b_k}+\norm{\grad f(x_k)}^2_2,
    \end{align*}
    for all $k\geq 1$. This completes the proof.
\end{proof}

\begin{lemma}\label{lem:v_bound}
Suppose that Assumption \ref{asm:main} \ref{asm:bound} holds.
Then, the sequence $(x_k)_{k=1}^{\infty}\subset M$ generated by 
Algorithm \ref{alg:AMSGrad} satisfies
\begin{align*}
    \hat{v}_{k,i}\leq B^2,
\end{align*}
for all $k\geq 1$ and $i=1,\ldots,d$.
\end{lemma}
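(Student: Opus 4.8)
The plan is a straightforward induction on $k$, using the recursive definitions of $v_k$ and $\hat{v}_k$ in Algorithm \ref{alg:AMSGrad} together with the componentwise consequence of Assumption \ref{asm:main} \ref{asm:bound}. The first observation I would record is that for each component $i$,
\begin{align*}
g_{k,i}^2\leq\sum_{j=1}^d g_{k,j}^2=\norm{g_k}_2^2=\norm{\grad f_{B_k}(x_k)}_2^2\leq B^2,
\end{align*}
where the last inequality is exactly \ref{asm:bound}. So every squared gradient component fed into the recursion is bounded by $B^2$.

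Next I would prove the slightly stronger claim ``$v_{k,i}\leq B^2$ and $\hat{v}_{k,i}\leq B^2$ for all $k\geq 0$ and all $i$'' by induction on $k$; proving both simultaneously avoids having to separately justify $v_{k,i}\leq\hat{v}_{k,i}$ (though that also follows immediately from $\hat{v}_k=\max(\hat{v}_{k-1},v_k)$). The base case $k=0$ is the initialization $v_0=0$, $\hat{v}_0=0$, so $v_{0,i}=\hat{v}_{0,i}=0\leq B^2$.

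For the inductive step, assume $v_{k-1,i}\leq B^2$ and $\hat{v}_{k-1,i}\leq B^2$. Then, since $\beta_2\in[0,1)$ and using $g_{k,i}^2\leq B^2$,
\begin{align*}
v_{k,i}=\beta_2 v_{k-1,i}+(1-\beta_2)g_{k,i}^2\leq\beta_2 B^2+(1-\beta_2)B^2=B^2,
\end{align*}
and consequently $\hat{v}_{k,i}=\max(\hat{v}_{k-1,i},v_{k,i})\leq\max(B^2,B^2)=B^2$. This closes the induction, and restricting to $k\geq 1$ gives the statement of the lemma. There is no real obstacle here; the only point requiring a moment's care is the elementary bound $g_{k,i}^2\leq\norm{g_k}_2^2$, which lets Assumption \ref{asm:main} \ref{asm:bound} control each coordinate, and the convexity-type step $\beta_2 B^2+(1-\beta_2)B^2=B^2$ that makes the bound propagate through the exponential moving average.
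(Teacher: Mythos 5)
Your proof is correct and follows essentially the same route as the paper's: bound each component via $g_{k,i}^2\leq\norm{g_k}_2^2\leq B^2$, then induct through the exponential moving average using $\beta_2 B^2+(1-\beta_2)B^2=B^2$. If anything, your version is slightly tidier, since you carry both $v_{k,i}\leq B^2$ and $\hat{v}_{k,i}\leq B^2$ through the induction and explicitly close with $\hat{v}_{k,i}=\max(\hat{v}_{k-1,i},v_{k,i})\leq B^2$, a final step the paper's proof leaves implicit.
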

\begin{proof}
    Note that from Assumption \ref{asm:main} \ref{asm:bound}, we have
    \begin{align*}
        g_{k,i}^2\leq g_{k,1}^2+\cdots+g_{k,d}^2=\norm{g_k}_2^2\leq B^2
    \end{align*}
    for all $k\geq 1$ and $i=1,\ldots,d$. The proof is by induction.
    For $k=1$, from $0\leq\beta_2<1$, we have
    \begin{align*}
        \hat{v}_{1,i}=v_{1,i}:=\beta_2v_{0,i}+(1-\beta_2)g_{1,i}^2
        =(1-\beta_2)g_{1,i}^2\leq g_{1,i}^2\leq B^2.
    \end{align*}
    Suppose that $\hat{v}_{k-1,i}\leq B^2$.
    From $v_{k-1,i}\leq\hat{v}_{k-1,i}\leq B^2$, we have
    \begin{align*}
        v_{k,i}=\beta_2v_{k-1,i}+(1-\beta_2)g_{k,i}^2
        \leq\beta_2B^2+(1-\beta_2)B^2=B^2.
    \end{align*}
    Thus, induction ensures that $v_{k,i}\leq B^2$ for all $k\geq 1$.
\end{proof}

\section{Proof of Lemma \ref{lem:embedded_Lipsitz}}\label{apx:embedded_Lipsitz}
\begin{proof}
We denote $\grad f(x_k)$ by $g(x_k)$.
From Proposition \ref{prp:ret_Lsmooth}, we have
\begin{align*}
    f(x_{k+1})&\leq f(x_k)
    +\ip{g(x_k)}{-\alpha_kP_{x_k}(H_k^{-1}g_{k})}_2
    +\frac{L}{2}\norm{-\alpha_kP_{x_k}(H_k^{-1}g_{k})}^2_2,
\end{align*}
for all $k\geq 1$.
Here, we note that the tangent space $T_{x_k}M$ of an embedded submanifold $M$ in Euclidean space is a subspace of Euclidean space. Therefore, according to the result in \citep[6.57 (a)]{axler2015linear},
the projection $P_{x_k}$ is a linear map.
From the linearity and symmetry of $P_{x_k}$, we obtain
\begin{align*}
    \ip{g(x_k)}{-\alpha_kP_{x_k}(H_k^{-1}g_{k})}_2=
    \ip{P_{x_k}(g(x_k))}{-\alpha_kH_k^{-1}g_{k}}_2
    =\ip{g(x_k)}{-\alpha_kH_k^{-1}g_{k}}_2.
\end{align*}
From the symmetry of $P_{x_k}$ and $P_{x_k}\circ P_{x_k}=P_{x_k}$, we have
\begin{align*}
    \norm{-\alpha_kP_{x_k}(H_k^{-1}g_{k})}^2_2
    &=\alpha_k^2\norm{P_{x_k}(H_k^{-1}g_{k})}^2_2 \\
    &=\alpha_k^2\ip{P_{x_k}(H_k^{-1}g_{k})}{P_{x_k}(H_k^{-1}g_{k})}_2 \\
    &=\alpha_k^2\ip{H_k^{-1}g_{k}}{P_{x_k}(P_{x_k}(H_k^{-1}g_{k}))}_2 \\
    &=\alpha_k^2\ip{H_k^{-1}g_{k}}{P_{x_k}(H_k^{-1}g_{k})}_2 \\
    &\leq\alpha_k^2\norm{H_k^{-1}g_{k}}_2\norm{P_{x_k}(H_k^{-1}g_{k})}_2.
\end{align*}
Here, when $P_{x_k}(H_k^{-1}g_{k})\neq 0\in\real^d$, it follows that
\begin{align*}
    \norm{-\alpha_kP_{x_k}(H_k^{-1}g_{k})}^2_2\leq
    \alpha_k^2\norm{H_k^{-1}g_{k}}_2^2
    \leq\alpha_k^2\nu^2\norm{g_{k}}^2_2,
\end{align*}
where
the first inequality comes from $\norm{P_{x_k}(H_k^{-1}g_{k})}_2\leq \norm{H_k^{-1}g_{k}}_2$ (see \citet[Corollary 3.24 (ii)]{bauschke2011covex} and \citet[6.57 (h)]{axler2015linear} for details) and
the second inequality comes from $O\prec H_k^{-1}\preceq\nu I_d$. On the other hand, this inequality clearly holds if $P_{x_k}(H_k^{-1}g_{k})=0\in\real^d$. Therefore, we obtain
\begin{align*}
    f(x_{k+1})&\leq f(x_k)
    +\ip{g(x_k)}{-\alpha_kH_k^{-1}g_{k}}_2
    +\frac{L\alpha_k^2\nu^2}{2}\norm{g_{k}}^2_2,
\end{align*}
for all $k\geq 1$. This completes the proof.
\end{proof}

\section{Linear algebra lemma}
\begin{lemma}\label{lem:linalg}
    Let $a=(a_1,\ldots,a_n)^\top\in\real^n$,
    $b=(b_1,\ldots,b_n)^\top\in\real^n$
    and $D=\diag(d_1,\ldots,d_n)\in\mathcal{S}^n_{+}\cap\mathcal{D}^n$.
    If $\norm{a}_2\leq A$ and $\norm{b}_2\leq B$, then
    \begin{align*}
        a^\top Db\leq AB\tr(D).
    \end{align*}
\end{lemma}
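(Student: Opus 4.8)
The plan is to bound the bilinear form $a^\top D b$ entrywise and then recognize the resulting sum as a weighted inner product to which the Cauchy--Schwarz inequality applies. First I would write $a^\top D b = \sum_{i=1}^n d_i a_i b_i$, using that $D$ is diagonal. Since $D \in \mathcal{S}^n_+ \cap \mathcal{D}^n$, each $d_i \geq 0$, so $d_i a_i b_i \leq d_i |a_i| |b_i|$, and hence $a^\top D b \leq \sum_{i=1}^n d_i |a_i| |b_i|$.

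Next I would apply the Cauchy--Schwarz inequality to the vectors with components $\sqrt{d_i}\,|a_i|$ and $\sqrt{d_i}\,|b_i|$ (which is legitimate precisely because $d_i \geq 0$ makes $\sqrt{d_i}$ well-defined), giving
\begin{align*}
    \sum_{i=1}^n d_i |a_i| |b_i|
    = \sum_{i=1}^n \bigl(\sqrt{d_i}\,|a_i|\bigr)\bigl(\sqrt{d_i}\,|b_i|\bigr)
    \leq \left(\sum_{i=1}^n d_i a_i^2\right)^{\!1/2}\left(\sum_{i=1}^n d_i b_i^2\right)^{\!1/2}.
\end{align*}
Then I would bound each factor: since $0 \leq d_i$, we have $d_i a_i^2 \leq \left(\max_j d_j\right) a_i^2$, but a cleaner route is to note $\sum_i d_i a_i^2 \leq \left(\sum_i d_i\right)\left(\max_i a_i^2\right) \leq \tr(D)\, A^2$ — wait, that overshoots; instead use $\sum_i d_i a_i^2 \le \tr(D)\,\max_i a_i^2$ is still not tight enough. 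The correct simple bound is $\sum_i d_i a_i^2 \leq \tr(D) \norm{a}_2^2 \leq \tr(D) A^2$ only if all $d_i \le \tr(D)$, which does hold since each $d_i \ge 0$; thus $d_i \le \sum_j d_j = \tr(D)$, giving $\sum_i d_i a_i^2 \le \tr(D)\sum_i a_i^2 = \tr(D)\norm{a}_2^2 \le \tr(D) A^2$, and similarly $\sum_i d_i b_i^2 \leq \tr(D) B^2$. Combining, $a^\top D b \leq \sqrt{\tr(D) A^2}\sqrt{\tr(D) B^2} = AB\,\tr(D)$, as claimed.

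There is no real obstacle here; the only point requiring a moment's care is the chain of elementary bounds at the end — one must route through $d_i \leq \tr(D)$ (valid since $D \succeq O$ forces $d_i \ge 0$) rather than a naive application, and keep the square roots matched. An alternative, equally short argument avoids Cauchy--Schwarz entirely: use $|a_i||b_i| \leq \tfrac12(a_i^2 + b_i^2)$, multiply by $d_i \ge 0$, sum, bound $\sum_i d_i a_i^2 \le \tr(D)A^2$ and $\sum_i d_i b_i^2 \le \tr(D)B^2$, and then note the resulting $\tfrac12\tr(D)(A^2+B^2)$ does not match $AB\tr(D)$ — so the Cauchy--Schwarz route is the one that gives exactly the stated constant, and that is the path I would present.
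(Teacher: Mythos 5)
Your argument is correct, but it takes a genuinely different and somewhat longer route than the paper's. The paper simply observes that $\norm{a}_2\leq A$ forces $\abs{a_i}\leq A$ for every component (and likewise $\abs{b_i}\leq B$), so that $a^\top Db=\sum_i a_id_ib_i\leq\sum_i\abs{a_i}\abs{b_i}d_i\leq AB\sum_i d_i=AB\tr(D)$, using only $d_i\geq 0$; no Cauchy--Schwarz is needed. Your weighted Cauchy--Schwarz argument is valid: the bound $\sum_i d_ia_i^2\leq\tr(D)\norm{a}_2^2$ does follow from $d_i\leq\sum_j d_j=\tr(D)$ (which in turn needs $d_i\geq 0$), and the two square-root factors combine to exactly $AB\tr(D)$. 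What the extra machinery buys you is nothing here — the constant is the same — and the componentwise bound $\abs{a_i}\leq\norm{a}_2$ is the one-line observation that makes the direct proof go through; your closing remark that a ``naive application'' must be avoided does not apply to that simpler path. Also, the exploratory asides in your second paragraph (``wait, that overshoots'') and the discussion of the AM--GM variant that fails to give the right constant should be cut from a final write-up; they document your search rather than the proof.
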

\begin{proof}
    From $\norm{a}_2\leq A$ and $\norm{b}_2\leq B$, we have
    $\abs{a_i}\leq A$ and $\abs{b_i}\leq B$ for all $i=1,\ldots,n$.
    Therefore, we obtain
    \begin{align*}
        a^\top Db=\sum_{i=1}^na_id_ib_i\leq
        \sum_{i=1}^n\abs{a_i}\cdot\abs{b_i}d_i
        \leq AB\sum_{i=1}^nd_i\leq AB\tr(D).
    \end{align*}
    This completes the proof.
\end{proof}

\section{Details of numerical experiments}
Tables \ref{tab:mnist-a-c}--\ref{tab:jester-a-d} summarize the initial step size, batch size, hyperparameters, and CPU time (seconds) per iteration used in the numerical experiments. Table \ref{tab:mnist-a-c} (resp. Table \ref{tab:mnist-a-d}) summarizes the details of solving the PCA problem for the MNIST dataset by using the algorithms with constant (resp. diminishing) step sizes. Table \ref{tab:coil-a-c} (resp. Table \ref{tab:coil-a-d}) summarizes the details of solving the PCA problem for the COIL100 dataset by using the algorithms with constant (resp. diminishing) step sizes. Table \ref{tab:ml1m-a-c} (resp. Table \ref{tab:ml1m-a-d}) summarizes the details of solving the LRMC problem for the MovieLens-1M dataset by using the algorithms with constant (resp. diminishing) step sizes. Table \ref{tab:jester-a-c} (resp. Table \ref{tab:jester-a-d}) summarizes the details of solving the LRMC problem for the Jester dataset by using the algorithms with constant (resp. diminishing) step sizes.

\begin{table}[htbp]
\caption{Summary of the initial step size, batch size, CPU time (seconds) per iteration, and hyperparameters used in the case of constant step sizes for the PCA problem on the MNIST dataset.}
\label{tab:mnist-a-c}
\centering
\begin{tabular}{c|ccccccc}
\hline
 & $\alpha$ & $\beta_1$ & $\beta_2$  & $\beta$ & $\epsilon$ & $b$ & CPU time per iteration (seconds) \\ \hline\hline
 RSGD & $10^{-2}$ & - & - & - & - & $2^{10}$ & $1.4\times 10^{-2}$ \\ \hline
 RAdam & $10^{-2}$ & 0.9 & 0.999 & - & $10^{-8}$ & $2^{10}$ & $1.481\times 10^{-2}$ \\ \hline
 RAMSGrad & $10^{-3}$ & 0.9 & 0.999 & - & $10^{-8}$ & $2^{10}$ & $1.563\times 10^{-2}$ \\ \hline
 RASA-L & $10^{-3}$ & - & - & 0.99 & $10^{-8}$ & $2^{10}$ & $2.155\times 10^{-2}$ \\ \hline
 RASA-R & $10^{-3}$ & - & - & 0.99 & $10^{-8}$ & $2^{10}$ & $1.836\times 10^{-2}$ \\ \hline
 RASA-LR & $10^{-3}$ & - & - & 0.99 & $10^{-8}$ & $2^{10}$ & $2.374\times 10^{-2}$ \\ \hline
\end{tabular}
\end{table}

\begin{table}[htbp]
\caption{Summary of the initial step size, batch size, CPU time (seconds) per iteration, and hyperparameters used in the case of diminishing step sizes for the PCA problem on the MNIST dataset.}
\label{tab:mnist-a-d}
\centering
\begin{tabular}{c|ccccccc}
\hline
 & $\alpha$ & $\beta_1$ & $\beta_2$  & $\beta$ & $\epsilon$ & $b$ & CPU time per iteration (seconds) \\ \hline\hline
 RSGD & $10^{-1}$ & - & - & - & - & $2^{10}$ & $1.19\times 10^{-2}$ \\ \hline
 RAdam & $10^{-1}$ & 0.9 & 0.999 & - & $10^{-8}$ & $2^{10}$ & $1.275\times 10^{-2}$ \\ \hline
 RAMSGrad & $10^{-2}$ & 0.9 & 0.999 & - & $10^{-8}$ & $2^{10}$ & $1.28\times 10^{-2}$ \\ \hline
 RASA-L & $10^{-2}$ & - & - & 0.99 & $10^{-8}$ & $2^{10}$ & $1.803\times 10^{-2}$ \\ \hline
 RASA-R & $10^{-2}$ & - & - & 0.99 & $10^{-8}$ & $2^{10}$ & $1.629\times 10^{-2}$ \\ \hline
 RASA-LR & $10^{-2}$ & - & - & 0.99 & $10^{-8}$ & $2^{10}$ & $2.222\times 10^{-2}$ \\ \hline
\end{tabular}
\end{table}

\begin{table}[htbp]
\caption{Summary of the initial step size, batch size, CPU time (seconds) per iteration, and hyperparameters used in the case of constant step sizes for the PCA problem on the COIL100 dataset.}
\label{tab:coil-a-c}
\centering
\begin{tabular}{c|ccccccc}
\hline
 & $\alpha$ & $\beta_1$ & $\beta_2$  & $\beta$ & $\epsilon$ & $b$ & CPU time per iteration (seconds) \\ \hline\hline
 RSGD & $10^{-2}$ & - & - & - & - & $2^{10}$ & $3.937\times 10^{-2}$ \\ \hline
 RAdam & $10^{-2}$ & 0.9 & 0.999 & - & $10^{-8}$ & $2^{10}$ & $5.256\times 10^{-2}$ \\ \hline
 RAMSGrad & $10^{-3}$ & 0.9 & 0.999 & - & $10^{-8}$ & $2^{10}$ & $4.531\times 10^{-2}$ \\ \hline
 RASA-L & $10^{-3}$ & - & - & 0.99 & $10^{-8}$ & $2^{10}$ & $5.697\times 10^{-2}$ \\ \hline
 RASA-R & $10^{-3}$ & - & - & 0.99 & $10^{-8}$ & $2^{10}$ & $5.13\times 10^{-2}$ \\ \hline
 RASA-LR & $10^{-3}$ & - & - & 0.99 & $10^{-8}$ & $2^{10}$ & $5.859\times 10^{-2}$ \\ \hline
\end{tabular}
\end{table}

\begin{table}[htbp]
\caption{Summary of the initial step size, batch size, CPU time (seconds) per iteration, and hyperparameters used in the case of diminishing step sizes for the PCA problem on the COIL100 dataset.}
\label{tab:coil-a-d}
\centering
\begin{tabular}{c|ccccccc}
\hline
 & $\alpha$ & $\beta_1$ & $\beta_2$  & $\beta$ & $\epsilon$ & $b$ & CPU time per iteration (seconds) \\ \hline\hline
 RSGD & $10^{-1}$ & - & - & - & - & $2^{10}$ & $3.638\times 10^{-2}$ \\ \hline
 RAdam & $10^{-2}$ & 0.9 & 0.999 & - & $10^{-8}$ & $2^{10}$ & $5.339\times 10^{-2}$ \\ \hline
 RAMSGrad & $10^{-3}$ & 0.9 & 0.999 & - & $10^{-8}$ & $2^{10}$ & $4.033\times 10^{-2}$ \\ \hline
 RASA-L & $10^{-2}$ & - & - & 0.99 & $10^{-8}$ & $2^{10}$ & $6.805\times 10^{-2}$ \\ \hline
 RASA-R & $10^{-2}$ & - & - & 0.99 & $10^{-8}$ & $2^{10}$ & $5.781\times 10^{-2}$ \\ \hline
 RASA-LR & $10^{-2}$ & - & - & 0.99 & $10^{-8}$ & $2^{10}$ & $6.938\times 10^{-2}$ \\ \hline
\end{tabular}
\end{table}

\begin{table}[htbp]
\caption{Summary of the initial step size, batch size, CPU time (seconds) per iteration, and hyperparameters used in the case of constant step sizes for the LRMC problem on the MovieLens-1M dataset.}
\label{tab:ml1m-a-c}
\centering
\begin{tabular}{c|ccccccc}
\hline
 & $\alpha$ & $\beta_1$ & $\beta_2$  & $\beta$ & $\epsilon$ & $b$ & CPU time per iteration (seconds) \\ \hline\hline
 RSGD & $10^{-3}$ & - & - & - & - & $2^8$ & $2.921\times 10^{-1}$ \\ \hline
 RAdam & $10^{-3}$ & 0.9 & 0.999 & - & $10^{-8}$ & $2^8$ & $2.813\times 10^{-1}$ \\ \hline
 RAMSGrad & $10^{-3}$ & 0.9 & 0.999 & - & $10^{-8}$ & $2^8$ & $2.806\times 10^{-1}$ \\ \hline
 RASA-L & $10^{-3}$ & - & - & 0.99 & $10^{-8}$ & $2^8$ & $3.834\times 10^{-1}$ \\ \hline
 RASA-R & $10^{-4}$ & - & - & 0.99 & $10^{-8}$ & $2^8$ & $8.512\times 10^{-2}$ \\ \hline
 RASA-LR & $10^{-4}$ & - & - & 0.99 & $10^{-8}$ & $2^8$ & $1.285\times 10^{-2}$ \\ \hline
\end{tabular}
\end{table}

\begin{table}[htbp]
\caption{Summary of the initial step size, batch size, CPU time (seconds) per iteration, and hyperparameters used in the case of diminishing step sizes for the LRMC problem on the MovieLens-1M dataset.}
\label{tab:ml1m-a-d}
\centering
\begin{tabular}{c|ccccccc}
\hline
 & $\alpha$ & $\beta_1$ & $\beta_2$  & $\beta$ & $\epsilon$ & $b$ & CPU time per iteration (seconds) \\ \hline\hline
 RSGD & $10^{-3}$ & - & - & - & - & $2^8$ & $1.931\times 10^{-1}$ \\ \hline
 RAdam & $10^{-3}$ & 0.9 & 0.999 & - & $10^{-8}$ & $2^8$ & $1.953\times 10^{-1}$ \\ \hline
 RAMSGrad & $10^{-3}$ & 0.9 & 0.999 & - & $10^{-8}$ & $2^8$ & $2.302\times 10^{-1}$ \\ \hline
 RASA-L & $10^{-3}$ & - & - & 0.99 & $10^{-8}$ & $2^8$ & $3.11\times 10^{-1}$ \\ \hline
 RASA-R & $10^{-4}$ & - & - & 0.99 & $10^{-8}$ & $2^8$ & $1.231\times 10^{-2}$ \\ \hline
 RASA-LR & $10^{-4}$ & - & - & 0.99 & $10^{-8}$ & $2^8$ & $1.86\times 10^{-2}$ \\ \hline
\end{tabular}
\end{table}

\begin{table}[htbp]
\caption{Summary of the initial step size, batch size, CPU time (seconds) per iteration, and hyperparameters used in the case of constant step sizes for the LRMC problem on the Jester dataset.}
\label{tab:jester-a-c}
\centering
\begin{tabular}{c|ccccccc}
\hline
 & $\alpha$ & $\beta_1$ & $\beta_2$  & $\beta$ & $\epsilon$ & $b$ & CPU time per iteration (seconds) \\ \hline\hline
 RSGD & $10^{-3}$ & - & - & - & - & $2^8$ & $6.03\times 10^{-2}$ \\ \hline
 RAdam & $10^{-3}$ & 0.9 & 0.999 & - & $10^{-8}$ & $2^8$ & $5.973\times 10^{-2}$ \\ \hline
 RAMSGrad & $10^{-3}$ & 0.9 & 0.999 & - & $10^{-8}$ & $2^8$ & $5.966\times 10^{-2}$ \\ \hline
 RASA-L & $10^{-3}$ & - & - & 0.99 & $10^{-8}$ & $2^8$ & $5.955\times 10^{-2}$ \\ \hline
 RASA-R & $10^{-3}$ & - & - & 0.99 & $10^{-8}$ & $2^8$ & $5.88\times 10^{-2}$ \\ \hline
 RASA-LR & $10^{-3}$ & - & - & 0.99 & $10^{-8}$ & $2^8$ & $5.958\times 10^{-2}$ \\ \hline
\end{tabular}
\end{table}

\begin{table}[htbp]
\caption{Summary of the initial step size, batch size, CPU time (seconds) per iteration, and hyperparameters used in the case of diminishing step sizes for the LRMC problem on the Jester dataset.}
\label{tab:jester-a-d}
\centering
\begin{tabular}{c|ccccccc}
\hline
 & $\alpha$ & $\beta_1$ & $\beta_2$  & $\beta$ & $\epsilon$ & $b$ & CPU time per iteration (seconds) \\ \hline\hline
 RSGD & $10^{-3}$ & - & - & - & - & $2^8$ & $5.815\times 10^{-2}$ \\ \hline
 RAdam & $10^{-3}$ & 0.9 & 0.999 & - & $10^{-8}$ & $2^8$ & $5.836\times 10^{-2}$ \\ \hline
 RAMSGrad & $10^{-3}$ & 0.9 & 0.999 & - & $10^{-8}$ & $2^8$ & $5.885\times 10^{-2}$ \\ \hline
 RASA-L & $10^{-3}$ & - & - & 0.99 & $10^{-8}$ & $2^8$ & $5.746\times 10^{-2}$ \\ \hline
 RASA-R & $10^{-3}$ & - & - & 0.99 & $10^{-8}$ & $2^8$ & $5.853\times 10^{-2}$ \\ \hline
 RASA-LR & $10^{-3}$ & - & - & 0.99 & $10^{-8}$ & $2^8$ & $5.829\times 10^{-2}$ \\ \hline
\end{tabular}
\end{table}

\section{Extended results on batch size comparisons}\label{apx:batch-comparisons}
This appendix presents a detailed comparison of using different batch sizes across various datasets and experiments. Tables \ref{tab:coil100-b-c}--\ref{tab:jester-b-d} show the performance of the algorithms under varying batch sizes, highlighting the impact on convergence speed and computational cost. The results further support our analyses (Theorems \ref{thm:constant} and \ref{thm:diminishing}) that larger batch sizes lead to faster convergence.
\begin{table}[htbp]
\caption{Iterations and CPU time (seconds) required by each algorithm with a constant step size to reduce the gradient norm below 4 for solving the PCA problem on the COIL100 dataset, across different batch sizes. A ``-'' indicates cases where the algorithm did not reach the threshold within the maximum allowed 1,000 iterations.}
\label{tab:coil100-b-c}
\centering
\begin{tabular}{c|c|cc}
\hline
& batch size $b$ & number of iterations & CPU time (seconds) \\ \hline\hline
\multirow{3}{*}{RSGD} & 256 & - & - \\
& 512 & - & - \\
& 1024 & - & - \\ \hline
\multirow{3}{*}{RAdam} & 256 & - & - \\
& 512 & 19 & 0.178  \\
& 1024 & 14 & 0.159 \\ \hline
\multirow{3}{*}{RAMSGrad} & 256 & 132 & 1.105 \\
& 512 & 23 & 0.220 \\
& 1024 & 14 & 0.158 \\
\hline
\multirow{3}{*}{RASA-L} & 256 & 297 & 3.558 \\
& 512 & 122 & 1.636 \\
& 1024 & 75 & 1.169 \\
\hline
\multirow{3}{*}{RASA-R} & 256 & 53 & 0.490 \\
& 512 & 34 & 0.358 \\
& 1024 & 29 & 0.372 \\
\hline
\multirow{3}{*}{RASA-LR} & 256 & 797 & 9.741 \\
& 512 & 225 & 2.968 \\
& 1024 & 60 & 0.936 \\
\hline
\end{tabular}
\end{table}

\begin{table}[htbp]
\caption{Iterations and CPU time (seconds) required by each algorithm with a diminishing step size to reduce the gradient norm below 4 for solving the PCA problem on the COIL100 dataset across different batch sizes.}
\label{tab:coil100-b-d}
\centering
\begin{tabular}{c|c|cc}
\hline
& batch size $b$ & number of iterations & CPU time (seconds) \\ \hline\hline
\multirow{3}{*}{RSGD} & 256 & 237 & 1.695 \\
& 512 & 171 & 1.526 \\
& 1024 & 132 & 1.562 \\
\hline
\multirow{3}{*}{RAdam} & 256 & 21 & 0.160 \\
& 512 & 17 & 0.153 \\
& 1024 & 16 & 0.179 \\
\hline
\multirow{3}{*}{RAMSGrad} & 256 & 32 & 0.245 \\
& 512 & 23 & 0.207 \\
& 1024 & 20 & 0.247 \\
\hline
\multirow{3}{*}{RASA-L} & 256 & 94 & 1.082 \\
& 512 & 52 & 0.670 \\
& 1024 & 36 & 0.542 \\
\hline
\multirow{3}{*}{RASA-R} & 256 & 19 & 0.168 \\
& 512 & 12 & 0.113 \\
& 1024 & 9 & 0.123 \\
\hline
\multirow{3}{*}{RASA-LR} & 256 & 36 & 0.420 \\
& 512 & 31 & 0.400 \\
& 1024 & 27 & 0.411 \\
\hline
\end{tabular}
\end{table}

\begin{table}[htbp]
\caption{Iterations and CPU time (seconds) required by each algorithm with a constant step size to reduce the gradient norm below 3 for solving the LRMC problem on the MovieLens-1M dataset across different batch sizes. ``-'' indicates cases where the algorithm did not reach the threshold within the maximum allowed (300) iterations.}
\label{tab:ml1m-b-c}
\centering
\begin{tabular}{c|c|cc}
\hline
& batch size $b$ & number of iterations & CPU time (seconds) \\ \hline\hline
\multirow{3}{*}{RSGD} & 64 & 23 & 0.143 \\
& 128 & 20 & 0.249 \\
& 256 & 21 & 0.481 \\ \hline
\multirow{3}{*}{RAdam} & 64 & 45 & 0.292 \\
& 128 & 26 & 0.312 \\
& 256 & 16 & 0.365 \\ \hline
\multirow{3}{*}{RAMSGrad} & 64 & - & - \\
& 128 & - & - \\
& 256 & 155 & 3.717 \\ \hline
\multirow{3}{*}{RASA-L} & 64 & 110 & 4.490 \\
& 128 & 19 & 0.950 \\
& 256 & 8 & 0.447 \\
\hline
\multirow{3}{*}{RASA-R} & 64 & 37 & 0.637 \\
& 128 & 34 & 0.745 \\
& 256 & 34 & 1.106 \\
\hline
\multirow{3}{*}{RASA-LR} & 64 & 19 & 0.723 \\
& 128 & 9 & 0.424 \\
& 256 & 5 & 0.232 \\
\hline
\end{tabular}
\end{table}

\begin{table}[htbp]
\caption{Iterations and CPU time (seconds) required by each algorithm with a diminishing step size to reduce the gradient norm below 3 for solving the LRMC problem on the MovieLens-1M dataset across different batch sizes. ``-'' indicates cases where the algorithm did not reach the threshold within the maximum allowed (300) iterations.}
\label{tab:ml1m-b-d}
\centering
\begin{tabular}{c|c|cc}
\hline
& batch size $b$ & number of iterations & CPU time (seconds) \\ \hline\hline
\multirow{3}{*}{RSGD} & 64 & 108 & 0.691 \\
& 128 & 97 & 1.172 \\
& 256 & 110 & 2.595 \\
\hline
\multirow{3}{*}{RAdam} & 64 & 15 & 0.093 \\
& 128 & 13 & 0.149 \\
& 256 & 11 & 0.238 \\
\hline
\multirow{3}{*}{RAMSGrad} & 64 & 50 & 0.322 \\
& 128 & 32 & 0.385 \\
& 256 & 22 & 0.503 \\
\hline
\multirow{3}{*}{RASA-L} & 64 & 8 & 0.299 \\
& 128 & 7 & 0.305 \\
& 256 & 5 & 0.261 \\
\hline
\multirow{3}{*}{RASA-R} & 64 & - & - \\
& 128 & - & - \\
& 256 & - & - \\
\hline
\multirow{3}{*}{RASA-LR} & 64 & 22 & 0.874 \\
& 128 & 16 & 0.705 \\
& 256 & 13 & 0.688 \\
\hline
\end{tabular}
\end{table}

\begin{table}[htbp]
\caption{Iterations and CPU time (seconds) required by each algorithm with a constant step size to reduce the gradient norm below $1/4$ for solving the LRMC problem on the Jester dataset across different batch sizes. ``-'' indicates cases where the algorithm did not reach the threshold within the maximum allowed (300) iterations.}
\label{tab:jester-b-c}
\centering
\begin{tabular}{c|c|cc}
\hline
& batch size $b$ & number of iterations & CPU time (seconds) \\ \hline\hline
\multirow{3}{*}{RSGD} & 64 & 204 & 0.391 \\
& 128 & 210 & 0.785 \\
& 256 & 204 & 1.500 \\
\hline
\multirow{3}{*}{RAdam} & 64 & 37 & 0.070 \\
& 128 & 41 & 0.150 \\
& 256 & 34 & 0.245 \\
\hline
\multirow{3}{*}{RAMSGrad} & 64 & - & - \\
& 128 & 66 & 0.244 \\
& 256 & 54 & 0.392 \\
\hline
\multirow{3}{*}{RASA-L} & 64 & 32 & 0.062 \\
& 128 & 36 & 0.134 \\
& 256 & 32 & 0.231 \\
\hline
\multirow{3}{*}{RASA-R} & 64 & 38 & 0.072 \\
& 128 & 37 & 0.137 \\
& 256 & 34 & 0.244 \\
\hline
\multirow{3}{*}{RASA-LR} & 64 & 174 & 0.342 \\
& 128 & 48 & 0.184 \\
& 256 & 43 & 0.312 \\
\hline
\end{tabular}
\end{table}

\begin{table}[htbp]
\caption{Iterations and CPU time (seconds) required by each algorithm with a diminishing step size to reduce the gradient norm below $1/2$ for solving the LRMC problem on the Jester dataset across different batch sizes. ``-'' indicates cases where the algorithm did not reach the threshold within the maximum allowed (300) iterations.}
\label{tab:jester-b-d}
\centering
\begin{tabular}{c|c|cc}
\hline
& batch size $b$ & number of iterations & CPU time (seconds) \\ \hline\hline
\multirow{3}{*}{RSGD} & 64 & - & - \\
& 128 & - & - \\
& 256 & - & - \\
\hline
\multirow{3}{*}{RAdam} & 64 & 188 & 0.376 \\
& 128 & 225 & 0.840 \\
& 256 & 167 & 1.232 \\
\hline
\multirow{3}{*}{RAMSGrad} & 64 & 21 & 0.039 \\
& 128 & 18 & 0.064 \\
& 256 & 19 & 0.136 \\
\hline
\multirow{3}{*}{RASA-L} & 64 & 184 & 0.372 \\
& 128 & 249 & 0.943 \\
& 256 & 178 & 1.325 \\
\hline
\multirow{3}{*}{RASA-R} & 64 & 219 & 0.427 \\
& 128 & - & - \\
& 256 & 195 & 1.441 \\
\hline
\multirow{3}{*}{RASA-LR} & 64 & 5 & 0.008 \\
& 128 & 4 & 0.012 \\
& 256 & 3 & 0.015 \\
\hline
\end{tabular}
\end{table}

\section{Conditions under which Assumption \ref{asm:main} \ref{asm:variance} holds}\label{apx:asm-variance}
We introduce the assumptions (Assumptions \ref{asm:sub} \ref{asm:Lipschitz-sub} and \ref{asm:below-sub}) under which Assumption \ref{asm:main} \ref{asm:variance} from Section \ref{sec:asm} holds,
and demonstrate that Assumption \ref{asm:main} \ref{asm:variance} is valid under these assumptions (Lemma \ref{lem:sub-variance}).
Note that Assumptions \ref{asm:sub} \ref{asm:Lipschitz-sub} and \ref{asm:below-sub} are simply the counterparts of Assumptions \ref{asm:main} \ref{asm:Lipschitz} and \ref{asm:below} for $f$,
applied to each $f_i$ $(i=1,\ldots,N)$.

\begin{assumption}\label{asm:sub}
For all $i=1,\ldots,N$,
\begin{enumerate}[label=(B\arabic*)]
\item\label{asm:Lipschitz-sub} There exists a constant $L_i>0$ such that
\begin{align*}
    \abs{\mathrm{D}(f_i\circ R_x)(\eta)[\eta]-\mathrm{D}f_i(x)
    [\eta]}\leq L_i\norm{\eta}_2^2,
\end{align*}
for all $x\in M$, $\eta\in T_xM$.
\item\label{asm:below-sub} $f_i$ is bounded below by $f_\star\in\real$.
\end{enumerate}
\end{assumption}

\begin{lemma}\label{lem:sub-variance}
    Suppose that Assumptions \ref{asm:sub} \ref{asm:Lipschitz-sub} and \ref{asm:below-sub} hold.
    We assume that a random variable $s_{k,i}$
    takes a value from 1 to $N$ following a uniform distribution.
    Then, the sequence $(x_k)_{k=1}^{\infty}\subset M$ generated by 
    Algorithm \ref{alg:general} satisfies
    \begin{align*}
        \ex_k\left[\norm{\grad f_{s_{k,i}}(x_k)-\grad f(x_k)}_2^2\right]
        \leq\frac{2}{N}\sum_{i=1}^NL_iM_i,
    \end{align*}
    where
    \begin{align*}
        M_i:=\sup\lbrace f_i(x_k)-f_\star\mid k\geq 1\rbrace
        \in[0,\infty).
    \end{align*}
\end{lemma}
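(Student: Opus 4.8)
The plan is to exploit the uniform sampling to turn the left-hand side into an average of per-component quantities, and then control each $\norm{\grad f_i(x_k)}_2^2$ by the suboptimality gap $f_i(x_k)-f_\star$ via a Polyak--{\L}ojasiewicz--type inequality that follows from retraction $L$-smoothness of $f_i$. First, since $s_{k,i}$ is uniform on $\{1,\dots,N\}$, one has $\ex_k[\grad f_{s_{k,i}}(x_k)]=\frac{1}{N}\sum_{j=1}^N\grad f_j(x_k)=\grad f(x_k)$, so that
\begin{align*}
\ex_k\left[\norm{\grad f_{s_{k,i}}(x_k)-\grad f(x_k)}_2^2\right]
=\frac{1}{N}\sum_{j=1}^N\norm{\grad f_j(x_k)-\grad f(x_k)}_2^2
\leq\frac{1}{N}\sum_{j=1}^N\norm{\grad f_j(x_k)}_2^2,
\end{align*}
where the inequality is the elementary fact that a variance is at most the corresponding second moment (equivalently, $\frac1N\sum_j\norm{a_j-\bar a}_2^2\le\frac1N\sum_j\norm{a_j}_2^2$ for $\bar a=\frac1N\sum_j a_j$).

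Next I would prove the per-function bound $\norm{\grad f_i(x)}_2^2\leq 2L_i\bigl(f_i(x)-f_\star\bigr)$ for all $x\in M$ and $i=1,\dots,N$. This is where the hypotheses enter: Assumption \ref{asm:sub} \ref{asm:Lipschitz-sub} is precisely the hypothesis of Proposition \ref{prp:ret_Lsmooth} applied to $f_i$ (its proof uses only the retraction-Lipschitz condition and says nothing about the decomposition of $f$), so
\begin{align*}
f_i\bigl(R_x(\eta)\bigr)\leq f_i(x)+\ip{\grad f_i(x)}{\eta}_2+\frac{L_i}{2}\norm{\eta}_2^2
\end{align*}
holds for every $\eta\in T_xM$. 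Choosing the admissible tangent vector $\eta=-L_i^{-1}\grad f_i(x)\in T_xM$ gives $f_i\bigl(R_x(-L_i^{-1}\grad f_i(x))\bigr)\leq f_i(x)-\frac{1}{2L_i}\norm{\grad f_i(x)}_2^2$, and combining this with the lower bound $f_i\bigl(R_x(\cdot)\bigr)\geq f_\star$ from Assumption \ref{asm:sub} \ref{asm:below-sub} and rearranging yields the claimed inequality.

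Finally, evaluating at $x=x_k$ and using $f_i(x_k)-f_\star\leq M_i$ gives $\norm{\grad f_i(x_k)}_2^2\leq 2L_iM_i$ for every $i$, so
\begin{align*}
\ex_k\left[\norm{\grad f_{s_{k,i}}(x_k)-\grad f(x_k)}_2^2\right]
\leq\frac{1}{N}\sum_{j=1}^N 2L_jM_j=\frac{2}{N}\sum_{j=1}^N L_jM_j,
\end{align*}
which is the desired estimate; note $M_i\geq 0$ since $f_i(x_k)\geq f_\star$ for all $k$ by \ref{asm:below-sub}, and if some $M_i=\infty$ the bound holds trivially. The only genuine point requiring care is the transfer of Proposition \ref{prp:ret_Lsmooth} from $f$ to each $f_i$ together with the ``descent along the negative gradient direction'' substitution that produces the gradient-domination inequality; the remaining steps are the standard variance-at-most-second-moment reduction and straightforward arithmetic.
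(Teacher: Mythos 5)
Your proposal is correct and follows essentially the same route as the paper: the same gradient-domination bound $\norm{\grad f_i(x_k)}_2^2\leq 2L_i(f_i(x_k)-f_\star)$ obtained by applying the retraction $L$-smoothness inequality to each $f_i$ with $\eta=-L_i^{-1}\grad f_i(x_k)$ and invoking the lower bound $f_\star$, combined with the same variance-at-most-second-moment reduction of the expectation to $\frac{1}{N}\sum_{i}\norm{\grad f_i(x_k)}_2^2$. The only cosmetic difference is that the paper writes out the expansion $\norm{a-b}_2^2=\norm{a}_2^2-2\ip{a}{b}_2+\norm{b}_2^2$ explicitly where you cite the variance identity directly.
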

\begin{proof}
    From Assumption \ref{asm:main} \ref{asm:below} and Proposition \ref{prp:ret_Lsmooth}, we have
    \begin{align*}
        f_\ast&\leq f_i\left(R_{x_k}\left(-\frac{1}{L_i}\grad f_i(x_k)\right)\right) \\
        &\leq f_i(x_k)+\ip{\grad f_i(x_k)}{-\frac{1}{L_i}\grad f_i(x_k)}_{x_k}
        +\frac{L_i}{2}\norm{-\frac{1}{L_i}\grad f_i(x_k)}_{x_k}^2 \\
        &=f_i(x_k)-\frac{1}{L_i}\norm{\grad f_i(x_k)}_{x_k}^2
        +\frac{1}{2L_i}\norm{\grad f_i(x_k)}_{x_k}^2 \\
        &=f_i(x_k)-\frac{1}{2L_i}\norm{\grad f_i(x_k)}_{x_k}^2.
    \end{align*}
    Therefore, we obtain
    \begin{align}\label{eq:sub-variance}
        \norm{\grad f_i(x_k)}_{x_k}^2\leq 2L_i(f_i(x_k)-f_\star)\leq 2L_iM_i,
    \end{align}
    where
    \begin{align*}
        M_i:=\sup\lbrace f_i(x_k)-f_\star\mid k\geq 1\rbrace
        \in[0,\infty).
    \end{align*}
    From $\norm{a-b}^2_2=\norm{a}^2_2-2\ip{a}{b}_2+\norm{b}_2^2$,
    we obtain
    \begin{align*}
    &\ex_k\left[\norm{\grad f_{s_{k,i}}(x_k)-\grad f(x_k)}^2_2\right] \\
    &=\ex_k\left[\norm{\grad f_{s_{k,i}}(x_k)}^2_2\right]
    -2\ex_k\left[\ip{\grad f_{s_{k,i}}(x_k)}{\grad f(x_k)}_2\right]
    +\ex_k\left[\norm{\grad f(x_k)}^2_2\right] \\
    &=\frac{1}{N}\sum_{i=1}^N\norm{\grad f_i(x_k)}^2_2
    -2\ip{\ex_k[\grad f_{s_{k,i}}(x_k)]}{\grad f(x_k)}_2
    +\norm{\grad f(x_k)}^2_2 \\
    &=\frac{1}{N}\sum_{i=1}^N\norm{\grad f_i(x_k)}^2_2
    -\norm{\grad f(x_k)}^2_2 \\
    &\leq\frac{1}{N}\sum_{i=1}^N\norm{\grad f_i(x_k)}^2_2.
    \end{align*}
    Here, by using \eqref{eq:sub-variance}, it follows that
    \begin{align*}
        \ex_k\left[\norm{\grad f_{s_{k,i}}(x_k)-\grad f(x_k)}^2_2\right]
        \leq \frac{2}{N}\sum_{i=1}^NL_iM_i.
    \end{align*}
\end{proof}
\section{Experimental results on the PCA and LRMC with increasing batch sizes}\label{apx:inc}
\begin{figure}[htbp]
\centering
\subfigure[constant learning rate]{
    \includegraphics[clip, width=0.4\columnwidth]{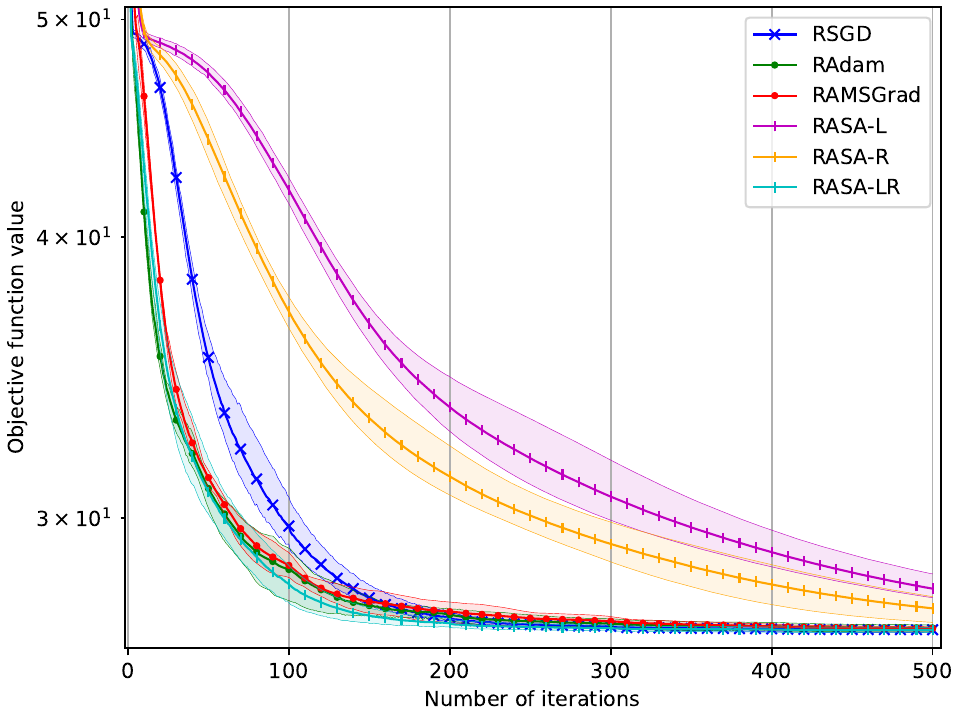}
}    
\subfigure[diminishing learning rate]{
    \includegraphics[clip, width=0.4\columnwidth]{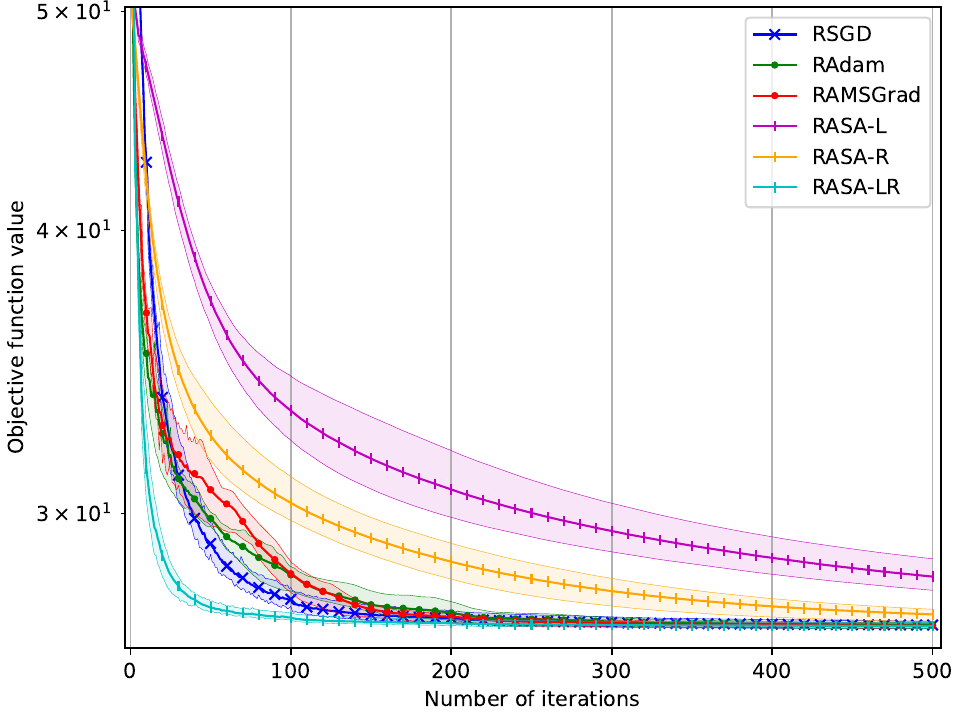}
}
\caption{Objective function value defined by \eqref{eq:pca} versus number of iterations on the training set of the MNIST datasets.}
\end{figure}

\begin{figure}[htbp]
\centering
\subfigure[constant learning rate]{
    \includegraphics[clip, width=0.4\columnwidth]{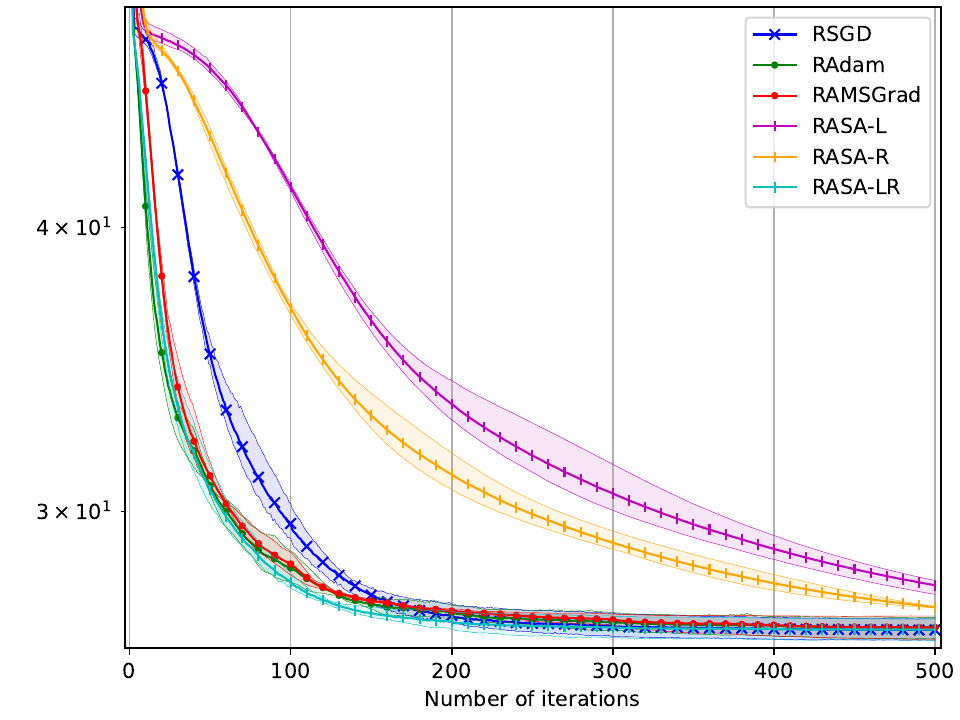}
}    
\subfigure[diminishing learning rate]{
    \includegraphics[clip, width=0.4\columnwidth]{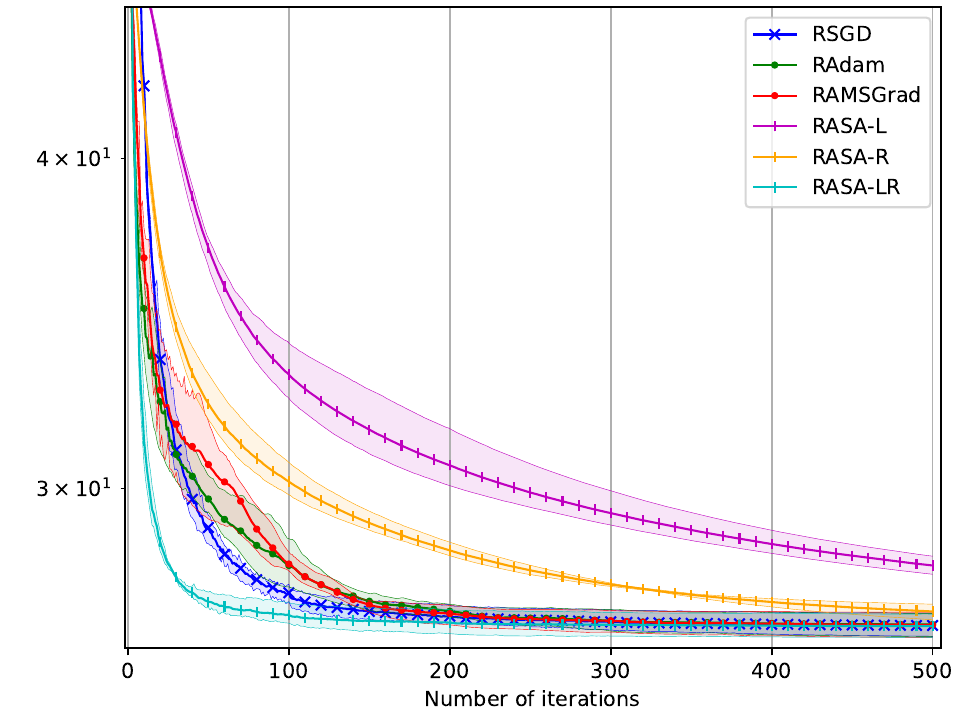}
}
\caption{Objective function value defined by \eqref{eq:pca} versus number of iterations on the test set of the MNIST datasets.}
\end{figure}

\begin{figure}[htbp]
\centering
\subfigure[constant learning rate]{
    \includegraphics[clip, width=0.4\columnwidth]{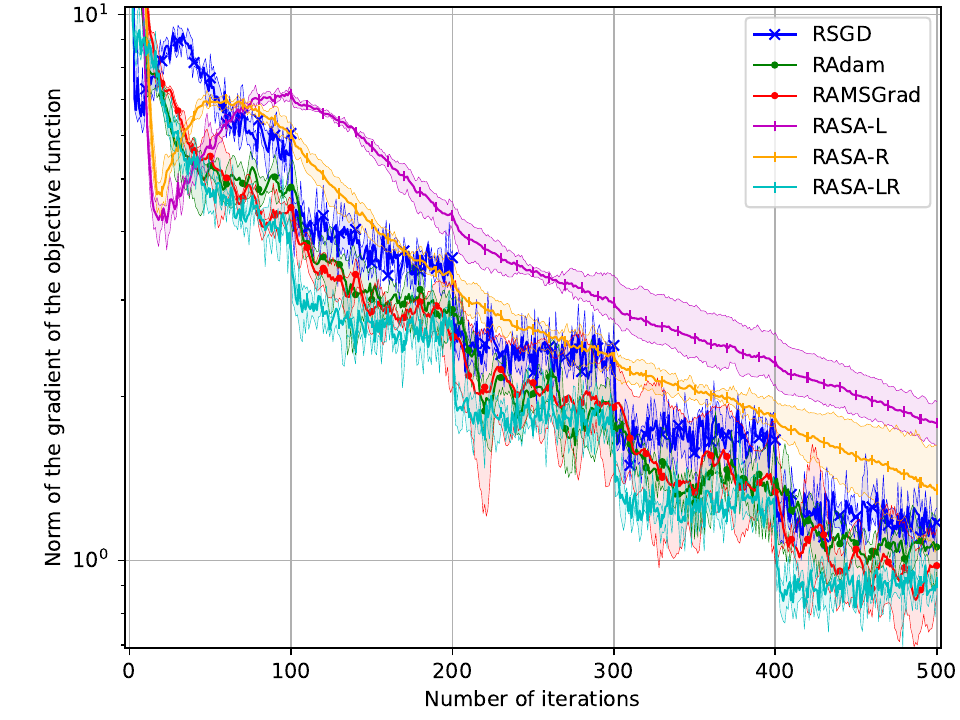}
}    
\subfigure[diminishing learning rate]{
    \includegraphics[clip, width=0.4\columnwidth]{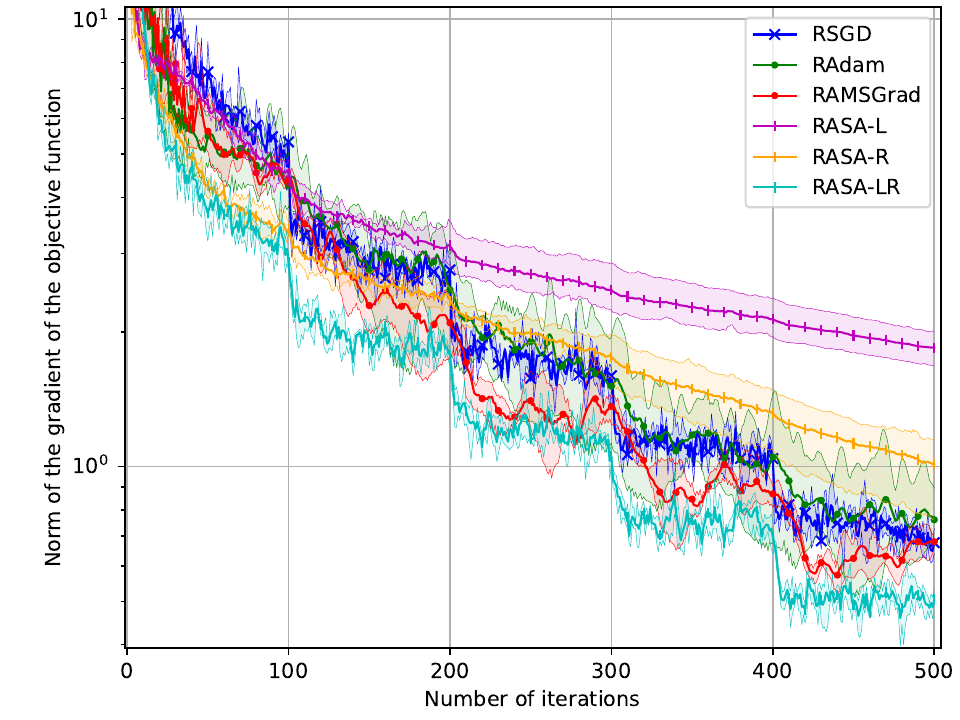}
}
\caption{Norm of the gradient of objective function defined by \eqref{eq:pca} versus number of iterations on the training set of the MNIST datasets.}
\end{figure}

\begin{figure}[htbp]
\centering
\subfigure[constant learning rate]{
    \includegraphics[clip, width=0.4\columnwidth]{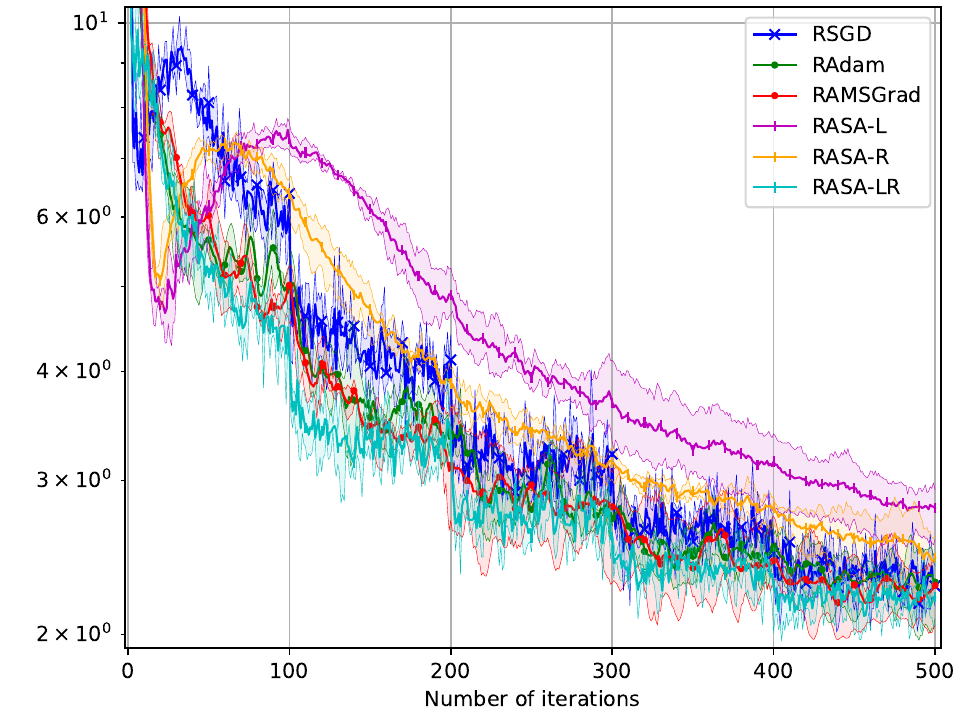}
}    
\subfigure[diminishing learning rate]{
    \includegraphics[clip, width=0.4\columnwidth]{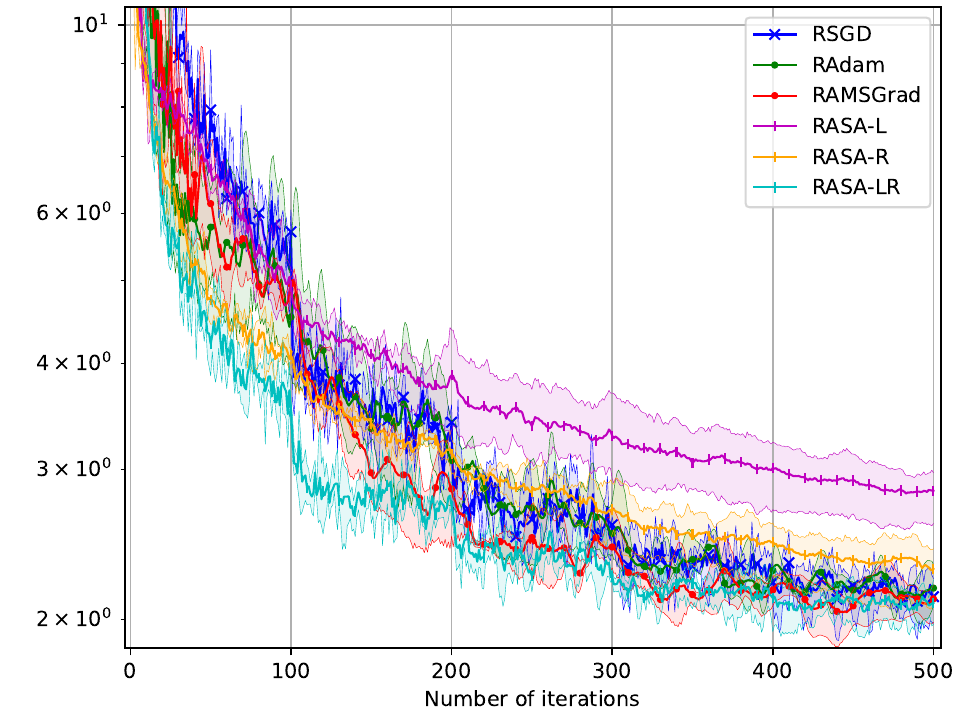}
}
\caption{Norm of the gradient of objective function defined by \eqref{eq:pca} versus number of iterations on the test set of the MNIST datasets.}
\end{figure}

\begin{figure}[htbp]
\centering
\subfigure[constant learning rate]{
    \includegraphics[clip, width=0.4\columnwidth]{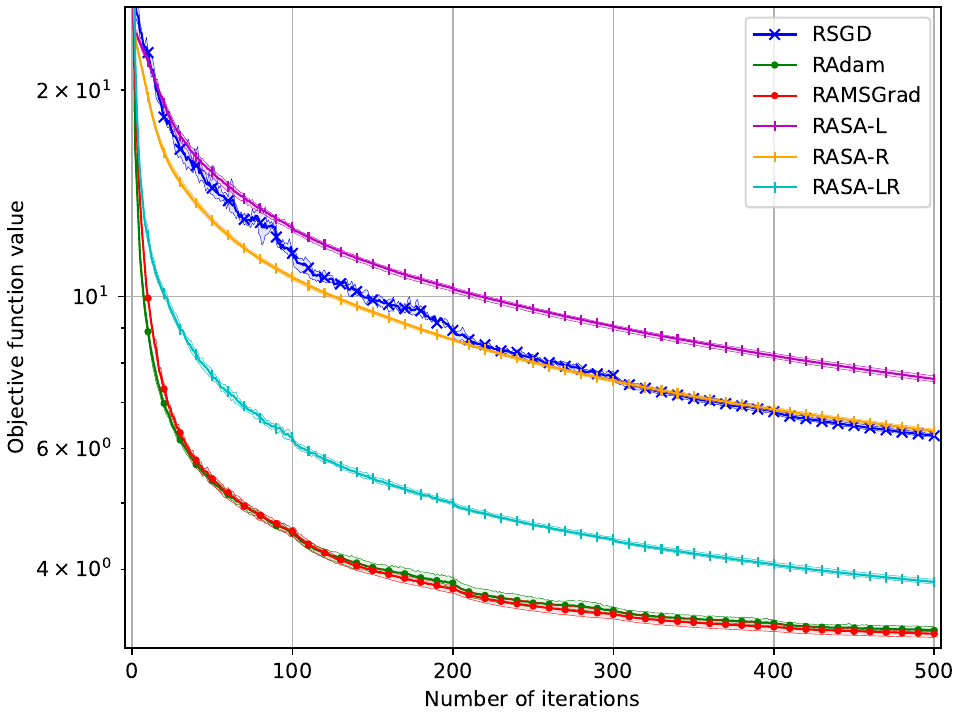}
}    
\subfigure[diminishing learning rate]{
    \includegraphics[clip, width=0.4\columnwidth]{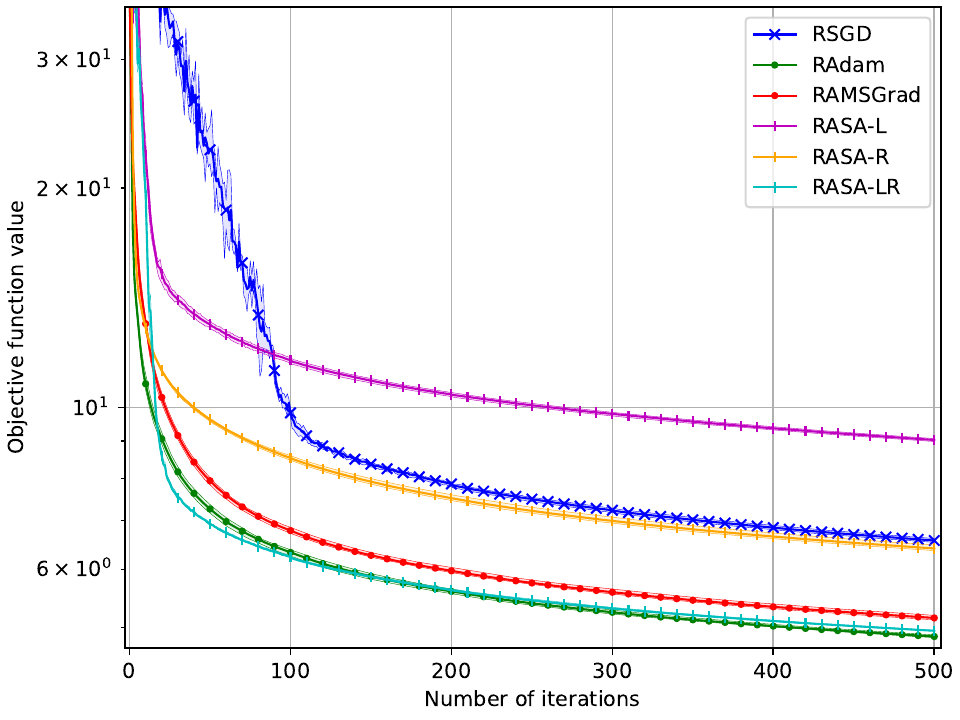}
}
\caption{Objective function value defined by \eqref{eq:pca} versus number of iterations on the training set of the COIL100 datasets.}
\end{figure}

\begin{figure}[htbp]
\centering
\subfigure[constant learning rate]{
    \includegraphics[clip, width=0.4\columnwidth]{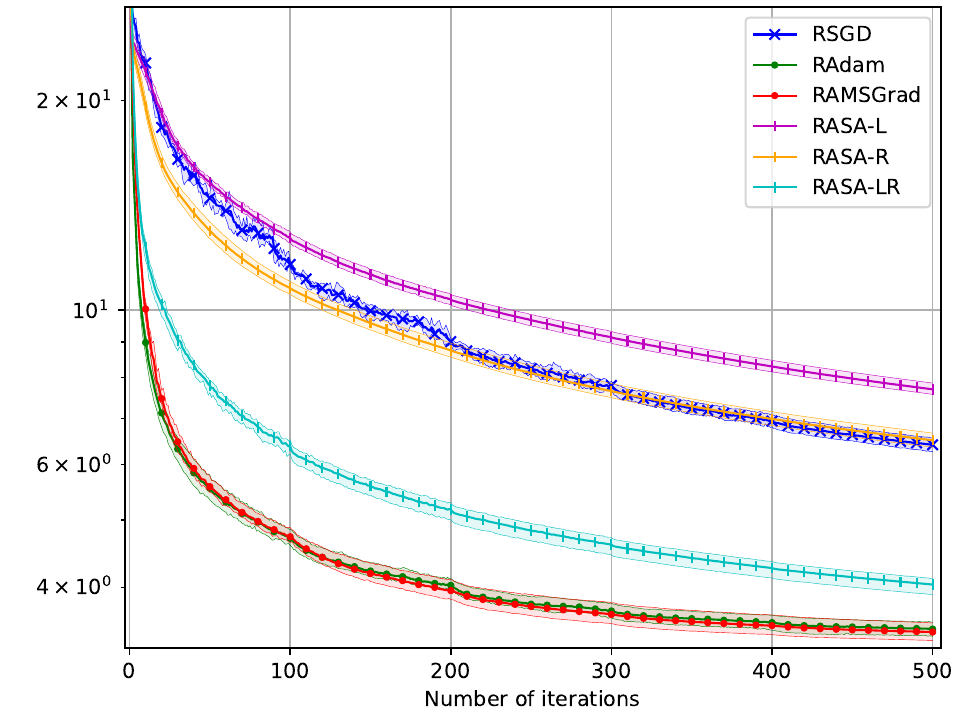}
}    
\subfigure[diminishing learning rate]{
    \includegraphics[clip, width=0.4\columnwidth]{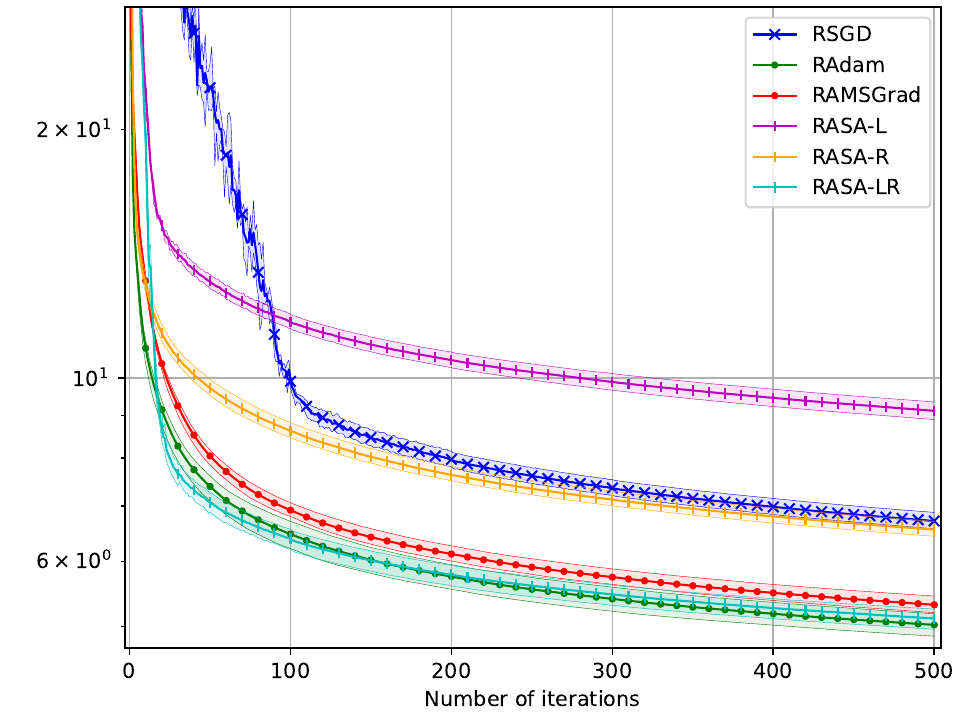}
}
\caption{Objective function value defined by \eqref{eq:pca} versus number of iterations on the test set of the COIL100 datasets.}
\end{figure}

\begin{figure}[htbp]
\centering
\subfigure[constant learning rate]{
    \includegraphics[clip, width=0.4\columnwidth]{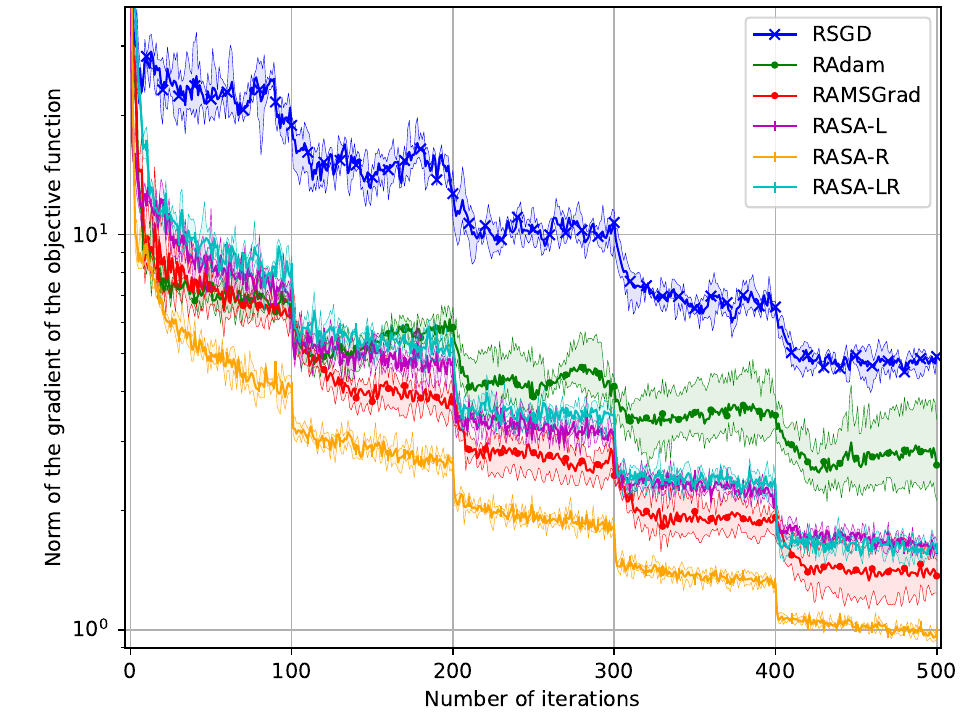}
}    
\subfigure[diminishing learning rate]{
    \includegraphics[clip, width=0.4\columnwidth]{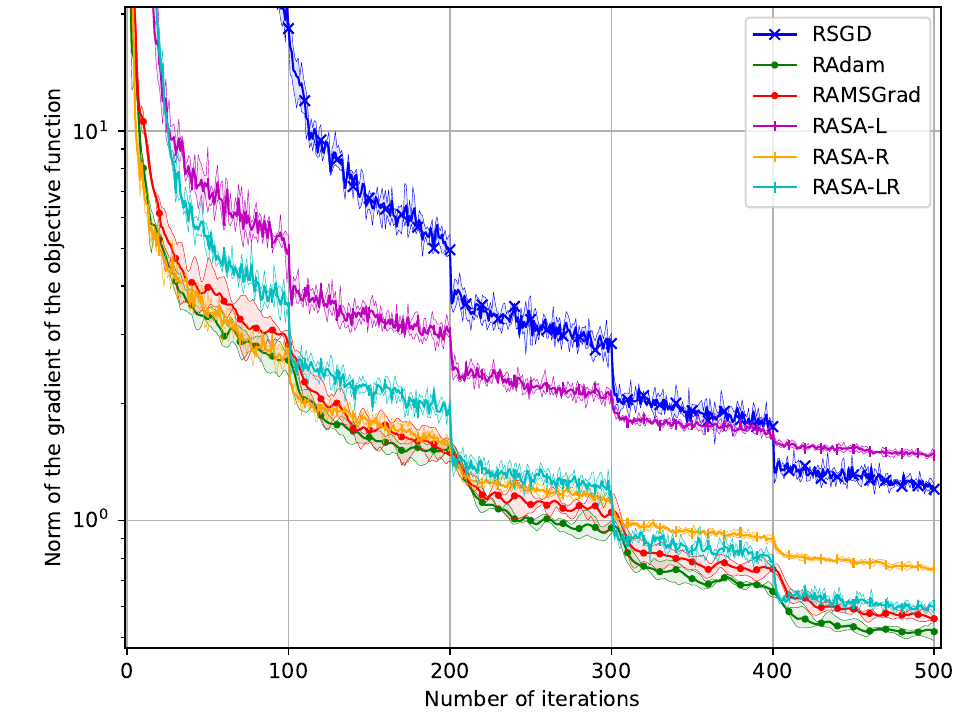}
}
\caption{Norm of the gradient of objective function defined by \eqref{eq:pca} versus number of iterations on the training set of the COIL100 datasets.}
\end{figure}

\begin{figure}[htbp]
\centering
\subfigure[constant learning rate]{
    \includegraphics[clip, width=0.4\columnwidth]{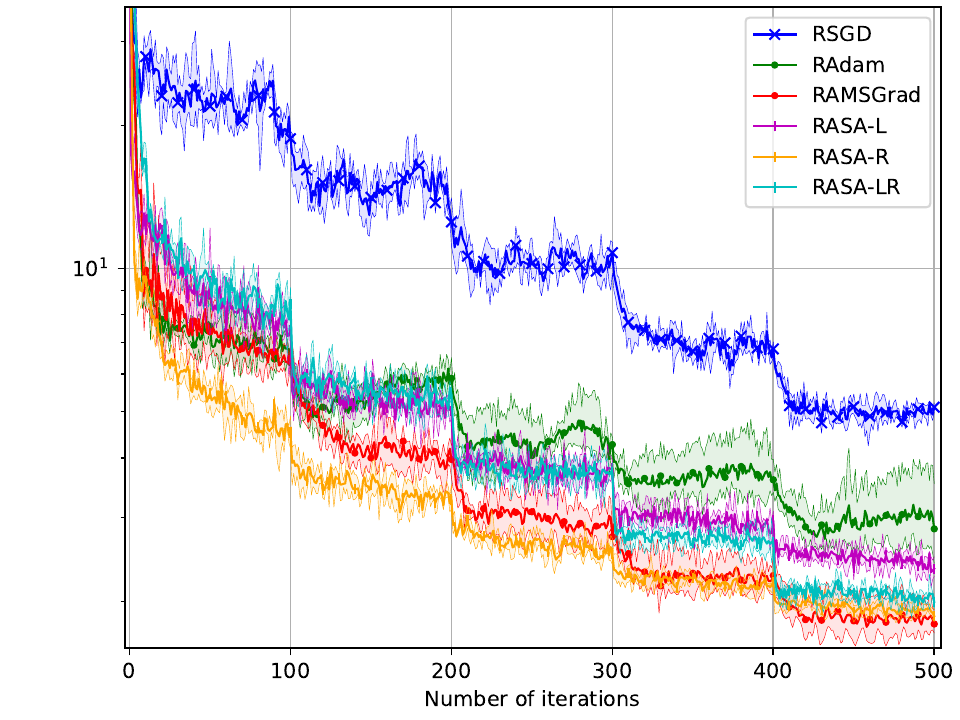}
}    
\subfigure[diminishing learning rate]{
    \includegraphics[clip, width=0.4\columnwidth]{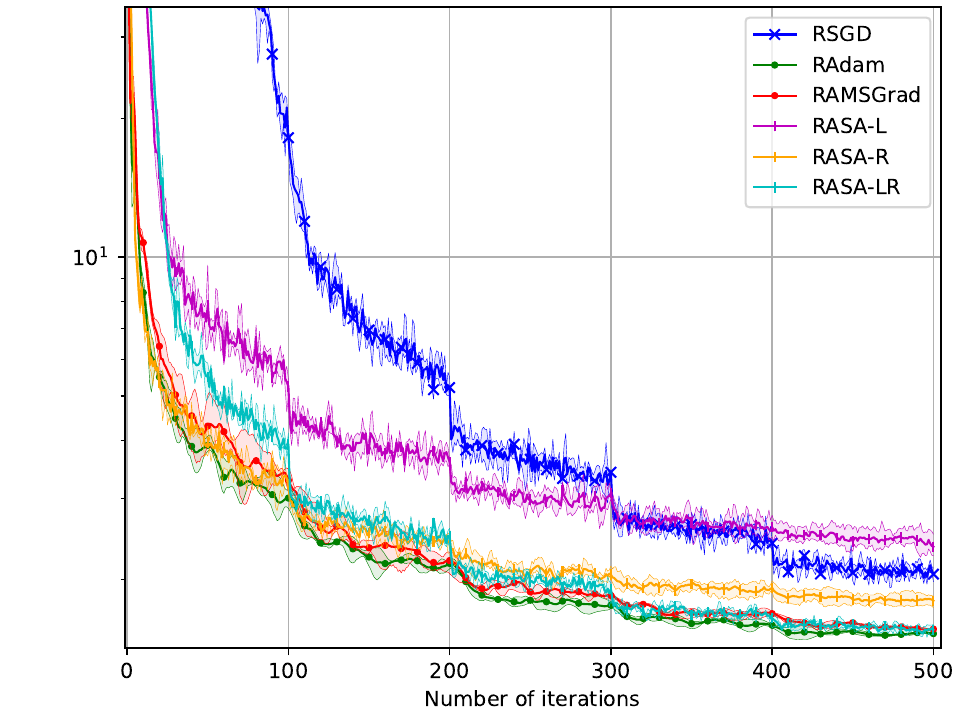}
}
\caption{Norm of the gradient of objective function defined by \eqref{eq:pca} versus number of iterations on the test set of the COIL100 datasets.}
\end{figure}

\begin{figure}[htbp]
\centering
\subfigure[constant learning rate]{
    \includegraphics[clip, width=0.4\columnwidth]{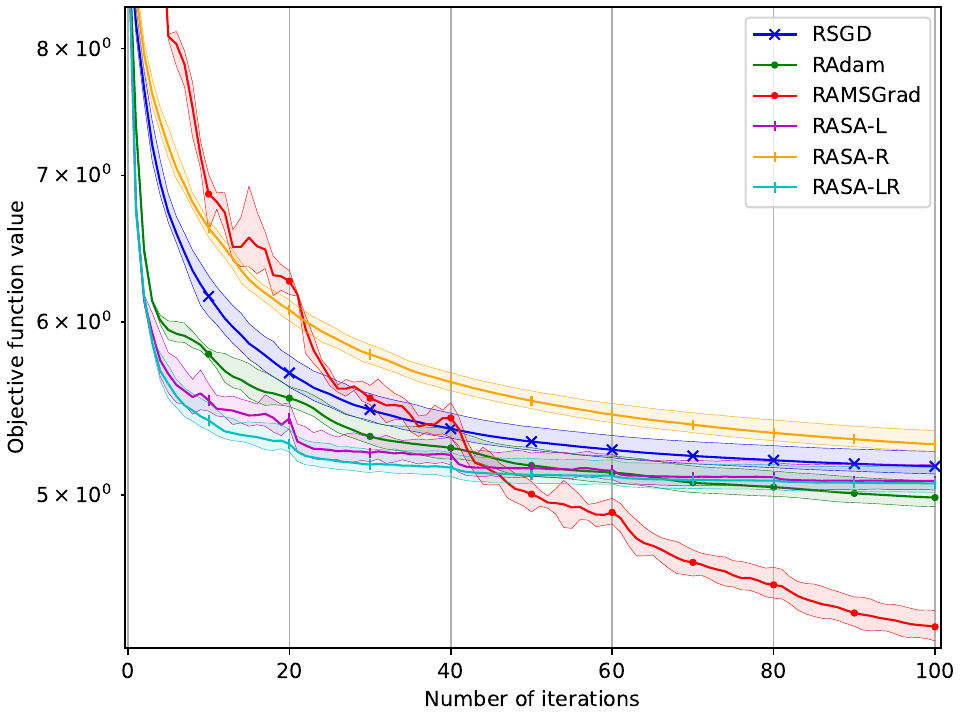}
}    
\subfigure[diminishing learning rate]{
    \includegraphics[clip, width=0.4\columnwidth]{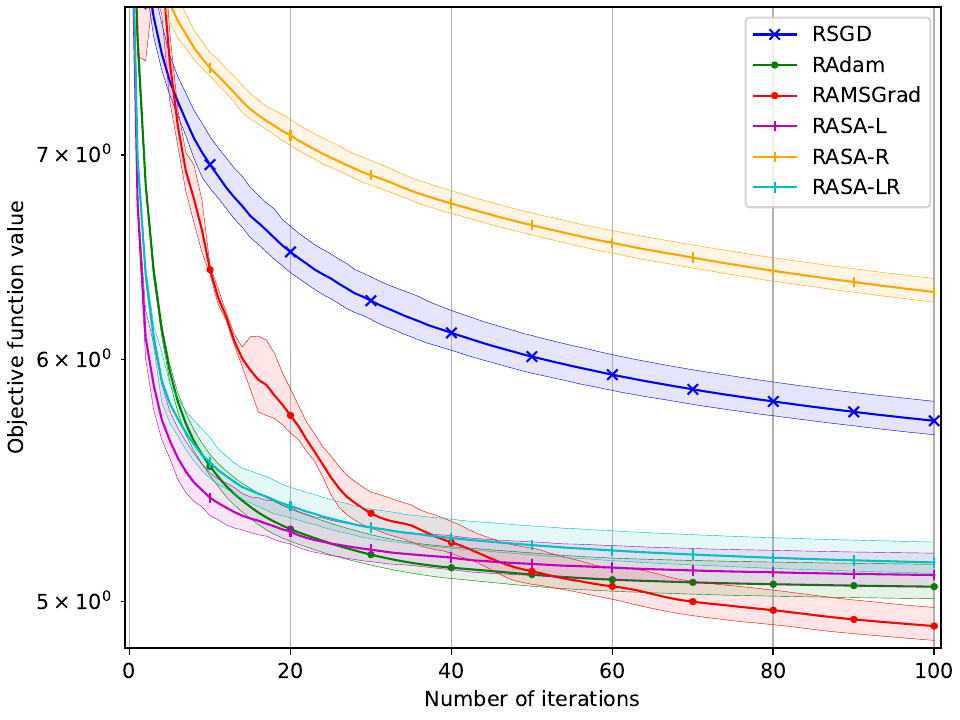}
}
\caption{Objective function value defined by \eqref{eq:lmc} versus number of iterations on the training set of the MovieLens-1M datasets.}
\end{figure}

\begin{figure}[htbp]
\centering
\subfigure[constant learning rate]{
    \includegraphics[clip, width=0.4\columnwidth]{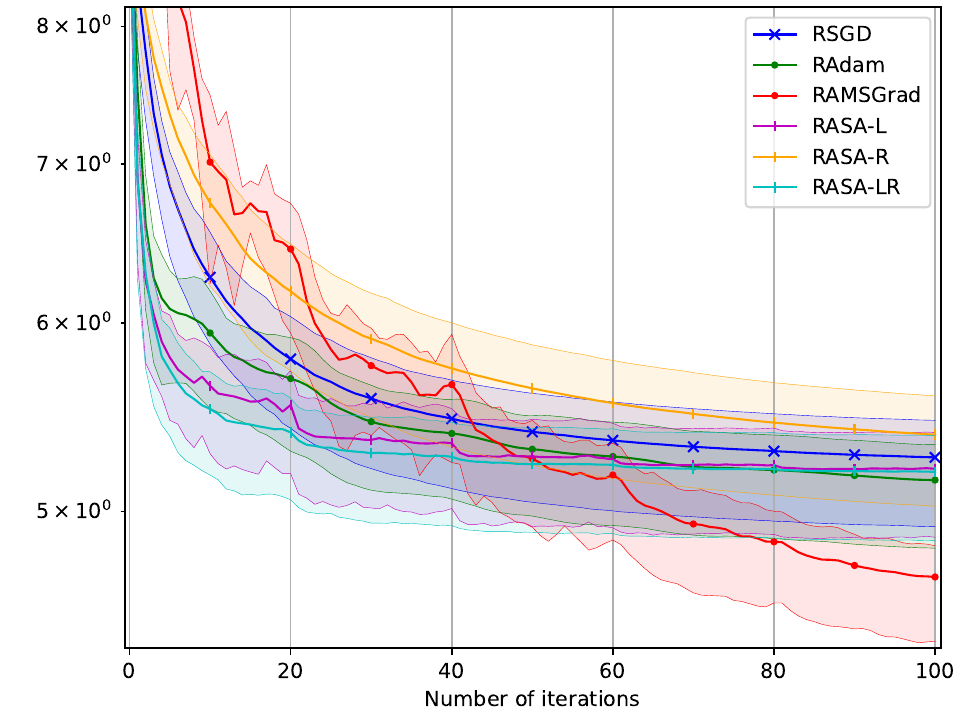}
}    
\subfigure[diminishing learning rate]{
    \includegraphics[clip, width=0.4\columnwidth]{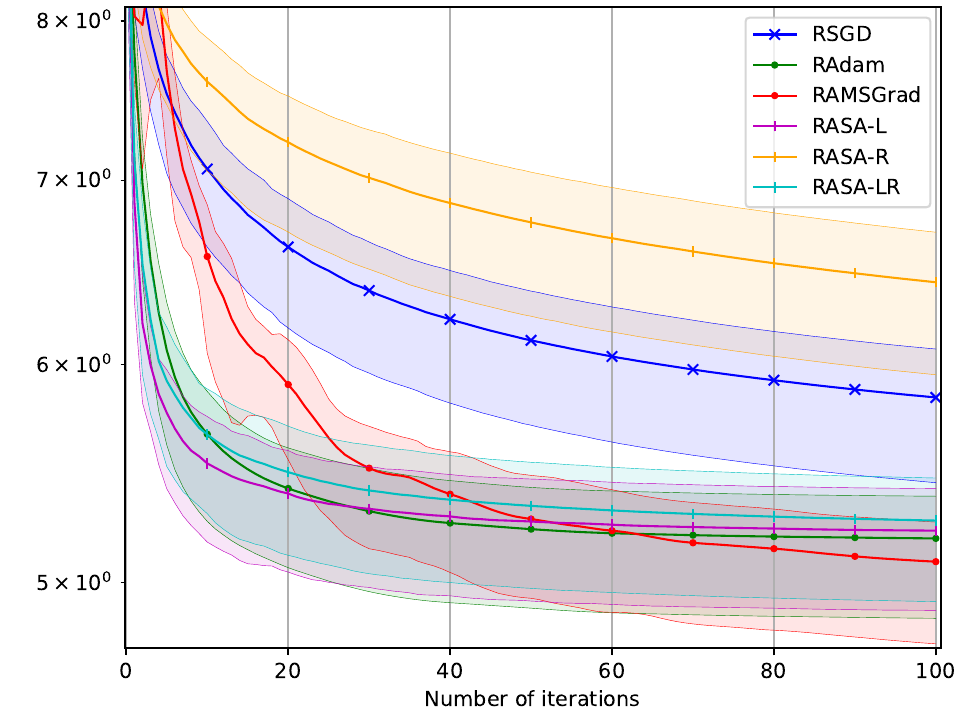}
}
\caption{Objective function value defined by \eqref{eq:lmc} versus number of iterations on the test set of the MovieLens-1M datasets.}
\end{figure}

\begin{figure}[htbp]
\centering
\subfigure[constant learning rate]{
    \includegraphics[clip, width=0.4\columnwidth]{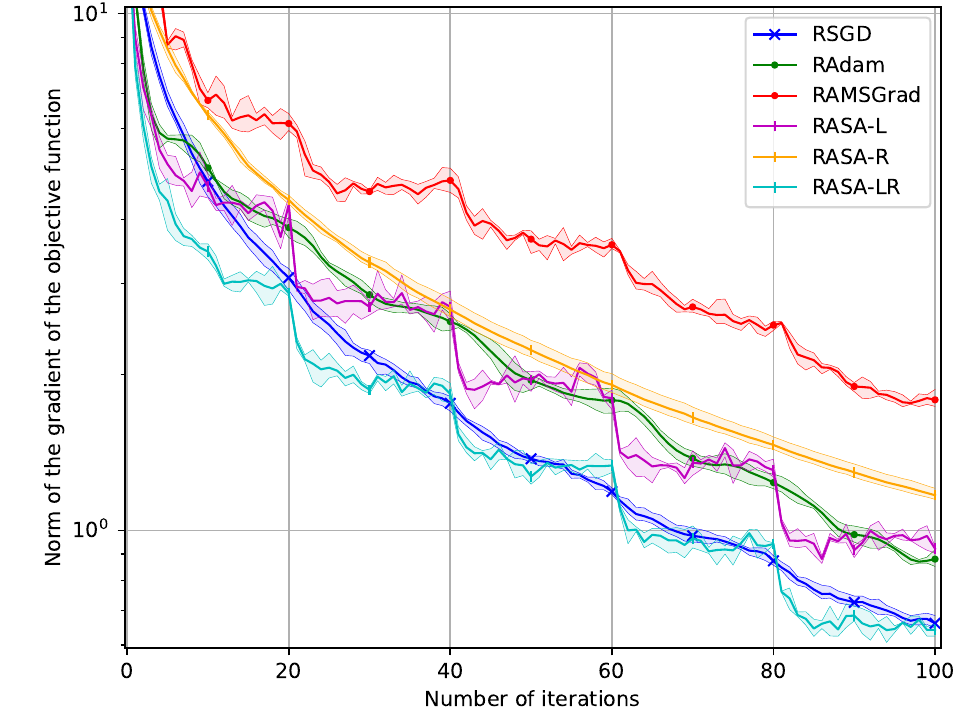}
}    
\subfigure[diminishing learning rate]{
    \includegraphics[clip, width=0.4\columnwidth]{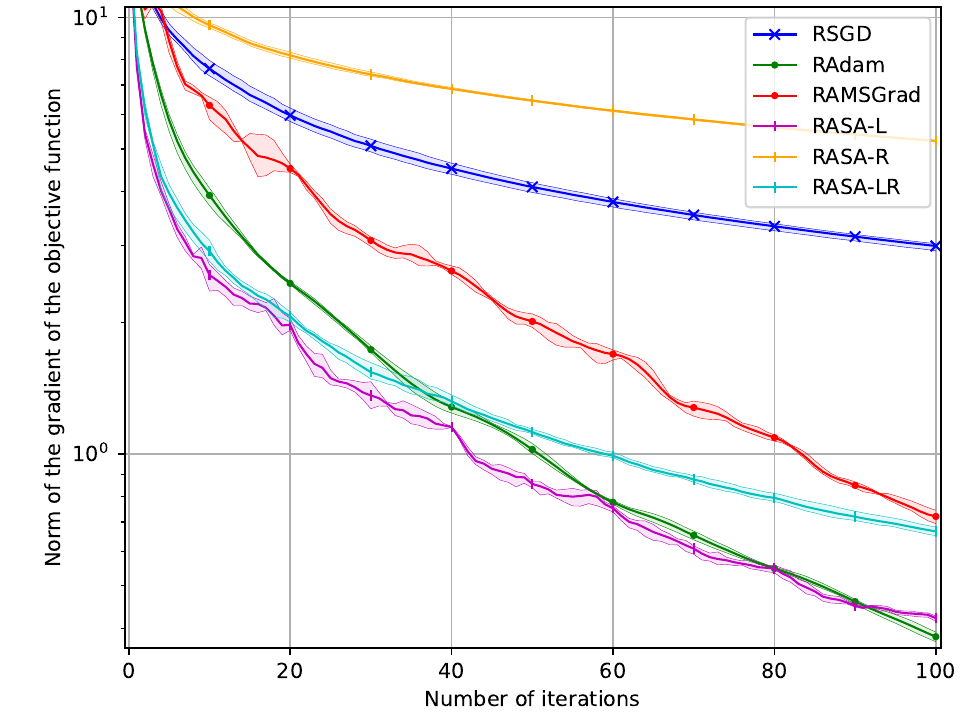}
}
\caption{Norm of the gradient of objective function defined by \eqref{eq:lmc} versus number of iterations on the training set of the MovieLens-1M datasets.}
\end{figure}

\begin{figure}[htbp]
\centering
\subfigure[constant learning rate]{
    \includegraphics[clip, width=0.4\columnwidth]{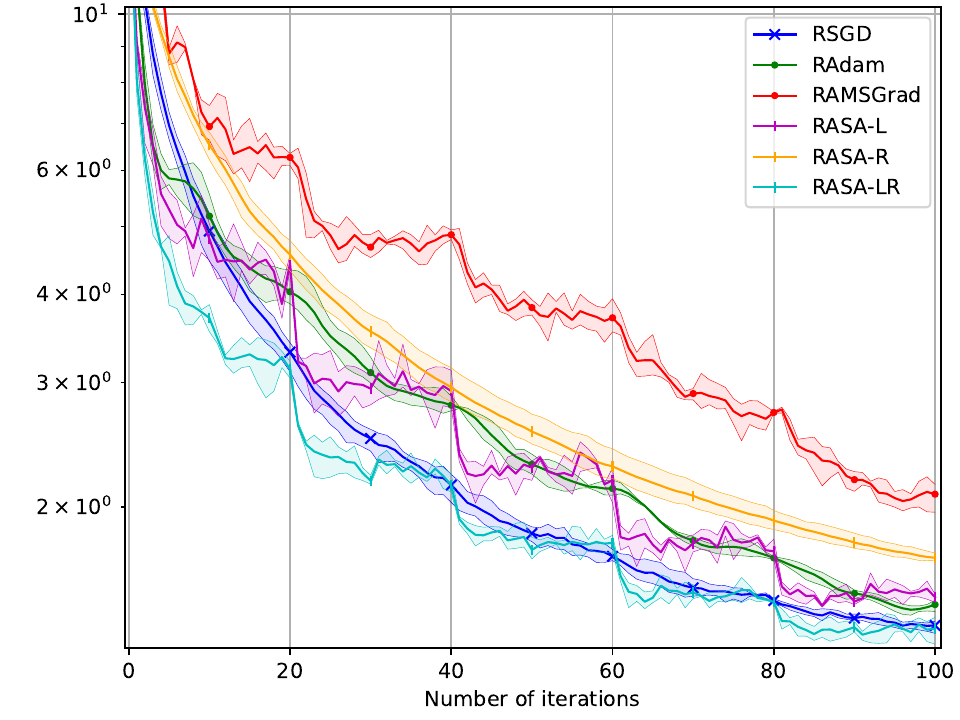}
}    
\subfigure[diminishing learning rate]{
    \includegraphics[clip, width=0.4\columnwidth]{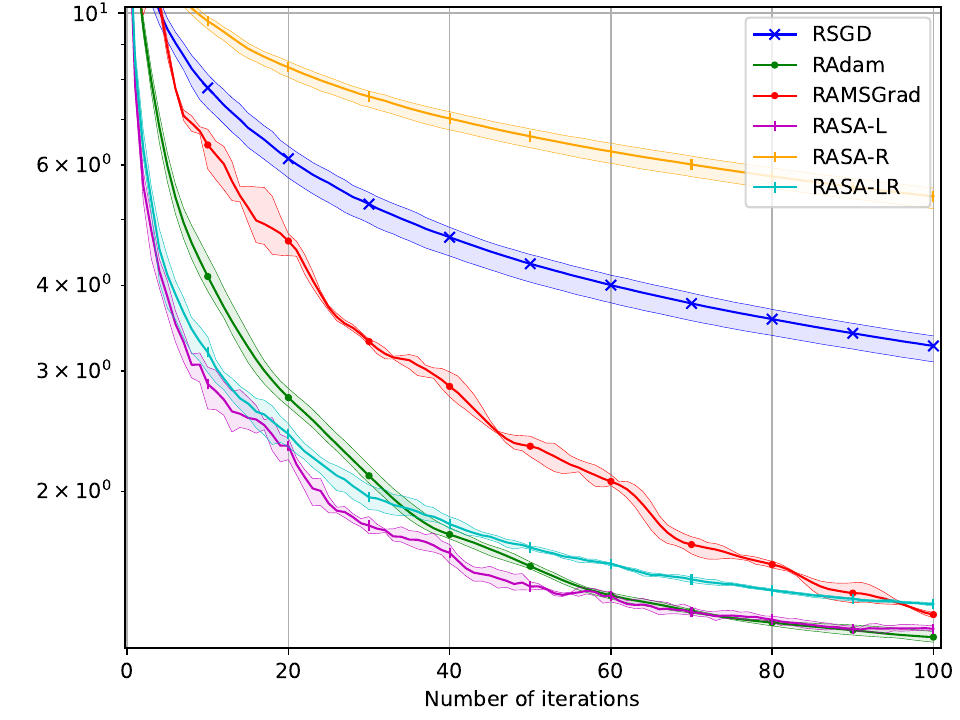}
}
\caption{Norm of the gradient of objective function defined by \eqref{eq:lmc} versus number of iterations on the test set of the MovieLens-1M datasets.}
\end{figure}

\begin{figure}[htbp]
\centering
\subfigure[constant learning rate]{
    \includegraphics[clip, width=0.4\columnwidth]{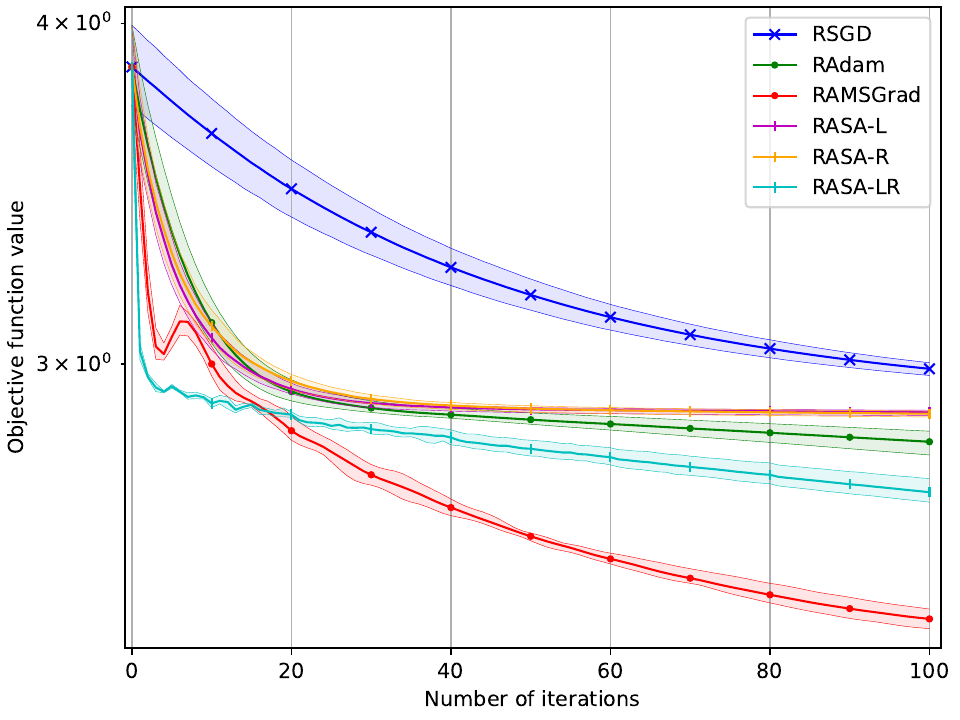}
}    
\subfigure[diminishing learning rate]{
    \includegraphics[clip, width=0.4\columnwidth]{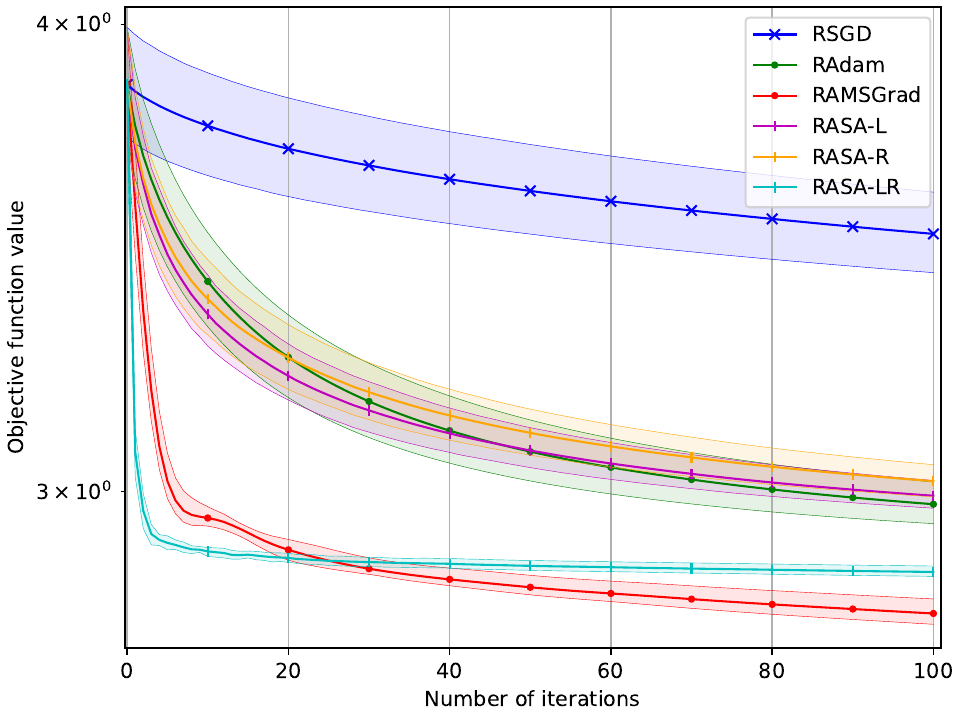}
}
\caption{Objective function value defined by \eqref{eq:lmc} versus number of iterations on the training set of the Jester datasets.}
\end{figure}

\begin{figure}[htbp]
\centering
\subfigure[constant learning rate]{
    \includegraphics[clip, width=0.4\columnwidth]{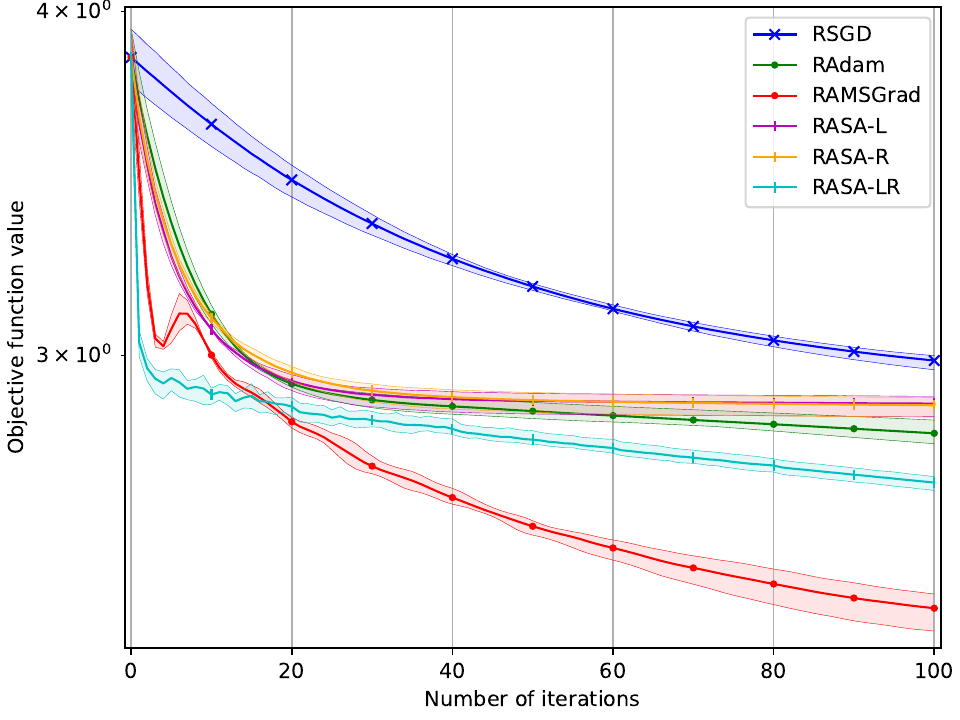}
}    
\subfigure[diminishing learning rate]{
    \includegraphics[clip, width=0.4\columnwidth]{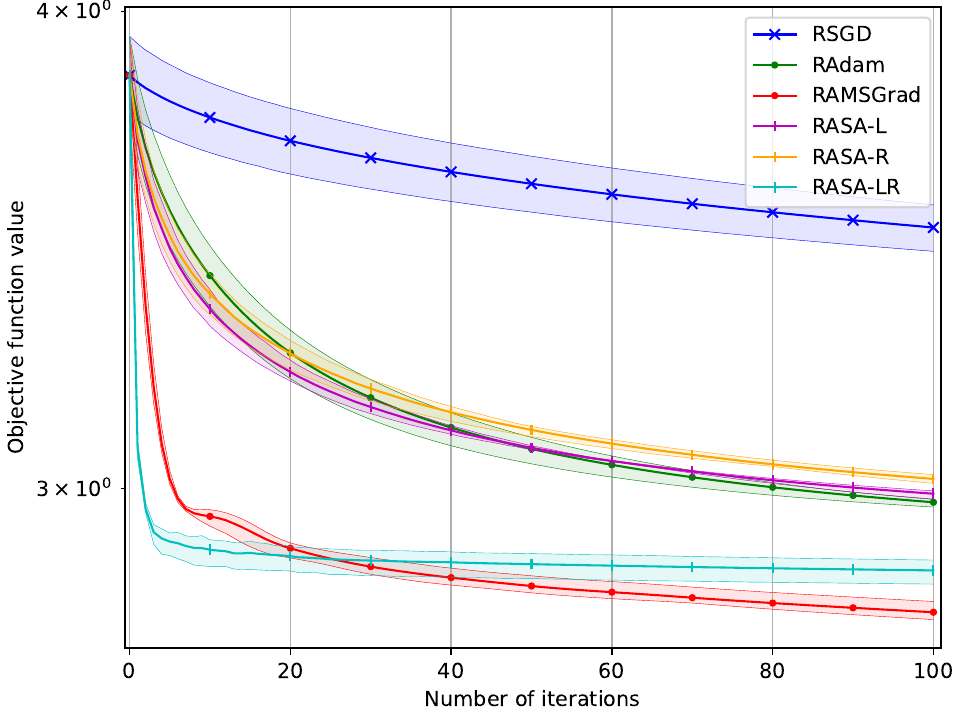}
}
\caption{Objective function value defined by \eqref{eq:lmc} versus number of iterations on the test set of the Jester datasets.}
\end{figure}

\begin{figure}[htbp]
\centering
\subfigure[constant learning rate]{
    \includegraphics[clip, width=0.4\columnwidth]{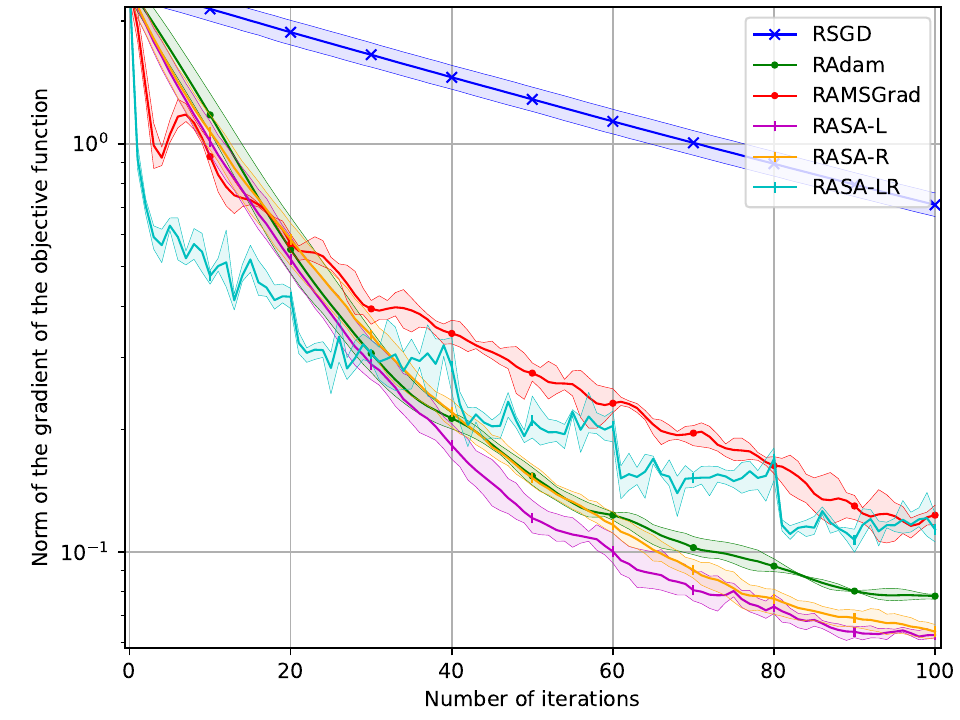}
}    
\subfigure[diminishing learning rate]{
    \includegraphics[clip, width=0.4\columnwidth]{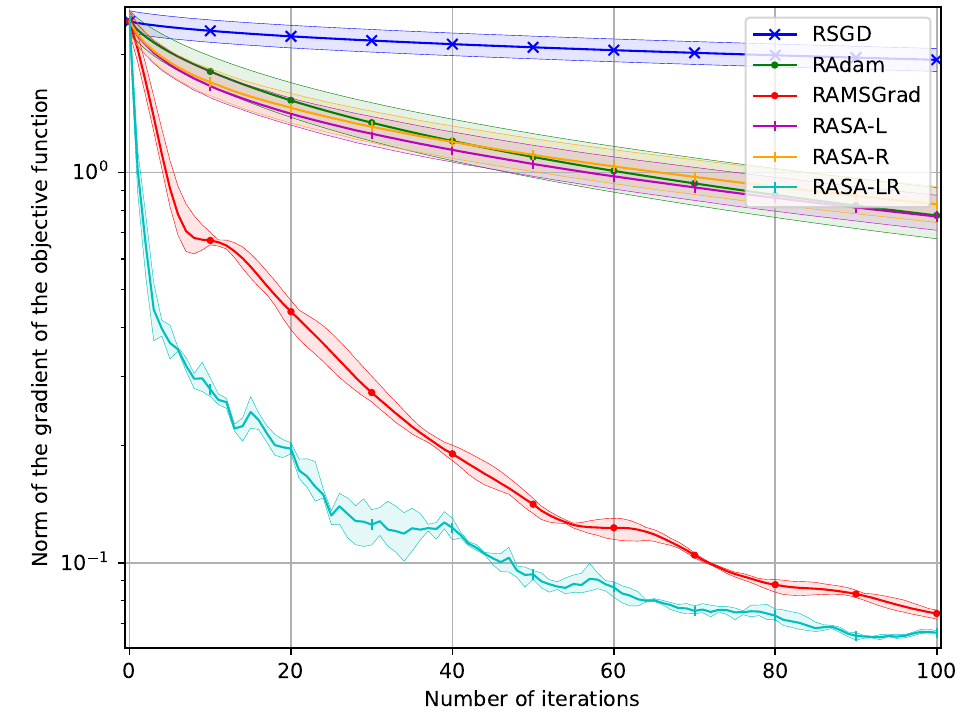}
}
\caption{Norm of the gradient of objective function defined by \eqref{eq:lmc} versus number of iterations on the training set of the Jester datasets.}
\end{figure}

\begin{figure}[htbp]
\centering
\subfigure[constant learning rate]{
    \includegraphics[clip, width=0.4\columnwidth]{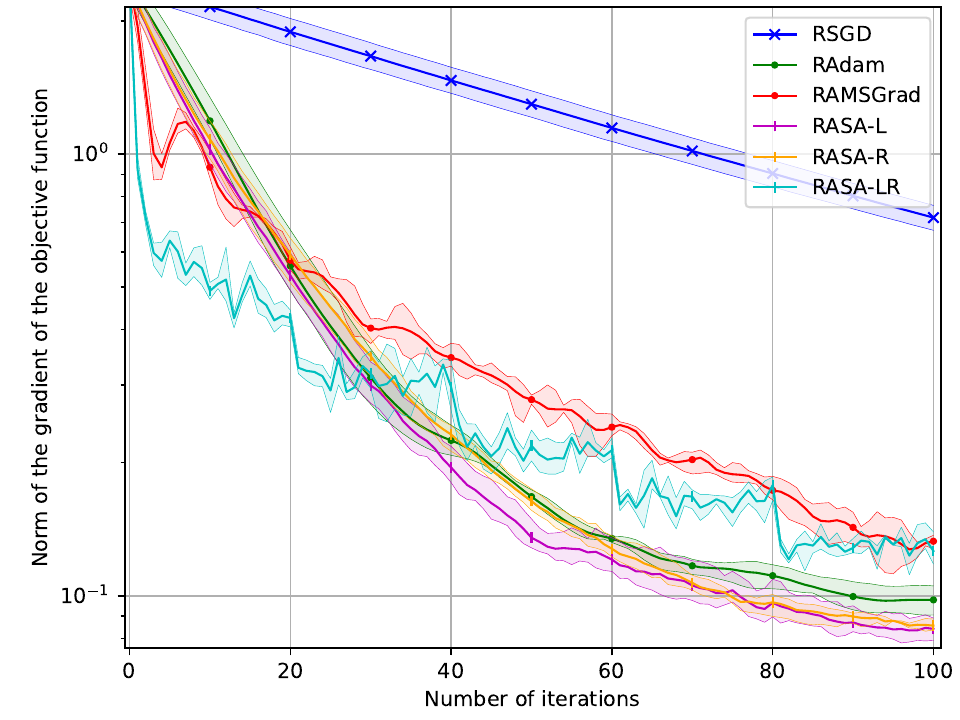}
}    
\subfigure[diminishing learning rate]{
    \includegraphics[clip, width=0.4\columnwidth]{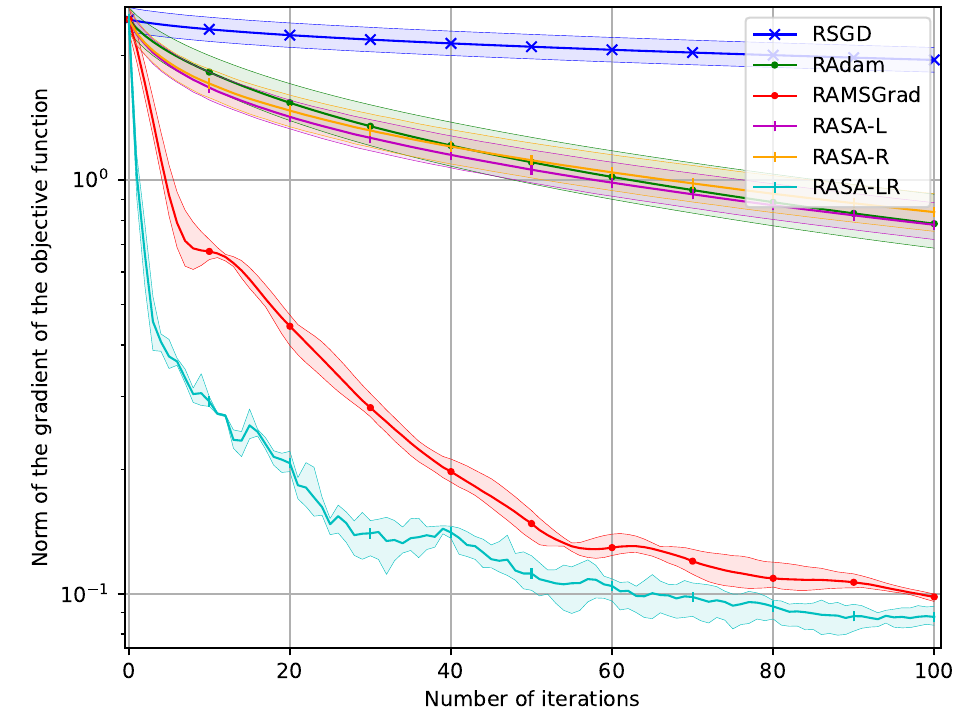}
}
\caption{Norm of the gradient of objective function defined by \eqref{eq:lmc} versus number of iterations on the test set of the Jester datasets.}
\end{figure}

\end{document}